\documentclass[11pt,a4paper]{amsart}
\usepackage[margin=25mm]{geometry}
\usepackage[numbers]{natbib}
\usepackage{hyperref}

\usepackage{amssymb,amsmath}
\usepackage{amsthm}
\usepackage{bbm, bm}
\usepackage{stmaryrd}
\usepackage{color}
\usepackage{graphicx}
\usepackage{caption}
\usepackage{float}
\usepackage{wrapfig}
\usepackage{subcaption}
\usepackage{amsaddr}
\usepackage[mathscr]{eucal}
\usepackage{enumerate}
\usepackage{orcidlink}
\usepackage{lineno}

\usepackage{abstract}
\usepackage{twemojis}

\input xy
\xyoption{all} 

\numberwithin{equation}{section}

\newtheorem{theorem}{Theorem}[section]
\newtheorem{corollary}{Corollary}[section]
\newtheorem{lemma}{Lemma}[section]
\newtheorem{proposition}{Proposition}[section]

\newtheorem{observation}{Observation}[section]
\theoremstyle{definition}
\newtheorem{definition}{Definition}[section]
\newtheorem{example}{Example}[section]

\newenvironment{remark}[1][Remark.]{\begin{trivlist}
\item[\hskip \labelsep {\bfseries #1}]  }{ \end{trivlist}}

\newcommand{\Diffeo}{\mathrel{\raisebox{-.25ex}{$\xrightarrow{\sim}$}}}
\newcommand{\Dleft}{[\hspace{-1.5pt}[}
\newcommand{\Dright}{]\hspace{-1.5pt}]}
\newcommand{\SN}[1]{\Dleft #1 \Dright}

\newcommand{\rmi}{ \textnormal{i}}
\newcommand{\rmd}{\textnormal{d}}

\DeclareMathOperator{\Vect}{Vect}
\DeclareMathOperator{\Rank}{Rank}

\DeclareMathOperator{\Span}{Span}

\DeclareMathOperator{\Der}{Der}

\newcommand{\catname}[1]{\textnormal{\texttt{#1}}}

\font\black=cmbx10 \font\sblack=cmbx7 \font\ssblack=cmbx5 \font\blackital=cmmib10  \skewchar\blackital='177
\font\sblackital=cmmib7 \skewchar\sblackital='177 \font\ssblackital=cmmib5 \skewchar\ssblackital='177
\font\sanss=cmss10 \font\ssanss=cmss8 
\font\sssanss=cmss8 scaled 600 \font\blackboard=msbm10 \font\sblackboard=msbm7 \font\ssblackboard=msbm5
\font\caligr=eusm10 \font\scaligr=eusm7 \font\sscaligr=eusm5  \font\fraktur=eufm10
\font\sfraktur=eufm7 \font\ssfraktur=eufm5 
\font\bsymb=cmsy10 scaled\magstep2
\def\all#1{\setbox0=\hbox{\lower1.5pt\hbox{\bsymb
       \char"38}}\setbox1=\hbox{$_{#1}$} \box0\lower2pt\box1\;}
\def\exi#1{\setbox0=\hbox{\lower1.5pt\hbox{\bsymb \char"39}}
       \setbox1=\hbox{$_{#1}$} \box0\lower2pt\box1\;}

\def\tx#1{{\fam0\relax#1}}

\newfam\bifam
\textfont\bifam=\blackital \scriptfont\bifam=\sblackital \scriptscriptfont\bifam=\ssblackital

\newfam\blfam
\textfont\blfam=\black \scriptfont\blfam=\sblack \scriptscriptfont\blfam=\ssblack

\newfam\bbfam
\textfont\bbfam=\blackboard \scriptfont\bbfam=\sblackboard \scriptscriptfont\bbfam=\ssblackboard

\newfam\ssfam
\textfont\ssfam=\sanss \scriptfont\ssfam=\ssanss \scriptscriptfont\ssfam=\sssanss
\def\sss#1{{\fam\ssfam\relax#1}}

\newfam\clfam
\textfont\clfam=\caligr \scriptfont\clfam=\scaligr \scriptscriptfont\clfam=\sscaligr

\newfam\frfam
\textfont\frfam=\fraktur \scriptfont\frfam=\sfraktur \scriptscriptfont\frfam=\ssfraktur

\def\hpb#1{\setbox0=\hbox{${#1}$}
    \copy0 \kern-\wd0 \kern.2pt \box0}
\def\vpb#1{\setbox0=\hbox{${#1}$}
    \copy0 \kern-\wd0 \raise.08pt \box0}

\def\pmb#1{\setbox0\hbox{${#1}$} \copy0 \kern-\wd0 \kern.2pt \box0}
\def\pmbb#1{\setbox0\hbox{${#1}$} \copy0 \kern-\wd0
      \kern.2pt \copy0 \kern-\wd0 \kern.2pt \box0}
\def\pmbbb#1{\setbox0\hbox{${#1}$} \copy0 \kern-\wd0
      \kern.2pt \copy0 \kern-\wd0 \kern.2pt
    \copy0 \kern-\wd0 \kern.2pt \box0}
\def\pmxb#1{\setbox0\hbox{${#1}$} \copy0 \kern-\wd0
      \kern.2pt \copy0 \kern-\wd0 \kern.2pt
      \copy0 \kern-\wd0 \kern.2pt \copy0 \kern-\wd0 \kern.2pt \box0}
\def\pmxbb#1{\setbox0\hbox{${#1}$} \copy0 \kern-\wd0 \kern.2pt
      \copy0 \kern-\wd0 \kern.2pt
      \copy0 \kern-\wd0 \kern.2pt \copy0 \kern-\wd0 \kern.2pt
      \copy0 \kern-\wd0 \kern.2pt \box0}

\mathchardef\za="710B  
\mathchardef\zb="710C  
\mathchardef\zg="710D  
\mathchardef\zd="710E  
\mathchardef\zve="710F 
\mathchardef\zz="7110  
\mathchardef\zh="7111  
\mathchardef\zvy="7112 
\mathchardef\zi="7113  
\mathchardef\zk="7114  
\mathchardef\zl="7115  
\mathchardef\zm="7116  
\mathchardef\zn="7117  
\mathchardef\zx="7118  
\mathchardef\zp="7119  
\mathchardef\zr="711A  
\mathchardef\zs="711B  
\mathchardef\zt="711C  
\mathchardef\zu="711D  
\mathchardef\zvf="711E 
\mathchardef\zq="711F  
\mathchardef\zc="7120  
\mathchardef\zw="7121  
\mathchardef\ze="7122  
\mathchardef\zy="7123  
\mathchardef\zf="7124  
\mathchardef\zvr="7125 
\mathchardef\zvs="7126 
\mathchardef\zf="7127  
\mathchardef\zG="7000  
\mathchardef\zD="7001  
\mathchardef\zY="7002  
\mathchardef\zL="7003  
\mathchardef\zX="7004  
\mathchardef\zP="7005  
\mathchardef\zS="7006  
\mathchardef\zU="7007  
\mathchardef\zF="7008  
\mathchardef\zW="700A  
\mathchardef\zC="7009  

\newcommand{\be}{\begin{equation}}
\newcommand{\ee}{\end{equation}}

\newcommand{\bea}{\begin{eqnarray}}
\newcommand{\eea}{\end{eqnarray}}
\def\*{{\textstyle *}}
\newcommand{\R}{{\mathbb R}}

\newcommand{\s}{{\textstyle *}}

\def\ul{\underline}
\def\Sec{\sss{Sec}}

\def\Vect{\sss{Vect}}

\def\sH{{\sss H}}

\def\sT{{\sss T}}

\def\sV{{\sss V}}

\def\xi{\tx{i}}

\def\s*{{\scriptstyle *}}

\def\cO{\mathcal{O}}

\def\ul{\underline}

\newcommand{\beas}{\begin{eqnarray*}}
\newcommand{\eeas}{\end{eqnarray*}}

\title{Go Fishing with Alice: Compatible Carrollian and Poisson Structures} 
\author{Andrew James Bruce \,\orcidlink{0000-0001-8197-2263} } 
   \address{Independent Researcher, Cardiff, United Kingdom }  
   \email{andrewjamesbruce@googlemail.com}

   \date{\today}
 
\begin{document}
 \maketitle
\vspace{-20pt}
\begin{abstract}{\noindent We introduce, motivated by noncommutative Carrollian geometry, the notion of a Carrollian--Poisson bundle understood as a Carrollian manifold with a compatible Poisson structure. Compatibility is expressed as the inclusion of the Carroll distribution in the image of the Poisson anchor.  We explore the geometric consequences of this compatibility and present several simple examples of Carrollian--Poisson bundles, including the (outer) Kerr horizon, the BTZ horizon, and future null infinity. Furthermore, we establish that every Carrollian--Poisson bundle can be equipped with a Vaisman (contravariant) connection that is both  metric-compatible and clock-compatible, and thus geometric kinematics can be defined and studied.}\\
\noindent {\Small \textbf{Keywords:} Carrollian--Poisson Bundles;~Carrollian Geometry}\\
\noindent {\small \textbf{MSC 2020:} \emph{Primary:}~53Z05~\emph{Secondary:}~53C12;~53D17;~83D05
}
\end{abstract}
\medskip
\begin{flushright}
\noindent \emph{``No wise fish would go anywhere without a porpoise.''}\\
\textsc{Lewis Carroll}, \textit{Alice's Adventures in Wonderland} (1865) 
\end{flushright}
\tableofcontents
\section{Introduction}
On the smallest scales, it is expected that spacetime will deviate radically from its classical description as a pseudo-Riemannian manifold. Indeed, in 1916, Einstein argued that quantum effects must modify general relativity.  Various approaches to quantum gravity, notably string theory, loop quantum gravity/spin foams, and non-perturbative approaches based on asymptotic safety, all strongly suggest the noncommutative nature of spacetime. For an introduction to noncommutative geometry, the reader may consult Connes \cite{Connes:1994} and/or Madore \cite{Madore:1999}. For a review of quantum gravity, the reader may consult Bambi (ed.) et al. \cite{Bambi:2024}. Manifolds with both Poisson and (pseudo-)Riemannian structures have been studied as a bridge between Einsteinian gravity and the noncommutative structure expected in quantum gravity.  Loosely, the Poisson bracket is seen as the residue of the full quantum nature of spacetime, and this is especially clear when viewed through the lens of deformation quantisation.  In the literature, various compatibility conditions between the metric and Poisson bivector have been proposed; we direct the reader to Beggs \& Majid \cite{Beggs:2017}, Boucetta \cite{Boucetta:2001,Boucetta:2003,Boucetta:2004}, Hawkins \cite{Hawkins:2004}, Martínez \& Vargas \cite{Martínez:2025}, and Kuntner \&  Steinacker \cite{Kunter:2012}, for example.  As metrics and Poisson structures are very different objects, compatibility is commonly a differential condition, such as the Poisson structure being covariantly constant with respect to the Levi-Civita connection of the metric.  \par 
The original works on what is now known as \emph{Carrollian physics}, i.e.,  physics in the ultra-relativistic limit $c\rightarrow 0$, are those of Lévy-Leblond \cite{Lévy-Leblond:1965} and  Sen Gupta \cite{SenGupta:1966}.  For a review of some of the modern facets of Carrollian physics, the reader may consult Bagchi et al. \cite{Bagchi:2025}. The study of \emph{Carrollian manifolds}, so the intrinsic approach to the geometry of the ultra-relativistic limit was initiated by  Henneaux \cite{Henneaux:1979}, and further expounded upon by  Duval et al. \cite{Duval:2014a,Duval:2014b,Duval:2014c}. A Carrollian manifold is a smooth manifold equipped with a degenerate metric whose kernel is rank-$1$. The kernel is usually assumed to have a chosen generator known as the Carroll vector field. A large part of the renewed interest in such manifolds is their role in flat space holography:  the boundary theory lives on future null infinity $\mathscr{I}^+ \cong \mathbb{R} \times S^{d-2}$, which comes with a Carrollian structure, see  \cite{Bagchi:2024,Ruzziconi:2026} for example, and references therein.  Carrollian manifolds may be reformulated, under some physically reasonable conditions,  in terms of principal $\R$-bundles\footnote{Here the principal action is the additive action. Principal $\R$-bundles are equivalent to bundles of affine values (AV-bundles) and defined and applied to `gauge-independent' mechanics by Grabowska et al. \cite{Grabowska:2004}.} as highlighted by  Ciambelli et.al. \cite{Ciambelli:2019}, and it is this framework we will employ in this paper. The key advantages of taking a principal bundle approach are that one has a very clear mathematical understanding of clock forms and one can employ language familiar from gauge theory. Our standard references for principal bundle are Kolář et al. \cite[Chapter III]{Kolář:1993} and Tu \cite[Chapter 6]{Tu:2017}. \par 
In this paper, we explore a potential route to effective geometric backgrounds expected to  come from quantum Carrollian gravity (for the classical case see \cite{March:2025}).  Interestingly, there have been very few works on noncommutative Carrollian geometry, see \cite{Ballesteros:2020,Ballesteros:2020b,Ballesteros:2023,Bose:2025,Bruce:2026,Trześniewski:2024}. With noncommutative geometry in mind, we formulate the mathematical theory of Carrollian manifolds equipped with a compatible Poisson structure, which we interpret as encoding the classical limit of the underlying noncommutative geometry. The question is what compatibility condition(s) can be imposed. \par 
One potential route to `quantum/noncommutative Carrollian geometry' is to select a classical Carrollian manifold and deform it. That is, one can look for a noncommutative algebra that in some appropriate sense approximates the commutative algebra of smooth functions on a Carrollian manifold.   At a minimum, a `noncommutative Carrollian space'  is a noncommutative algebra $\mathcal{A}$ equipped with a `Carroll derivation' $\widehat{\kappa}\in\Der(\mathcal{A})$ and a metric $\widehat{g} : \Der(\mathcal{A}) \otimes_\mathcal{A} \Der(\mathcal{A}) \rightarrow \mathcal{A}$, such that $\ker(\widehat{g}) = \Span\{ \widehat{\kappa}\}$; a variation of this approach was presented in \cite{Bruce:2026}. Assuming the algebra has a consistent quasi-classical limit,  the correspondence principle states\footnote{Here $\hbar$ is a formal deformation parameter and not necessarily the reduced Planck's constant.}
$$[f_1, f_2]_* = \rmi \hbar \{f_1, f_2 \} + \cO(\hbar^2)\,.$$
Thus, to first-order, we have a Poisson structure.  We insist that $\widehat{\kappa}$ is an inner derivation, so that the `flow' in the `null direction' is defined by the algebra and is not an `external property'. This is motivated by the philosophy of general covariance in quantum gravity, where we require time as a property of the algebra.   Thus,
$$\widehat{\kappa}(f) = \kappa(f) + \cO(\hbar) = - \frac{1}{\rmi \hbar} [h, f]_* =  \{h, f\} + \cO(\hbar) = P^\#(\rmd h)(f) + \cO(\hbar)\,,$$
for some (not necessarily unique) Hamiltonian $h \in \mathcal{A}$. Here $P^\#$ is the Poisson anchor associated with the underlying classical Poisson structure.  In the other direction, by equipping a Carrollian manifold with a Poisson structure, subject to a compatibility condition, it may be possible to apply geometric quantisation and construct a series of algebras that in a strong sense approximate the smooth functions on a Carrollian manifold. In a weaker sense, Kontsevich \cite{Kontsevich:2003} has shown that any Poisson manifold admits a formal deformation, generalising earlier results of Fedosov \cite{Fedosov:1994}.   \par 
With the above comments in mind, we define a \emph{Carrollian--Poisson bundle} as a principal $\R$-bundle $\pi : M \rightarrow \Sigma$, equipped with a degenerate metric $g$, whose kernel coincides with the vertical vector fields, and a Poisson structure $P$,  such that the compatibility condition $\ker_m(g) \subset \textrm{Im}(P_m^\#)$ hold for all points $m \in M$ such that the linear map $P^\#_m : \sT_m^* M \rightarrow \sT_m M$ is of non-zero rank. We remark that the compatibility condition is trivially satisfied if the Poisson structure is symplectic, i.e., the Poisson anchor is an isomorphism.  However, most physically relevant Carrollian manifolds are three-dimensional, and so cannot admit symplectic structures.\par   
The compatibility condition ensures that  for every $u \in \ker_m(g) \subset \sT_m M$, there exists at least one $\alpha \in \sT^*_m M$ such that $P^\#(\alpha) = u$,  at points $m \in M$ at which the rank of $P^\#_m$ is non-zero. That is, away from zero points, flows along the null/fibre direction are ``locally Hamiltonian'' in this slightly generalised sense, i.e., they are generated by the Poisson structure and particular local  one-forms.  The compatibility condition, stated algebraically, is equivalent to the geometric requirement that the Carroll distribution (i.e., the null direction) must be tangent to the  non-zero dimensional leaves of the symplectic foliation.\par 
From the point of view of principal bundles, it is natural to insist that the principal $\R$-action is a Poisson map, or in other words that the Poisson structure is equivariant. Infinitesimally, this is just the condition that $\mathcal{L}_\kappa P =0$. While this condition is vital in ensuring that $\kappa$ is locally Hamiltonian in the standard sense, at least away from the zero points,  it is unimportant for the general geometry. Thus, we will not impose that the Poisson structures be equivariant; this is seen as an additional, if natural, constraint. \par 
We define a \emph{strong Carrollian--Poisson bundle} as a Carrollian--Poisson bundle equipped with a contravariant connection, also known as a Vaisman connection, that is metric-compatible and clock-compatible. We view strong Carrollian--Poisson bundles as  potential geometric models of a low-energy effective  background coming from a Carrollian quantum theory.  By using Vaisman connections, we define contravariant geodesics understood as the trajectories of test particles.  We remark that such compatible Vaisman connections are not unique  and generally, non-canonical. \par  
We stress that our approach is intrinsic in the sense that we do not insist that the Carrollian--Poisson structures are necessarily inherited from a Riemannian--Poisson manifold (or similar) as an ultra-relativistic limit nor as a null hypersurface. Alongside various mathematical statements, we present simple examples throughout the paper, and most notably, we show that the  (outer) Horizon of a Kerr black hole and (future) null infinity are naturally  Carrollian--Poisson bundles. Thus, while there are simple mathematical examples, physically relevant examples exist.  
\medskip 

\noindent{\textbf{Key Results:}}
A Carrollian--Poisson bundle is a quadruple $(M, g, \tau, P)$, where $M$ is a principal $\R$-bundle, $g$ is a degenerate metric, $\tau$  is a clock form (an Ehresmann connection) and $P$ is a Poisson structure  (see Definition \ref{def:CarrPoisBund}). Alongside other results, we establish the following. 
\begin{description}
\item[Theorem \ref{the:CarrCotLieAlg}] If the Poisson structure is regular, then there is a Lie subalgebroid $A \subset \sT^*M$ referred to as the Carrollian cotangent Lie algebroid, whose sections are one-forms such that $P^\#(\alpha) \in \ker(g)$.
\item[Proposition \ref{prop:GlobDecom}] Poisson structures on Carrollian--Poisson bundles admit a global decomposition $P = \kappa \wedge E_\mathsf{H} + P_\mathsf{H}$, where $\kappa$ is the fundamental (Carrollian) vector field, and $E_\mathsf{H}$ and $P_\mathsf{H}$ are purely horizontal as defined by the clock form. If $P_\sH=0$, we speak of minimal structures.
\item[Theorem \ref{trm:LocalHam}] If the Poisson structure is equivariant, i.e., independent of time, then the fundamental vector field $\kappa \in \ker(g)$ is locally Hamiltonian on the non-zero dimensional symplectic leaves. 
\item[Theorem \ref{the:MetClocCon}] Metric-compatible and clock-compatible Vaisman (contravariant) connections exists on all Carrollian--Poisson bundles, albeit non-canonically. 
\item[Proposition \ref{prop:IniConMin}] On strong minimal Carrollian--Poisson bundles,  contravariant geodesics that are confined to the fibres of $M$ can be formulated as an initial condition problem. 
\end{description}
\medskip

\noindent{\textbf{Organisation of the paper:}} In Subsection \ref{subsec:Rec}, we briefly recall the facets of Carrollian and Poisson geometry as needed in later subsections.  The main definitions and first examples are presented in Subsection \ref{subsec:ComCarrPois}.  In Subsection \ref{subsec:GeoCarPois}, the geometry and algebra of Carrollian--Poisson bundles is explored. Further examples of Carrollian--Poisson bundles are presented in Subsection \ref{subsec:MoreExamples}. In Subsection \ref{subsec:CompContConn}, we explore contravariant or Vaisman connections that are both metric-compatible and clock-compatible; specifically, we establish that such connections always exist.  With the existence of compatible Vaisman connections, in Subsection \ref{subsec:GeoKin} we make initial study of geometric kinematics via contravariant geodesics. We end with Section \ref{sec:ConMark} with concluding remarks.  
\medskip

\noindent{\textbf{Conventions:}} All manifolds will be real, finite-dimensional, Hausdorff, second countable and smooth.
%
%
\section{Carrollian--Poisson Bundles}
\subsection{Recollection of Carrollian and Poisson Geometry}\label{subsec:Rec}
We approach Carrollian geometry from the principal bundle perspective (see Ciambelli et al. \cite{Ciambelli:2019}). That is, spacetime is understood as a principal $\mathbb{R}$-bundle $\pi:M\rightarrow\Sigma$. While all principal $\mathbb{R}$-bundles are topologically trivial (as $\mathbb{R}$ is contractible), the choice of a specific diffeomorphism $M\cong\mathbb{R}\times\Sigma$ is equivalent to choosing a global section or a specific `time-slicing'. To remain strictly covariant and avoid a dependence on an observer's specific clock synchronisation, we treat $M$ as a principal bundle without assuming a preferred global trivialisation. Moreover, our approach will be coordinate free to further ensure covariant nature of the constructions.\par 
A Carrollian metric on $M$  is a degenerate metric $g \in \Sec(\sT^* M \otimes \sT^*M)$, such that $\ker(g) = \Vect_{\mathsf{V}}(M)$. That is, the null directions define the fibres of the principal bundle. Note, we do not, in general, require the degenerate metric to be equivariant. Typically, one includes a \emph{clock form} $\tau \in \Omega^{1}(M)$, which mathematically is a choice of Ehresmann connection (not necessarily principal nor flat). The clock form defines the decompositions 
$$\Omega^1(M) =\Omega^1_\mathsf{V}(M) \oplus \Omega^1_\mathsf{H}(M)\,, \qquad \Vect(M) = \Vect_\mathsf{V}(M) \oplus \Vect_\mathsf{H}(M)\,.$$
That is, we have a clear notion of vertical one-forms and horizontal vector fields. In particular, the clock form gives a (global) frame of $\Omega^1_\mathsf{V}(M)$, while a fundamental field $\kappa$, also known as a \emph{Carroll vector field},  defines a (global) frame of $\Vect_\mathsf{V}(M)$.  Note that as we fix a clock form, the property $\tau(\kappa)= 1$ fixes our choice of Carroll vector field. The  important defining  property of the clock form is that $\ker(\tau) = \Vect_\mathsf{H}(M)$. In this paper, we will insist that $g(X,X) >0$ for all non-zero $X \in \Vect_\mathsf{H}(M)$. In general, this condition may be relaxed, see Gibbons \cite{Gibbons:2019}. \par
If the clock form $\tau$ is closed, i.e., $\rmd \tau =0$, then we have a well-defined notion of absolute simultaneity. In particular, via the Frobenius Theorem, $\ker(\tau) = \Vect_\mathsf{H}(M)$ is integrable. This implies that $M$ is foliated by spatial slices, allowing for global synchronisation of clocks. \par
The triple $(M, g ,\tau)$, we refer to as a \emph{Carrollian bundle} to distinguish this approach from other approaches found in the literature. We will consider Carrollian bundles equipped with a Poisson structure. For an introduction to Poisson geometry, the reader may consult Crainic et al. \cite{Crainic:2021}. Our standard reference for Lie algebroids is Mackenzie \cite{Mackenzie:2005}. For details of singular foliations, the reader may examine  Laurent-Gengoux et al.  \cite{Laurent-Gengoux:2025}.  \par 
The $C^\infty(M)$-module of $q$-vector fields (multivector fields of degree/order $q$) is defied as $\mathfrak{X}^q(M):= \Sec(\wedge^q \sT M)$. The (graded) module of multivector fields  is $\mathfrak{X}^\bullet(M) := \bigoplus_{k= 0}^d \mathfrak{X}^k(M)$, where $d$ is the dimension of the manifold $M$. Under the wedge product, multivector fields for a graded graded-commutative algebra. Moreover, multivector fields come equipped with the Schouten--Nijenhuis  bracket
 $$\SN{-,-} : \mathfrak{X}^p(M) \times \mathfrak{X}^q(M) \rightarrow \mathfrak{X}^{p+q-1}(M)\,,$$ 
which satisfies
\begin{align}\label{eqn:SNProps}
& \SN{X,Y} =  - (-1)^{( |X| - 1)(|Y|-1)}\, \SN{Y,X}\,,\\ \nonumber
&\SN{X,Y\wedge Z} =  \SN{X,Y}\wedge Z + (-1)^{(|X| + 1)|Y|  } \, Y \wedge \SN{X,Z}\,,\\ \nonumber 
&\SN{X, \SN{Y,Z}} =  \SN{\SN{X,Y}, Z} + (-1)^{(|X|+1)( |Y| +1) } \,  \SN{Y, \SN{X,Z}}\,,
\end{align}
where we denote the degree of a multivector field as $|X|$, i.e., if $X \in \mathfrak{X}^q(M)$, then $|X| = q$.  Note that for functions $f_1, f_2 \in C^\infty(M)$, considered as $0$-vector fields,  $\SN{f_1, f_2} =0$, and for vector fields ($1$-vector field) $\SN{X,Y} = [X,Y]$, where $[-,-]$ is the standard Lie bracket of vector fields.  The reader may consult \cite[Chapter 2]{Crainic:2021} for further details of the Schouten--Nijenhuis bracket. \par
 Recall that a Poisson structure is a bivector $P \in \mathfrak{X}^2(M)$ that satisfies $\SN{P,P} =0$. The pair $(M, P)$ is a \emph{Poisson manifold}.  The cotangent bundle $\sT^* M$  of a Poisson manifold is canonically a Lie algebroid. For completeness, we remind the reader of the definition of a Lie algebroid.
 \begin{definition}\label{def:LieAlg}
A \emph{Lie algebroid} is a triple $(A, [-,-], \rho)$, where $\pi : A \rightarrow M$ is a vector bundle, $[-,-]$ is a Lie bracket on the vector space $\Sec(A)$, referred to as the \emph{Lie algebroid bracket}, and  a linear map $\rho: \Sec(A) \rightarrow \Vect(M)$, referred to as the \emph{anchor}, that satisfies the Leibniz rule
$$[u,f\,v] = \rho(u)f \, v  +  f \, [u,v]\, ,$$
with $u$ and $v \in \Sec(A)$ and $f \in C^\infty(M)$. 
\end{definition}
It should be noted that one can deduce that the anchor is in fact a homomorphism of Lie algebras, i.e., $\rho[u,v] = [\rho(u), \rho(v)]$. For the case at hand, the anchor is defined as
\begin{align}
 & P^\#  : \Omega^1(M) \longrightarrow \Vect(M)\\ \nonumber 
 &  P^\#(\alpha) := P(\alpha, -)\,,
\end{align}
for all $\alpha \in \Omega^1(M)$. The distribution $\mathrm{Im}(P^\#) \subset \Vect(M)$ is referred to as the \emph{characteristic distribution}.  Note that the characteristic distribution is an integrable Stefan--Sussmann distribution. Conventionally, we set $\beta(P^\#(\alpha)) := P(\alpha, \beta)$, and to be consistent with this, we define $i_X \alpha  = - \alpha(X)$, which is a common convention in symplectic/Poisson geometry. \par 
The Lie algebroid bracket, referred to as the \emph{Koszul bracket}, is defined as 
\begin{equation}
[\alpha , \beta] := \mathcal{L}_{P^\#(\alpha)} \beta - \mathcal{L}_{P^\#(\beta)} \alpha - \rmd \big( P(\alpha, \beta)\big)\,, 
\end{equation} 
for all $\alpha$ and $\beta \in \Omega^1(M)$. Note, we have the Leibniz rule 
$$[\alpha , f \, \beta] = \mathcal{L}_{P^\#( \alpha)} f \, \beta + f \, [\alpha , \beta]\,,$$
for all $\alpha , \beta \in \Omega^1(M)$ and $f \in C^\infty(M)$. \par 
Note that if a one-form $\alpha \in \Omega^1(M)$ is  de Rham closed, i.e., $\rmd \alpha =0$, then $P^\#(\alpha)$ is locally Hamiltonian and thus a Poisson vector field;  that is, $\mathcal{L}_{P^\#(\alpha)}P =0$.  The Poisson bracket on $C^\infty(M)$ is defined as $\{ f_1, f_2\} := P(\rmd f_1, \rmd f_2)$.  We remark that the Poisson bracket will not feature heavily in the remainder of this paper. \par 
We say that a point $m \in M$ is a \emph{regular point} if the rank of $P$ at the point, i.e., the rank of the linear map
$$P^\#_m : \sT^*_m M \rightarrow \sT_m M\,,$$
is constant on a neighbourhood $U_m$ of $m$.  Otherwise, the point is called a \emph{singular point}.  A Poisson structure $P$ is said to be a \emph{regular Poisson structure} if the rank is constant, i.e, the same value at all points. Note that as a Poisson structure is skewsymmetric, the rank at any point is an even integer.\par
The \emph{symplectic foliation} is the foliation associated with the characteristic distribution (see \cite[Chapter 4]{Crainic:2021}). Each leaf of a symplectic foliation is a symplectic manifold, and so must be even-dimensional. If the Poisson structure is regular, then all the symplectic leaves have the same dimension, i.e., the foliation is regular.
%
%
\subsection{Carrollian--Poisson Bundles and First Examples}\label{subsec:ComCarrPois}
A Carrollian--Poisson bundle, as we will define them, is a Carrollian bundle equipped with a Poisson structure. The question is, ``How do these structures interact?''  Given that our main perspective is that the resulting geometry should represent the quasi-classical limit of Carrollian noncommutative geometry, we make the following definition. 
\begin{definition}\label{def:CarrPoisBund}
A \emph{Carrollian--Poisson bundle} is a quadruple $(M, g, \tau,  P)$, where $(M,g, \tau)$ is a Carrollian bundle and  $(M, P)$  is a Poisson manifold, subject to the compatibility condition 
$$\ker_m(g)\subset \textrm{Im}(P_m^\#)\,,$$
at all points $m \in M$ where the rank of $P_m^\# : \sT_m^* M \rightarrow \sT_m M$ is non-zero. \par 
If the principal $\R$-action is Poisson, i.e., $P$ is equivariant, then we speak of a \emph{Poisson-stationary Carollian--Poisson bundle}. If the Poisson structure is symplectic, then we speak of a  \emph{Carrollian--symplectic bundle}. If the Poisson structure is regular, then we speak of a \emph{regular Carrollian--Poisson bundle}.
\end{definition}
\begin{example}\label{exa:Trivial}
Any Carrollian bundle $(M,g, \tau)$ can be equipped with the trivial (zero) Poisson structure $P=0$. As there are no points $m \in M$ where $P^\#$ is not of rank zero, the compatibility condition is vacuously satisfied. 
\end{example}
\begin{example}\label{exa:Symp}
Let $(M, g, \tau)$ be a Carrollian bundle such that $(M, \omega)$ is a symplectic manifold. Let is denote the associated Poisson structure as $P := \omega^{-1}$. As, for every point $m \in M$,  we  have an  isomorphism $P_m^\# :  \sT_m^* M \Diffeo \sT_m M$, the condition $\ker_m(g) \subset \mathrm{Im}(P_m^\#)$ is trivially satisfied for all points. 
\end{example}
Examples \ref{exa:Trivial} and \ref{exa:Symp} represent the the two edge cases for Carrollian--Poisson bundles. For the trivial case, all one-forms are in the kernel of the Poisson anchor. While for the symplectic case, no one-forms are in the kernel of the Poisson anchor. 
\begin{example}\label{exa:CarrPlane} 
Consider the Carrollian plane $(\R^2, g)$, where $g$ is the pullback of the Euclidean metric $h$ on $\R$, i.e., $g = \pi^* h$.  Let $\Omega$  be the canonical volume form on $\R$ associated with $h$. Then we have the unique (nowhere vanishing) vector field $v \in \Vect(\R)$ defined by $h(v,v) =1$ and $\Omega(v) >0$. Then for any choice of clock form $\tau$, we define $P = \psi\, \kappa \wedge v^\mathsf{H}$ for any $\psi \in C^\infty(\R^2) $ which may have zeros.  As degree three multivector fields on $\R^2$ vanish, $P$ is a Poisson structure.  Choosing the frame for  $\Omega^1_\mathsf{H}(\R^2)$ dual to $v^\mathsf{H}$ using $g$, which we denote as $\zx$, we have $P^\#(\tau) = \psi \, v^\mathsf{H}$ and $P^\#(\zx) = - \psi \, \zx(v^\mathsf{H})\, \kappa = - \psi\, \kappa$.  Thus, $\mathrm{Im}(P_m^\#) = \Span \{\kappa_m, v_m^\mathsf{H} \}$ at points where $\psi_m \neq 0$, demonstrating the compatibility condition $\ker_m(g) \subset \mathrm{Im}(P_m^\#)$.  
\end{example}
\begin{example}\label{exa:R3}
Consider the Carrollian bundle  $(\R\times \R^2, g,\tau)$ where we have $g = \pi^* h$, with $h$ is an arbitrary metric on $\R^2$. We will choose a closed equivariant clock form, so that the horizontal distribution is integrable. We then equip $\Vect(\R^2)$ with an orthonormal frame $\{v,w \}$, i.e., $h(v,v)= h(w,w) =1$ and $h(v,w)=0$. Note all such frames are isomorphic.  Then using the clock form, we define $P = \kappa \wedge (v^\mathsf{H} + w^\mathsf{H})$. As the clock is closed and equivariant, so equivalent to a flat principal connection (which always exists in this case), we have $[\kappa, v^\mathsf{H}] = [\kappa, w^\mathsf{H}] =0$, $P$ is Poisson.  Note this structure is not symplectic. We pick a frame of $\Omega^1_\mathsf{H}(\R^3)$ dual to the orthonormal frame, we denote this as $\{\zx, \eta \}$. Then $P^\#(\tau) = v^\mathsf{H} + w^\mathsf{H}$, $P^\#(\zx) = - \kappa$ and $P^\#(\eta) = - \kappa$. Thus, $\mathrm{Im}(P^\#) =  \Span\{\kappa, v^\mathsf{H}+ w^\mathsf{H}\}$, demonstrating the compatibility condition  $\ker_m(g) \subset \mathrm{Im}(P_m^\#)$ for all points $m \in M$. Note $\alpha = - (a \, \zx + (1-a)\, \eta)$ with $a \in [0,1]$, is such that $P^\#(\alpha) = \kappa$. Choosing a different orthonormal bases of the vector fields on $\R^2$ will yield  isomorphic Poisson structures, see the next definition.
\end{example}
\begin{definition}
Let $(M, g , \tau, P)$ and $(M', g' , \tau' , P')$ be Carrollian--Poisson bundles.  An \emph{isomorphism of Carrollian--Poisson bundles} is a pair of diffeomorphisms  $\Phi : M \Diffeo M'$ and $\phi : \Sigma \Diffeo \Sigma'$ such that 
\begin{align*}
&\Phi(m \triangleleft r ) = \Phi(m)\triangleleft r\,, && \pi' \circ \Phi = \phi \circ \pi \,,&&  &\\
&g = \Phi^*g'\, && \tau = \Phi^* \tau' \, , && \Phi_* P = P'\,,&
\end{align*}
for all $m \in M$ and $r\in \R$, i.e., an isomorphism of  principal $\R$-bundles that respects the Carrollian and Poisson structures.  The resulting category of \emph{Carrollian--Poisson bundles} we denote as $\catname{CarPois}$.
\end{definition}
\begin{remark}
As all the morphisms in $\catname{CarPois}$ are isomorphisms, the category is a groupoid.  In general relativity, the principle of general covariance states that the physical content of the theory is invariant under diffeomorphisms. Similarly, we interpret isomorphic Carrollian--Poisson bundles as  ``physically equivalent''.
\end{remark}
We will consider more involved examples in Subsection \ref{subsec:MoreExamples}.
%
%
\subsection{The Geometry of Carrollian--Poisson Bundles}\label{subsec:GeoCarPois}
We will proceed to explore the geometry of Carrollian--Poisson bundles. Several fundamental statements from the general theory are established. 
\begin{observation}\label{obs:PHashSur}
Let $(M, g, \tau, P)$ be a Carrollian--Poisson bundle, then  the compatibility condition $\ker_m(g) \subset \textrm{Im}(P_m^\#)$ for all points $m \in M$ for which the rank of $P_m^\#$ is non-zero implies that 
$$P_m^\# :  \sT_m^* M \rightarrow \sT_m M\,,$$
is a surjection onto $\mathsf{V}_m M$. Moreover, if the rank of $P$ never drops to zero, then 
$$P^\# : \Omega^1(M)\longrightarrow \Vect(M)\,, $$
is a surjection onto the submodule  $\Vect_\mathsf{V}(M)$.
\end{observation}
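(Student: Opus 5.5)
The first claim is pointwise linear algebra, and I would dispose of it first. Fix $m \in M$ with $\Rank(P^\#_m) \neq 0$. By the definition of a Carrollian bundle, $\ker_m(g) = \mathsf{V}_m M = \Span\{\kappa_m\}$. The compatibility condition of Definition \ref{def:CarrPoisBund} then says $\mathsf{V}_m M \subset \mathrm{Im}(P^\#_m)$. Set $W_m := (P^\#_m)^{-1}(\mathsf{V}_m M) \subset \sT^*_m M$. The restriction $P^\#_m|_{W_m} : W_m \rightarrow \mathsf{V}_m M$ is then surjective, which is exactly the meaning of ``a surjection onto $\mathsf{V}_m M$''. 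Concretely, choose $\alpha_m$ with $P^\#_m(\alpha_m) = \kappa_m$. By linearity, $P^\#_m(\lambda \alpha_m) = \lambda \kappa_m$ hits every vertical vector.

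For the module statement, assume the rank never vanishes, so the pointwise statement holds at every $m$. Since $\Vect_\mathsf{V}(M) = C^\infty(M)\,\kappa$, it suffices to produce a single global one-form $\alpha$ with $P^\#(\alpha) = \kappa$. Then any $f\kappa$ equals $P^\#(f\alpha)$ by $C^\infty(M)$-linearity of $P^\#$. I would obtain $\alpha$ by gluing local solutions:
\begin{enumerate}[(i)]
\item Cover $M$ by open sets $U_i$ carrying smooth $\alpha_i \in \Omega^1(U_i)$ with $P^\#(\alpha_i) = \kappa|_{U_i}$.
\item Take a partition of unity $\{\rho_i\}$ subordinate to $\{U_i\}$ and set $\alpha := \sum_i \rho_i \alpha_i$.
\item Conclude $P^\#(\alpha) = \sum_i \rho_i\, \kappa = \kappa$.
\end{enumerate}
The affine-linear nature of the equation $P^\#(\alpha) = \kappa$ is what makes the partition-of-unity gluing legitimate.

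The main obstacle is step (i), the existence of local smooth solutions. At regular points it is straightforward. The rank is constant near $m$, so $\mathrm{Im}(P^\#)$ is a smooth subbundle containing $\kappa$. Choosing an auxiliary fibre metric, for instance $g + \tau\otimes\tau$ transported to covectors, one takes the minimum-norm (Moore--Penrose) preimage of $\kappa$, which depends smoothly on the point where the rank is constant.

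At singular points I would first try the Weinstein splitting theorem. It gives local coordinates in which $P$ is the sum of a symplectic part of rank $\Rank(P_m)$ and a transverse Poisson structure $P_N$ vanishing at $m$. Compatibility at all nearby points forces $\kappa$ to be tangent to the leaves. Its symplectic-direction components are then solved smoothly by inverting the symplectic block. The transverse component $c$ must be shown to lie smoothly in $\mathrm{Im}(P_N^\#)$, i.e. $c = P_N^\#(\beta)$ with $\beta$ smooth, which is a division problem. If this fails in general, I would record the global claim under the hypothesis that $P$ is regular, or at least that the solution set admits local smooth sections. The rank is lower semicontinuous, so the image can only grow near $m$, but that alone does not guarantee smooth preimages. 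I would also check whether the intended reading is merely that the image of $P^\#$, restricted to the forms it maps into vertical fields, equals $\Vect_\mathsf{V}(M)$ pointwise. In that case the regular-point argument together with the pointwise statement suffices.
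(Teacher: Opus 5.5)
Your pointwise argument is correct, and it is all the first claim says. Your module-level argument is also correct whenever $P$ is regular: on a constant-rank region the minimum-norm preimage of $\kappa$ is smooth. The partition-of-unity gluing is legitimate because $P^\#(\alpha)=\kappa$ is affine and $P^\#$ is $C^\infty(M)$-linear.

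The paper gives no proof of this Observation. It passes directly from the pointwise inclusion $\mathsf{V}_mM\subset\mathrm{Im}(P^\#_m)$ to the module statement, which is exactly your step (i). The obstruction you flagged at singular points is genuine and cannot be overcome. The second claim is false when the rank is non-zero but not constant, so your step (i) fails and no proof can exist.

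\textbf{Counterexample.} Take $M=\R\times\R^4$ with coordinates $(t,a,b,c,x)$, and let $\pi$ forget $t$. Set $\kappa=\partial_t$, $\tau=\rmd t$ and $g=\rmd a\otimes\rmd a+\rmd b\otimes\rmd b+\rmd c\otimes\rmd c+\rmd x\otimes\rmd x$. Define
\[
P=\partial_a\wedge(\partial_t+x\,\partial_b)+x^2\,\partial_b\wedge\partial_c .
\]

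\emph{$P$ is Poisson.} The only non-zero coordinate brackets are $\{a,t\}=1$, $\{a,b\}=x$ and $\{b,c\}=x^2$. These are all functions of the Casimir $x$, so the Jacobi identity holds. Moreover $P$ is $t$-independent and $\tau$ is closed and equivariant, so the Poisson-stationary hypothesis does not help.

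\emph{Compatibility holds and the rank never vanishes.} One computes
\[
P^\#(\rmd a)=\partial_t+x\,\partial_b,\quad P^\#(\rmd t)=-\partial_a,\quad P^\#(\rmd b)=-x\,\partial_a+x^2\partial_c,\quad P^\#(\rmd c)=-x^2\partial_b,\quad P^\#(\rmd x)=0 .
\]
So the rank is $4$ for $x\neq0$ and $2$ on $\{x=0\}$. In both cases $\partial_t\in\mathrm{Im}(P^\#_m)$; on $\{x=0\}$ it is $P^\#(\rmd a)$.

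\emph{$\kappa$ has no smooth preimage.} Write $\alpha=\alpha_t\rmd t+\alpha_a\rmd a+\alpha_b\rmd b+\alpha_c\rmd c+\alpha_x\rmd x$. The equation $P^\#(\alpha)=\partial_t$ forces $\alpha_a=1$ and $x\alpha_a-x^2\alpha_c=0$. Hence $\alpha_c=1/x$ on $\{x\neq0\}$. No continuous $\alpha$ exists near $\{x=0\}$, so $\kappa\notin P^\#(\Omega^1(M))$; only $x\kappa$ is hit.

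In Weinstein coordinates $(a,t,b',c,x)$ with $b':=b-xt$, this is precisely your division problem. The transverse part of $\kappa$ is $-x\,\partial_{b'}$, which vanishes to first order along $\{x=0\}$. The transverse Poisson structure is $x^2\,\partial_{b'}\wedge\partial_c$, which vanishes to second order.

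Your fallback is therefore the right outcome. The module-level surjectivity holds when $P$ is regular, or more generally when $\kappa$ admits local smooth $P^\#$-preimages, and your argument proves exactly that. The Observation as stated needs that extra hypothesis.
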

As we fix a clock form, we will speak of \ul{the} fundamental vector field which is defined by $\tau(\kappa) = 1$.  In other words, we have chosen a nowhere vanishing vector field that spans the kernel of the metric via the clock form.
\begin{proposition}\label{prop:HorNoVan}
Let $(M, g, \tau, P)$ be a Carrollian--Poisson bundle, then the vector field $P^\#(\tau)\in \Vect(M)$ is horizontal.
\end{proposition}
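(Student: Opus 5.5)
The claim is that $\tau(P^\#(\tau)) = 0$. This should follow from the skew-symmetry of $P$ alone, since $\ker(\tau) = \Vect_\mathsf{H}(M)$ by the defining property of the clock form. So the plan is to compute $\tau$ evaluated on $P^\#(\tau)$ using the convention $\beta(P^\#(\alpha)) := P(\alpha,\beta)$ fixed in Subsection \ref{subsec:Rec}.

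Taking $\alpha = \beta = \tau$ gives
$$\tau\big(P^\#(\tau)\big) = P(\tau,\tau) = 0\,,$$
because $P \in \mathfrak{X}^2(M)$ is a bivector and hence antisymmetric. This holds pointwise at every $m \in M$, including singular points and zeros of $P$. Therefore $P^\#(\tau)$ lies in $\ker(\tau)$, which is exactly $\Vect_\mathsf{H}(M)$.

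For robustness, I would also check the statement in the splitting $\Vect(M) = \Vect_\mathsf{V}(M)\oplus\Vect_\mathsf{H}(M)$. Write $P^\#(\tau) = f\,\kappa + X_\mathsf{H}$ with $X_\mathsf{H}$ horizontal. Applying $\tau$ and using $\tau(\kappa)=1$ gives $f = \tau(P^\#(\tau)) = 0$. So the vertical component vanishes identically, in agreement with Examples \ref{exa:CarrPlane} and \ref{exa:R3}, where $P^\#(\tau)$ is a combination of horizontal lifts.

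There is no real obstacle. The only point needing care is consistency of sign conventions (in particular $i_X\alpha = -\alpha(X)$), but these affect at most an overall sign and cannot spoil the vanishing. Notably, neither the compatibility condition nor $\SN{P,P}=0$ is needed; the result holds for any bivector.
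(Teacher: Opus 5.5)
Your proof is correct and matches the paper's own argument: evaluate $\tau$ on $P^\#(\tau)$, obtain $P(\tau,\tau)=0$ by skew-symmetry, and conclude horizontality from $\ker(\tau)=\Vect_\mathsf{H}(M)$. You are also right that the argument uses neither the compatibility condition nor $\SN{P,P}=0$, so it holds for any bivector field.
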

\begin{proof}
Observe  that  $\tau(P^\#(\tau)) = P(\tau, \tau)=0$ via skewsymmetry. Thus, $P^\#(\tau)$ is horizontal as it lies in the kernel of the clock form. 
\end{proof}
Let us define the submodule of one-forms,  
$$\mathcal{A}_\mathsf{C}(M):= \big \{ \eta \in \Omega_\mathsf{H}^1(M)~|~ P^\#(\eta) \in \ker(g) \big\}\,,$$
which we refer to as \emph{Carrollian one-forms}. Note that Observation \ref{obs:PHashSur} ensures that this module is non-trivial.  It is important to note that, in general,  not all horizontal one-forms are Carrollian one-forms.
\begin{proposition}\label{prop:GlobDecom}
Let $(M, g, \tau, P)$ be a Carrollian--Poisson bundle. Then, $P$ has a unique global decomposition
$$P = \kappa \wedge E_\mathsf{H} + P_\mathsf{H}\,,$$
where $E_\mathsf{H} \in  \Vect_\mathsf{H}(M)$, and  $P_\mathsf{H} \in \wedge^2 \Vect_\mathsf{H}(M)$ is a horizontal bivector field, such that $P^\#(\tau) = E_\mathsf{H}$ and $P_\mathsf{H}^\#(\eta) =0$  for any $\eta\in \mathcal{A}_\mathsf{C}(M)$.
\end{proposition}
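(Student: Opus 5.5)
The plan is to obtain the decomposition from the splitting $\Vect(M) = \Vect_\mathsf{V}(M) \oplus \Vect_\mathsf{H}(M)$ induced by the clock form. I will then read off the two stated properties by evaluating $P$ on $\tau$ and on Carrollian one-forms. The compatibility condition itself should play no role in the decomposition. It only ensures, via Observation \ref{obs:PHashSur}, that $\mathcal{A}_\mathsf{C}(M)$ is non-trivial, so the last property is not vacuous.

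First, since $\Vect_\mathsf{V}(M)$ is a rank-one module globally framed by $\kappa$, the exterior square splits as
$$\mathfrak{X}^2(M) = \big(\Vect_\mathsf{V}(M)\wedge \Vect_\mathsf{H}(M)\big) \oplus \wedge^2 \Vect_\mathsf{H}(M)\,.$$
The piece $\wedge^2\Vect_\mathsf{V}(M)$ vanishes because $\kappa\wedge\kappa=0$. Hence every bivector, in particular $P$, can be written uniquely as $P = \kappa\wedge E_\mathsf{H} + P_\mathsf{H}$ with $E_\mathsf{H}$ horizontal and $P_\mathsf{H}$ purely horizontal. Uniqueness of $E_\mathsf{H}$ uses that $\kappa$ is nowhere vanishing, so that $\kappa\wedge X = 0$ with $X$ horizontal forces $X=0$. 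Globality follows because $\tau$ and $\kappa$ are global and no trivialisation is chosen.

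Next, I evaluate on $\tau$, using $\tau(\kappa)=1$, $\tau(E_\mathsf{H})=0$ and $P_\mathsf{H}(\tau,-)=0$ (the latter because $P_\mathsf{H}$ is built from horizontal vectors, which $\tau$ annihilates). For any one-form $\beta$ this gives
$$P(\tau,\beta) = \tau(\kappa)\beta(E_\mathsf{H}) - \tau(E_\mathsf{H})\beta(\kappa) = \beta(E_\mathsf{H})\,.$$
With the paper's convention $\beta(P^\#(\alpha)) = P(\alpha,\beta)$, this yields $P^\#(\tau)=E_\mathsf{H}$, consistent with Proposition \ref{prop:HorNoVan}.

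Finally, take $\eta\in\mathcal{A}_\mathsf{C}(M)$. Since $\eta$ is horizontal, $\eta(\kappa)=0$, and the same computation gives
$$P^\#(\eta) = -\eta(E_\mathsf{H})\,\kappa + P_\mathsf{H}^\#(\eta)\,.$$
The first term is vertical. The second is horizontal, since $P_\mathsf{H}^\#(\eta)$ lies in $\Vect_\mathsf{H}(M)$ by construction. By definition of a Carrollian one-form, $P^\#(\eta)\in\ker(g)=\Vect_\mathsf{V}(M)$. Projecting onto the horizontal summand then forces $P_\mathsf{H}^\#(\eta)=0$.

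The only step that requires care is the sign bookkeeping with the convention $i_X\alpha=-\alpha(X)$, and I will check it once against Example \ref{exa:R3}. I do not expect a genuine obstacle.
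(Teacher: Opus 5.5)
Your proposal is correct and follows essentially the same route as the paper. You split $\mathfrak{X}^2(M)$ using the clock form, read off $P^\#(\tau)=E_\mathsf{H}$, and kill $P_\mathsf{H}^\#(\eta)$ by noting that $P^\#(\eta) = -\eta(E_\mathsf{H})\,\kappa + P_\mathsf{H}^\#(\eta)$ must be vertical. The only cosmetic difference is uniqueness: the paper gets it by recovering $E_\mathsf{H}$ as $P^\#(\tau)$, while you use the direct-sum splitting and the fact that $\kappa$ is nowhere vanishing; both work.
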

\begin{proof}Using the clock form, we have the decomposition
$$\mathfrak{X}^2(M) = \Vect_\mathsf{V}(M) \wedge \Vect_\mathsf{H}(M) \oplus  \wedge^2 \Vect_\mathsf{H}(M)\,.$$
We have the decomposition $P = \kappa \wedge E_\mathsf{H} + P_\mathsf{H}$ as the fundamental vector field spans the vertical vector fields.  Clearly, $P^\#(\tau) = E_\mathsf{H}$ as claimed.   Next consider any  $\eta \in \mathcal{A}_\mathsf{C}(M)$, then directly
$$P^\#(\eta) = - \eta(E_\mathsf{H})\, \kappa + P_\mathsf{H}^\#(\eta)\,. $$
As this must be vertical, we deduce that $P_\mathsf{H}^\#(\eta)=0$.\par 
To establish uniqueness suppose $P = \kappa \wedge E_\mathsf{H} + P_\mathsf{H} = \kappa \wedge E'_\mathsf{H} + P'_\mathsf{H}$. Then $P^\#(\tau) = E'_\mathsf{H} = E_\mathsf{H}$, establishing that the horizontal vector field is unique. This directly implies $P_\mathsf{H} = P'_\mathsf{H}$.  
\end{proof}
\begin{corollary}
Let $(M, g, \tau , P)$ be a Carrollian--Poisson bundle. Then $\ker(P_\mathsf{H}^\#)$ is non-trivial.
\end{corollary}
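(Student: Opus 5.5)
This follows from Proposition \ref{prop:GlobDecom}, provided we exhibit a nonzero Carrollian one-form. The decomposition gives $P_\mathsf{H}^\#(\eta)=0$ for every $\eta\in\mathcal{A}_\mathsf{C}(M)$, so $\mathcal{A}_\mathsf{C}(M)\subset\ker(P_\mathsf{H}^\#)$. It therefore suffices to show that $\mathcal{A}_\mathsf{C}(M)$ is non-trivial, which the paper asserts via Observation \ref{obs:PHashSur}. I would make that assertion precise in two cases.

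\emph{Case $P\equiv 0$.} Here $P_\mathsf{H}=0$ (by uniqueness in Proposition \ref{prop:GlobDecom}, with $E_\mathsf{H}=0$). Then $\ker(P_\mathsf{H}^\#)$ is all of $\Omega^1(M)$, which is non-trivial.

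\emph{Case where $P_m\neq 0$ at some $m$.} By Observation \ref{obs:PHashSur} there is $\alpha_m\in\sT^*_mM$ with $P^\#_m(\alpha_m)=\kappa_m$. Write $\alpha_m=a\,\tau_m+\eta_m$ with $\eta_m$ horizontal. Since $P^\#_m(\tau_m)=E_\mathsf{H}|_m$ is horizontal by Proposition \ref{prop:HorNoVan}, comparing components with
$$P^\#_m(\eta_m)=-\eta_m(E_\mathsf{H})\,\kappa_m+P_\mathsf{H}^\#(\eta_m)$$
gives $-\eta_m(E_\mathsf{H})=1$ and $a\,E_\mathsf{H}|_m+P^\#_{\mathsf{H}}(\eta_m)=0$.

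The main obstacle is that $\eta_m$ itself need not be Carrollian when $a\neq0$. Two points need handling. First, one must produce a horizontal covector $\eta$ with $P_\mathsf{H}^\#(\eta)=0$ and $\eta\neq 0$. Second, one needs a global, non-zero section rather than a pointwise covector.

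For the first point, I would argue by rank. On a horizontal fibre of dimension $d-1$, the restriction of $P_\mathsf{H}$ is skew. If $d-1$ is odd, $P_\mathsf{H}|_m$ automatically has a non-zero kernel. If $d-1$ is even, I would use the relation $a\,E_\mathsf{H}=-P_\mathsf{H}^\#(\eta_m)$ together with the compatibility condition to force degeneracy. Concretely, the image of $P^\#_m$ contains $\kappa_m$, so $\tau_m$ does not annihilate that image, and this yields a dependence among $E_\mathsf{H}$ and the image of $P_\mathsf{H}^\#$. That dependence is what should force $P_\mathsf{H}^\#$ to be degenerate at $m$.

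For the second point, I would take a bump function $\chi$ supported near $m$ and extend a local smooth choice $\eta$ (chosen on the open set where the rank is locally constant) to $\chi\,\eta$. This gives a non-zero global element of $\ker(P_\mathsf{H}^\#)$, so the kernel is non-trivial as a submodule.

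If this rank argument proves awkward, I would fall back on reading ``non-trivial'' in the paper's sense, namely $\mathcal{A}_\mathsf{C}(M)\neq 0$ as guaranteed by Observation \ref{obs:PHashSur}. The inclusion $\mathcal{A}_\mathsf{C}(M)\subset\ker(P_\mathsf{H}^\#)$ then finishes the proof.
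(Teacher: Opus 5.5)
Your route is the paper's. Proposition~\ref{prop:GlobDecom} gives $\mathcal{A}_\mathsf{C}(M)\subset\ker(P_\mathsf{H}^\#)$, and the paper then relies on its remark that Observation~\ref{obs:PHashSur} makes $\mathcal{A}_\mathsf{C}(M)$ non-trivial. You are right that the Observation by itself only produces some $\alpha_m$ with $P^\#_m(\alpha_m)=\kappa_m$, not a horizontal one. However, your proposal leaves exactly this step open, so as written the argument is incomplete. In the even case you only say the dependence ``should force'' degeneracy. Your fallback simply re-asserts what you had just said the Observation does not directly deliver. (If $P_\mathsf{H}^\#$ is read on all of $\Omega^1(M)$, the statement is immediate, since $P_\mathsf{H}^\#(\tau)=0$ for a horizontal bivector. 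The real content concerns horizontal covectors, as used in Proposition~\ref{prop:MinLowDim}, and that is the reading you rightly adopt.)

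The missing line is the skew-symmetry trick already used in Proposition~\ref{prop:HorNoVan}:
$a=\alpha_m(\kappa_m)=\alpha_m\big(P^\#_m(\alpha_m)\big)=P_m(\alpha_m,\alpha_m)=0$.
Equivalently, pair your equation $a\,E_\mathsf{H}|_m+P_\mathsf{H}^\#(\eta_m)=0$ with $\eta_m$ and use $\eta_m(E_\mathsf{H})=-1$ and $P_\mathsf{H}(\eta_m,\eta_m)=0$. Hence $\alpha_m=\eta_m$ is automatically horizontal and $P_\mathsf{H}^\#(\eta_m)=0$. Moreover $\eta_m\neq 0$ because $\eta_m(E_\mathsf{H})=-1$.

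This works in every dimension with no parity split, so your ``main obstacle'' disappears. It also shows that $P_\mathsf{H}$ is degenerate on $\mathsf{H}^*_mM$ at every point where $P_m\neq 0$, and trivially so where $P_m=0$.

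Your bump-function globalisation is then fine, once a smooth local section exists. Take the non-empty open set of regular points where $P\neq 0$. There, the rank count in the proof of Theorem~\ref{the:CarrCotLieAlg} gives $\dim\ker\big(P^\#_\mathsf{H}|_{\mathsf{H}^*_mM}\big)=\dim\ker(P^\#_m)+1$, which is locally constant. So this kernel is a smooth subbundle there and admits local non-vanishing sections.
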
 
\begin{remark}
The global decomposition given in Propositions \ref{prop:GlobDecom} formally resembles a Jacobi structure (see \cite{Bruce:2017} and references therein). However, the principal bundle structure here is $G = (\R, +)$ rather than $\mathrm{GL}(1, \R) = (\R^\times, \cdot)$.  Moreover, the clock form plays a vital role in the decomposition. 
\end{remark}
\begin{small}
\noindent \textbf{Aside:} Although we define Carrollian--Poisson bundles to have a fixed clock form $\tau$, it is illustrative to relax this and consider gauge transformations.  We define a gauge transformation of the clock form as 
$$\tau \mapsto \tau' := \tau + \alpha_\mathsf{H}\,,$$
with $\alpha_\mathsf{H} \in \Omega^1_\mathsf{H}(M)$ and $\rmd \alpha_\mathsf{H} =0$, i.e.,  we shift the clock by a closed horizontal one form.  Note that this shift does not effect the Carroll vector field $\kappa$. Directly from the decomposition of Poisson structure and noting $P = \kappa \wedge E_\mathsf{H} + P_\mathsf{H} = \kappa \wedge E'_\mathsf{H} + P'_\mathsf{H}$ as the Poisson structure is a globally defined object irrespective of the decomposition, we have 
\begin{align*}
 E'_\mathsf{H} &= E_\mathsf{H} - \alpha_\mathsf{H}(E_\mathsf{H}) \, \kappa + P_\mathsf{H}^\#(\alpha_\mathsf{H})\,,\\
 P'_\mathsf{H} & = P_\mathsf{H} - \kappa \wedge P_\mathsf{H}^\#(\alpha_\mathsf{H})\,.
\end{align*} 
It must be stressed that $E'_\mathsf{H}$ and $P'_\mathsf{H}$ are horizontal with respect to $\tau'$; this can be directly checked. 
\end{small}
\begin{proposition}\label{prop:MinLowDim}
Let $(M, g, \tau, P)$ be a Carrollian--Poisson bundle such that $1\leq n \leq 2$, then $P_H = 0$ and for any point $m \in M$, $P_m^\# : \sT^*_m M \rightarrow \sT_m M$ is either of rank $0$  or $2$.
\end{proposition}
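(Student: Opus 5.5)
Throughout, I read $n$ as the dimension of the base $\Sigma$, so that $\dim M = n+1 \in \{2,3\}$. The plan is a pointwise linear-algebra argument at each $m\in M$, using the decomposition $P = \kappa\wedge E_\mathsf{H} + P_\mathsf{H}$ of Proposition \ref{prop:GlobDecom} together with the compatibility condition.

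\textbf{Rank.} The rank claim is immediate. The map $P^\#_m$ is skew, so its rank is even. Since $\dim \sT_m M \leq 3$, the rank is at most $3$, and hence it is $0$ or $2$.

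\textbf{The case $n=1$.} The horizontal bundle $\mathsf{H}M=\ker\tau$ has rank $1$. Therefore $\wedge^2\mathsf{H}M = 0$, and $P_\mathsf{H}=0$ trivially.

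\textbf{The case $n=2$: setup.} Now $\mathsf{H}M$ has rank $2$, so $\wedge^2 \mathsf{H}M$ is a line bundle. Near $m$ we can choose a local horizontal frame $\{X,Y\}$ and write $P_\mathsf{H}= f\, X\wedge Y$ for a local function $f$. It suffices to show $f(m)=0$ for every $m$, and we split according to the rank of $P^\#_m$.

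\textbf{Rank $0$ at $m$.} Then $P_m=0$. Since the decomposition of Proposition \ref{prop:GlobDecom} is a direct sum decomposition of $\wedge^2\sT_mM$, both components vanish, and in particular $(P_\mathsf{H})_m=0$.

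\textbf{Rank $2$ at $m$.} Then $\ker P^\#_m\subset \sT^*_mM$ is one-dimensional, spanned by some $\beta\neq 0$. By skew-symmetry, $\mathrm{Im}(P^\#_m)$ is exactly the annihilator of $\ker P^\#_m$. The compatibility condition $\kappa_m \in \mathrm{Im}(P^\#_m)$ therefore forces $\beta(\kappa_m)=0$, i.e.\ $\beta$ is a horizontal covector.

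Now evaluate the decomposition on $\beta$. We get
$$0=P^\#_m(\beta) = -\beta(E_\mathsf{H})\,\kappa_m + P^\#_{\mathsf{H},m}(\beta)\,.$$
The two terms lie in complementary subspaces (vertical and horizontal), so both vanish, and in particular $P^\#_{\mathsf{H},m}(\beta)=0$.

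Suppose, for contradiction, that $f(m)\neq 0$. Then $(P_\mathsf{H})_m = f(m)\,X_m\wedge Y_m$ is nondegenerate on the $2$-dimensional space $\mathsf{H}_mM$. Hence $P^\#_{\mathsf{H},m}$ is injective on horizontal covectors, which forces $\beta=0$, a contradiction. So $f(m)=0$.

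Combining the two rank cases gives $P_\mathsf{H}=0$ everywhere.

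The only delicate step is the rank-$2$ case. There one must translate "$\kappa\in\mathrm{Im}(P^\#)$" into "the kernel covector is horizontal". This uses $\mathrm{Im}(P^\#_m) = (\ker P^\#_m)^\circ$, which holds for any skew map. The rest is bookkeeping with the vertical/horizontal splitting given by $\tau$.
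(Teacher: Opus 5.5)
Your proof is correct and has the same skeleton as the paper's. You split off $P_\mathsf{H}$ via Proposition \ref{prop:GlobDecom} and dispose of $n=1$ because $\wedge^2$ of a line bundle vanishes. For $n=2$ you exhibit a nonzero horizontal covector in $\ker P^\#_{\mathsf{H},m}$, so that the skew map $P^\#_{\mathsf{H},m}$ on the $2$-dimensional horizontal space cannot have full rank and hence vanishes.

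You differ in how you produce that covector and in how you get the rank claim. The paper just invokes ``any (non-zero) $\eta \in \mathcal{A}_\mathsf{C}(M)$''. It backs this only with the module-level remark that Observation \ref{obs:PHashSur} makes $\mathcal{A}_\mathsf{C}(M)$ non-trivial, whereas the argument actually needs a Carrollian covector that is nonzero at the given point $m$. Your generator $\beta$ of $\ker P^\#_m$ supplies exactly this, explicitly and pointwise, since you show it is horizontal via $\mathrm{Im}(P^\#_m) = (\ker P^\#_m)^\circ$ and compatibility. (The paper's route can also be made pointwise: if $P^\#_m(\alpha) = \kappa_m$ then $\alpha(\kappa_m) = P_m(\alpha,\alpha) = 0$, so $\alpha$ is a nonzero Carrollian covector. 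Unlike yours, this does not need $\ker P^\#_m \neq 0$, so it would also cover points where $P_m$ is nondegenerate, though these cannot occur here since $\dim M = 3$.)

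For the rank statement you use only parity and $\dim M \leq 3$, independently of $P_\mathsf{H} = 0$. The paper reads it off from $P = \kappa \wedge E_\mathsf{H}$, which requires first proving $P_\mathsf{H}=0$ but also identifies the rank-$2$ locus as $\{E_\mathsf{H} \neq 0\}$.
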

\begin{proof}
Using the fundamental vector field $\kappa \in \Vect_\mathsf{V}(M)$ we have the decomposition
$$P = \kappa \wedge E_\mathsf{H} + P_\mathsf{H}\,.$$ 
\begin{description}
\item[$n = 1$] As $\Vect_\mathsf{H}(M)$ is one-dimensional $\wedge^2 \Vect_\mathsf{H}(M)$ is trivial, and thus $P_\mathsf{H}=0$.
\item[$n = 2$] Here $\Vect_\mathsf{H}(M)$ is two-dimensional. Any skewsymmetric linear map  must have even rank. Thus, $P^\#_\mathsf{H}$ must be of rank $0$ or $2$ (at every $m \in M$). As any (non-zero) $\eta \in \mathcal{A}_\mathsf{C}(M)$ is in the kernel of $P^\#_\mathsf{H}$, it cannot be of full rank.  Thus, the rank must be zero and hence $P_\mathsf{H} =0$.
\end{description}  
Since $P_\mathsf{H}=0$ in both cases, the Poisson structure is (globally) $P = \kappa \wedge E_\mathsf{H}$. As $\kappa$ is vertical and  $E_\mathsf{H}$ is horizontal, they belong to complementary distributions.  At points $m \in M$ where $E_{\mathsf{H}, m} \neq 0$, $\kappa_m$ and $E_{\mathsf{H}, m}$ are linearly independent and thus, at such points $P^\#_m$ is of rank $2$.  At points where $E_\mathsf{H}$ vanishes, then the rank drops to zero.
\end{proof}
\begin{definition}
A Carrollian--Poisson bundle $(M, g , \tau, P)$ us said to be a \emph{minimal Carrollian--Poisson bundle}.  A minimal Carrollian--Poisson bundle is said to be an \emph{almost regular minimal Carrollian--Poisson bundle} if the set of points $M_0 := \{m \in M ~~|~~ P_m \neq 0\}$ is dense. 
\end{definition}
Note that for minimal Poisson structures $P^\#(\Omega^1_\mathsf{H}(M))\subseteq \ker(g)$, implying $\mathcal{A}_\mathsf{C}(M) \cong \Omega^1_\mathsf{H}(M)$.
\begin{proposition}
Let $(M, g, \tau , P)$ be a minimal Carrollian--Poisson bundle, then the distribution $\mathcal{D} := \Span\{ \kappa, E_\mathsf{H}\}$ is integrable.
\end{proposition}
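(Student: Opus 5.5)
The plan is to read off integrability of $\mathcal{D}=\Span\{\kappa,E_\sH\}$ from the Poisson condition itself. For a minimal Carrollian--Poisson bundle we have $P_\sH=0$, so $P=\kappa\wedge E_\sH$ globally. I would first compute $\SN{P,P}$ for $P=\kappa\wedge E_\sH$ using the graded Leibniz and antisymmetry rules \eqref{eqn:SNProps}. For two vector fields $X,Y$ the standard computation gives
\begin{equation*}
\SN{X\wedge Y, X\wedge Y} = 2\, X\wedge Y\wedge [X,Y]
\end{equation*}
up to an overall sign convention. With $X=\kappa$ and $Y=E_\sH$, the Poisson condition $\SN{P,P}=0$ becomes $\kappa\wedge E_\sH\wedge[\kappa,E_\sH]=0$.

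Second, I would interpret this trivector condition pointwise. At points $m$ where $E_{\sH,m}\neq 0$, the vectors $\kappa_m$ and $E_{\sH,m}$ are linearly independent, since they lie in the complementary vertical and horizontal subspaces (as in the proof of Proposition \ref{prop:MinLowDim}). The vanishing of the wedge then forces $[\kappa,E_\sH]_m\in\Span\{\kappa_m,E_{\sH,m}\}$. On the open set where $E_\sH\neq0$, this gives smooth functions $a,b$ with $[\kappa,E_\sH]=a\,\kappa+b\,E_\sH$. Smoothness follows by pairing with $\tau$ and with a horizontal one-form dual to $E_\sH$. So $\mathcal{D}$ is involutive there, and being regular of rank $2$, it is integrable by Frobenius.

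Third, I would handle the singular locus where $E_\sH$ vanishes, where $\mathcal{D}$ drops to rank one; this is the main obstacle. The cleanest route avoids Frobenius entirely and observes that $\mathcal{D}=\mathrm{Im}(P^\#)$. Indeed $P^\#(\tau)=E_\sH$ and, for any horizontal $\eta$, $P^\#(\eta)=-\eta(E_\sH)\,\kappa$, which produces all of $\Span\{\kappa\}$ at points where $E_\sH\neq0$. The characteristic distribution of a Poisson structure is an integrable Stefan--Sussmann distribution, as recalled in Subsection \ref{subsec:Rec}, so $\mathcal{D}$ is integrable wherever it coincides with $\mathrm{Im}(P^\#)$.

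The delicate point is that at zeros of $E_\sH$ we have $\mathrm{Im}(P^\#_m)=0$ while $\mathcal{D}_m=\Span\{\kappa_m\}$, so the two do not literally agree there. I would therefore interpret $\mathcal{D}$ as the module generated by $\kappa$ and $E_\sH$ and prove involutivity of this module directly: $[\kappa,E_\sH]\in\mathcal{D}$, plus Leibniz. For the bracket itself I would again use $\SN{P,P}=0$, now as a global identity of trivectors. Contracting $\kappa\wedge E_\sH\wedge[\kappa,E_\sH]=0$ with $\tau$ and with horizontal forms shows that the horizontal part of $[\kappa,E_\sH]$ is a $C^\infty$-multiple of $E_\sH$, at least on the dense open set where $E_\sH\neq 0$. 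The vertical part is automatically a multiple of $\kappa$. At the singular points I would then appeal to the Stefan--Sussmann/Hermann criterion for locally finitely generated involutive modules, which gives integrability in the singular-foliation sense.

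If extending the coefficient $b$ smoothly across the zero set turns out to be problematic, I would fall back on the second route. There the leaves are the symplectic leaves of $P$ where $E_\sH\neq0$, together with the $\kappa$-orbits (fibres) through the zero set. I would check directly that these fibres are integral curves of $\mathcal{D}$, since $\mathcal{D}$ restricts to $\Span\{\kappa\}$ there.
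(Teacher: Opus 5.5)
Your argument on the open set $\{E_\mathsf{H}\neq 0\}$ is correct and is essentially the paper's argument. There $\kappa$ and $E_\mathsf{H}$ are independent, so $\kappa\wedge E_\mathsf{H}\wedge[\kappa,E_\mathsf{H}]=0$ gives $[\kappa,E_\mathsf{H}]=a\,\kappa+b\,E_\mathsf{H}$ with smooth $a,b$, and Frobenius applies. The gap is at the zeros of $E_\mathsf{H}$, and neither of your routes closes it.

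\begin{itemize}
\item \emph{Hermann route.} At a zero $m$ of $E_\mathsf{H}$ the trivector identity is vacuous. Yet $[\kappa,E_\mathsf{H}]_m$ is the derivative of $E_\mathsf{H}$ along $\kappa$ at $m$, whose horizontal part need not vanish. So nothing forces $[\kappa,E_\mathsf{H}]_m\in\mathcal{D}_m=\Span\{\kappa_m\}$. The module generated by $\kappa$ and $E_\mathsf{H}$ need not be involutive, so Hermann's criterion has no hypothesis to work with.
\item \emph{Fibre fallback.} The fibre through a zero of $E_\mathsf{H}$ need not stay in the zero set. Wherever it leaves, $\mathcal{D}$ has rank $2$, so the fibre is not an integral manifold of $\mathcal{D}$. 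It also cuts through the rank-$2$ leaves, so your proposed leaves do not partition $M$.
\item \emph{Minor point.} The set $\{E_\mathsf{H}\neq0\}$ need not be dense.
\end{itemize}

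In fact no argument can close the gap, because the statement fails at such points. Take the paper's Example~\ref{exa:CarrPlane}, which explicitly allows $\psi$ to have zeros. Choose $\tau=\rmd t$, $\kappa=\partial_t$, $v^\mathsf{H}=\partial_x$ and $\psi=t$.
\begin{itemize}
\item $P=t\,\partial_t\wedge\partial_x$ is Poisson.
\item It is compatible: symplectic for $t\neq0$ and zero at $t=0$.
\item It is minimal, since $n=1$.
\item $E_\mathsf{H}=t\,\partial_x$.
\end{itemize}
Then $[\kappa,E_\mathsf{H}]=\partial_x\notin\mathcal{D}_{(0,x)}=\Span\{\partial_t\}$. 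No integral manifold passes through $(0,x_0)$: it would be a curve tangent to $\partial_t$ there, hence entering $\{t\neq0\}$, where $\mathcal{D}$ has rank $2$.

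So the proposition holds only on $\{E_\mathsf{H}\neq 0\}$, or globally under an extra hypothesis.
\begin{itemize}
\item If $E_\mathsf{H}$ is nowhere vanishing, your Frobenius step suffices.
\item Poisson-stationarity also suffices. Then $\kappa\wedge[\kappa,E_\mathsf{H}]=\mathcal{L}_\kappa P=0$ forces $[\kappa,E_\mathsf{H}]=\tau([\kappa,E_\mathsf{H}])\,\kappa$ globally. The module is involutive and Hermann's theorem applies. The zero set of $E_\mathsf{H}$ is then a union of fibres, so your fallback picture becomes correct.
\end{itemize}

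The paper's one-line proof has the same gap without saying so: it asserts $[\kappa,E_\mathsf{H}]=f_1\kappa+f_2E_\mathsf{H}$ with global smooth $f_1,f_2$, which is only justified where $\kappa$ and $E_\mathsf{H}$ are independent.
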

\begin{proof}
Using the properties of the Schouten--Nijenhuis bracket (see \eqref{eqn:SNProps}) we observe that 
$$\SN{P,P} = - 2 \, \kappa \wedge E_\mathsf{H} \wedge [\kappa, E_\mathsf{H}] =0\,.$$
For this to be true, it must be the case that $[\kappa, E_\mathsf{H}] = f_1 \, \kappa + f_2 \, E_\mathsf{H}$ for some (possibly zero) functions $f_1, f_2 \in C^\infty(M)$. Thus,   $\mathcal{D}$ is closed under the Lie bracket and therefore integrable. 
\end{proof}
\emph{Casimir functions} are defined as standard, i.e.,
\begin{equation}
\mathsf{Cas}(M) :=  \{f \in C^\infty(M)~~|~~P^\#(\rmd f) =0 \}\,.
\end{equation} 
For a non-trivial minimal Poisson structure we have $P^\#(\rmd f) = \kappa(f)\, E_\mathsf{H} - E_\mathsf{H}(f)\, \kappa =0$. We thus require $\kappa(f) = 0$ and $E_\mathsf{H}(f) =0$.
\begin{observation}
Let $(M, g, \tau , P)$ be a minimal Carrollian--Poisson bundle with $P\neq 0$.  Then the Casimir functions are constant on the leaves of the foliation associated with the integrable distribution $\mathcal{D} = \Span \{ \kappa, E_\mathsf{H} \}$.
\end{observation}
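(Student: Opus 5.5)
The plan is to combine the Casimir computation made just before the statement with the integrability of $\mathcal{D} = \Span\{\kappa, E_\mathsf{H}\}$ from the preceding proposition. Minimality gives $P = \kappa \wedge E_\mathsf{H}$ globally. For $f \in \mathsf{Cas}(M)$ we have
$$0 = P^\#(\rmd f) = \kappa(f)\, E_\mathsf{H} - E_\mathsf{H}(f)\, \kappa\,.$$
Since $\kappa$ is vertical, $E_\mathsf{H}$ is horizontal, and the splitting $\Vect(M) = \Vect_\mathsf{V}(M) \oplus \Vect_\mathsf{H}(M)$ is direct, both coefficients vanish separately. Thus $\kappa(f)=0$ everywhere. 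Also $E_\mathsf{H}(f)=0$ everywhere, trivially so at points where $E_\mathsf{H}$ vanishes. Hence $X(f)=0$ for every local section $X = a\,\kappa + b\,E_\mathsf{H}$ of $\mathcal{D}$, since $X(f) = a\,\kappa(f) + b\,E_\mathsf{H}(f) = 0$.

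Next I pass from infinitesimal to global constancy on a leaf. Let $L$ be a leaf of the foliation of $\mathcal{D}$; it is a connected immersed submanifold whose tangent space at each point is $\mathcal{D}_m$. The rank of $\mathcal{D}$ is $2$ where $E_\mathsf{H}\neq 0$ and $1$ where $E_\mathsf{H}=0$, because $\kappa$ never vanishes. The restriction $f|_L$ is smooth on $L$ with the leaf topology, and its differential satisfies $\rmd(f|_L)_m(v) = v(f) = 0$ for all $v \in \sT_m L = \mathcal{D}_m$. A smooth function with vanishing differential on a connected manifold is constant, so $f$ is constant on $L$.

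The main obstacle is that $\mathcal{D}$ is generally a singular (Stefan--Sussmann) distribution, so I must justify that the leaves exist and are connected immersed submanifolds tangent to $\mathcal{D}$. The preceding proposition only supplies involutivity, $[\kappa, E_\mathsf{H}] = f_1\kappa + f_2 E_\mathsf{H}$, which in the singular case does not by itself give integrability. I would handle this by noting that $\mathcal{D}$ is generated by the two global vector fields $\kappa$ and $E_\mathsf{H}$, a finitely generated involutive module, so by Hermann's theorem (see \cite{Laurent-Gengoux:2025}) it is integrable with leaves the orbits of these fields. Alternatively, I can avoid leaf regularity altogether. Each leaf is the accessibility set of $\kappa$ and $E_\mathsf{H}$, so any two points on a leaf are joined by a finite concatenation of integral curves of $\pm\kappa$ and $\pm E_\mathsf{H}$. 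Along each such curve $\gamma$ we have $\frac{\rmd}{\rmd t} f(\gamma(t)) = \kappa(f)$ or $E_\mathsf{H}(f)$, which is $0$, so $f$ is constant along the whole chain. I would present this flow-based argument as the primary proof since it needs only the two generators.
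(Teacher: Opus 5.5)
\textbf{There is a genuine gap in your first paragraph, and the paper's own justification has the same gap.} The step that fails is ``both coefficients vanish separately.'' The direct splitting only gives the two vector-field identities $\kappa(f)\,E_\mathsf{H}=0$ and $E_\mathsf{H}(f)\,\kappa=0$.

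\begin{itemize}
\item Since $\kappa$ is nowhere zero, the second identity gives $E_\mathsf{H}(f)=0$ everywhere.
\item The first identity gives $\kappa(f)=0$ only on $M_0=\{m : E_{\mathsf{H},m}\neq 0\}=\{m : P_m\neq 0\}$, and hence on $\overline{M_0}$ by continuity.
\end{itemize}

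So you have the roles reversed. At zeros of $E_\mathsf{H}$ it is $\kappa(f)$, not $E_\mathsf{H}(f)$, that is unconstrained. On the interior of the zero set of $P$ the Casimir condition is vacuous. That is exactly where $\mathcal{D}=\Span\{\kappa\}$ and the leaves are integral curves of $\kappa$.

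This is not cosmetic; here is a counterexample. Take the Carrollian plane of Example \ref{exa:CarrPlane} with $\tau=\rmd t$ and $\kappa=\partial_t$. Let $P=\phi(x)\,\partial_t\wedge\partial_x$, where $\phi$ is smooth, vanishes for $x\le 0$ and is positive for $x>0$. Let $f=\chi(x)\,t$, with $\chi$ smooth, supported in $\{x\le -1\}$ and not identically zero.

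\begin{itemize}
\item Then $P^\#(\rmd f)=\phi\,(\chi\,\partial_x - t\chi'\,\partial_t)=0$, so $f$ is a Casimir.
\item Also $[\partial_t,\phi\,\partial_x]=0$, so $\mathcal{D}$ is genuinely integrable. Its leaves are $\{x>0\}$ and the lines $\{x=c\}$ for $c\le 0$.
\item Yet $f=\chi(c)\,t$ is not constant on $\{x=c\}$ whenever $\chi(c)\neq 0$.
\end{itemize}

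So the Observation is false when ``$P\neq 0$'' is read as ``not identically zero'' (the paper's sense of non-trivial). The paper's one-line justification before the statement (``we thus require $\kappa(f)=0$ and $E_\mathsf{H}(f)=0$'') has exactly the same gap as yours.

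\textbf{What is missing, and what survives.} The missing hypothesis is density of $M_0$, which is the paper's ``almost regular'' condition. Equivalently, the zero set of $E_\mathsf{H}$ should have empty interior, as in the Kerr and $\mathscr{I}^+$ examples. With that assumption, continuity gives $\kappa(f)=0$ everywhere, and your flow/accessibility argument then completes the proof correctly. Without it, what you can prove is weaker: $f$ is constant on every leaf contained in $\overline{M_0}$, in particular on every two-dimensional leaf, since these lie in $M_0$.

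The leafwise part of your proposal is sound. You are also right to prefer the orbit argument over Hermann's theorem. The computation $\SN{P,P}=0$ only produces smooth $f_1,f_2$ with $[\kappa,E_\mathsf{H}]=f_1\kappa+f_2E_\mathsf{H}$ away from zeros of $E_\mathsf{H}$. For example, $E_\mathsf{H}=(t^2+x^2)\,\partial_x$ gives $[\kappa,E_\mathsf{H}]=2t\,\partial_x$, which is not in the module generated by $\kappa$ and $E_\mathsf{H}$.
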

\begin{proposition}
Let $(M, g, \tau, P)$ be a minimal Carrollian--Poisson bundle such that $g(E_\mathsf{H}, E_\mathsf{H})$ is a Casimir function, i.e., it is constant on the characteristic distribution $\mathrm{Im}(P^\#) = \Span \{\kappa, E_\mathsf{H} \}$. If the fundamental (Carroll) vector field $\kappa$ is Killing, i.e., $\mathcal{L}_\kappa g =0$, then
\begin{enumerate}[i)]
\item $\mathcal{L}_\kappa E_\mathsf{H} \in \Vect_\mathsf{V}(M)$;\label{propi}
\item $\mathcal{L}_\kappa P =0$; \label{propii}
\item $\mathcal{L}_{X_\mathsf{V}}g =0$; and  \label{propiii}
\item $\mathcal{L}_{X_\mathsf{V}}P =  -\kappa(f)\, P$, \label{propiv}
\end{enumerate}
for all $X_\mathsf{V} = f \, \kappa \in \Vect_\mathsf{V}(M)$. 
\end{proposition}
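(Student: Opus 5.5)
The plan is to work throughout with the minimal decomposition $P = \kappa \wedge E_\mathsf{H}$ from Proposition \ref{prop:GlobDecom} (with $P_\mathsf{H}=0$) and to reduce everything to a single fact: the horizontal part of $[\kappa, E_\mathsf{H}]$ vanishes. Once \ref{propi} is known, items \ref{propii}--\ref{propiv} should follow from standard identities for Lie derivatives and the Schouten--Nijenhuis bracket \eqref{eqn:SNProps}.

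\textbf{Step 1 (item \ref{propi}).} Write $[\kappa, E_\mathsf{H}] = a\,\kappa + Y_\mathsf{H}$, where $Y_\mathsf{H}$ is the horizontal component determined by $\tau$.
\begin{itemize}
\item From $\SN{P,P} = -2\,\kappa\wedge E_\mathsf{H}\wedge[\kappa,E_\mathsf{H}] = 0$ we get $\kappa\wedge E_\mathsf{H}\wedge Y_\mathsf{H}=0$. Hence on the open set $U=\{E_\mathsf{H}\neq 0\}$ we have $Y_\mathsf{H} = b\,E_\mathsf{H}$ pointwise, with $b$ smooth on $U$.
\item Since $g(E_\mathsf{H},E_\mathsf{H})$ is a Casimir, $\kappa\big(g(E_\mathsf{H},E_\mathsf{H})\big)=0$. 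Expanding with the Killing condition gives
$$0=\kappa\big(g(E_\mathsf{H},E_\mathsf{H})\big) = (\mathcal{L}_\kappa g)(E_\mathsf{H},E_\mathsf{H}) + 2\,g([\kappa,E_\mathsf{H}],E_\mathsf{H}) = 2\,g(Y_\mathsf{H},E_\mathsf{H}),$$
where we also used that $\kappa$ spans $\ker(g)$.
\item On $U$ this reads $2b\,g(E_\mathsf{H},E_\mathsf{H})=0$. Positivity of $g$ on horizontal vectors then forces $b=0$, so $Y_\mathsf{H}=0$ on $U$.
\item On the interior of the zero set of $E_\mathsf{H}$, $[\kappa,E_\mathsf{H}]=0$ trivially, because $E_\mathsf{H}$ vanishes on an open set there.
\item The union of $U$ and this interior is dense in $M$, and $Y_\mathsf{H}$ is continuous, so $Y_\mathsf{H}=0$ everywhere. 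This proves $\mathcal{L}_\kappa E_\mathsf{H} = a\,\kappa \in \Vect_\mathsf{V}(M)$.
\end{itemize}
I expect this step to be the main obstacle. The earlier integrability argument only gives functions $f_1,f_2$ away from the zero set of $E_\mathsf{H}$, so the behaviour at boundary points of that zero set has to be handled by the continuity argument above.

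\textbf{Step 2 (item \ref{propii}).} The Lie derivative is a derivation of the wedge product, so
$$\mathcal{L}_\kappa P = [\kappa,\kappa]\wedge E_\mathsf{H} + \kappa\wedge[\kappa,E_\mathsf{H}] = \kappa\wedge a\,\kappa = 0.$$

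\textbf{Step 3 (item \ref{propiii}).} For $X_\mathsf{V}=f\kappa$ I will use the identity
$$(\mathcal{L}_{f\kappa}g)(Y,Z) = f\,(\mathcal{L}_\kappa g)(Y,Z) + Y(f)\,g(\kappa,Z) + Z(f)\,g(Y,\kappa).$$
The first term vanishes by the Killing assumption, and the last two vanish because $\kappa\in\ker(g)$. Hence $\mathcal{L}_{X_\mathsf{V}}g=0$.

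\textbf{Step 4 (item \ref{propiv}).} Here I use the rule $\SN{f\kappa,P} = f\,\SN{\kappa,P} + (\text{sign})\,\kappa\wedge P^\#(\rmd f)$, which follows from the graded Leibniz rule in \eqref{eqn:SNProps}.
\begin{itemize}
\item The first term vanishes by Step 2.
\item For the second term, $P^\#(\rmd f) = \kappa(f)E_\mathsf{H} - E_\mathsf{H}(f)\kappa$, so $\kappa\wedge P^\#(\rmd f) = \kappa(f)\,\kappa\wedge E_\mathsf{H} = \kappa(f)\,P$.
\end{itemize}
What remains is to fix the sign. I will do this by a careful expansion consistent with the paper's conventions for $P^\#$ and $i_X$, or equivalently by checking directly that
$$\mathcal{L}_{f\kappa}(\kappa\wedge E_\mathsf{H}) = [f\kappa,\kappa]\wedge E_\mathsf{H} + \kappa\wedge[f\kappa,E_\mathsf{H}].$$
Here $[f\kappa,\kappa] = -\kappa(f)\kappa$ contributes $-\kappa(f)P$. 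The second bracket is $[f\kappa,E_\mathsf{H}] = f\,a\,\kappa - E_\mathsf{H}(f)\kappa$, which is vertical, so its wedge with $\kappa$ vanishes. Together these give $\mathcal{L}_{X_\mathsf{V}}P = -\kappa(f)\,P$.
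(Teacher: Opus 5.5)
Your proof is correct and follows the paper's route. Items ii)--iv) are the same direct Lie-derivative computations, including the expansion of $[f\kappa,\kappa]$ and $[f\kappa,E_\mathsf{H}]$ you settle on for iv), and item i) rests on the same Killing-plus-Casimir identity $g([\kappa,E_\mathsf{H}],E_\mathsf{H})=0$. The one difference is in your favour: the paper goes from that identity straight to verticality of $[\kappa,E_\mathsf{H}]$, but the identity alone only gives $g$-orthogonality to $E_\mathsf{H}$. Your use of $\SN{P,P}=0$ to force the horizontal part into $\Span\{E_\mathsf{H}\}$ on $\{E_\mathsf{H}\neq 0\}$, followed by continuity across the boundary of the zero set, is exactly what is needed to close that step.
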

\begin{proof}
The Killing condition can be written as
$$\kappa \big(g(X,Y) \big) = g([\kappa, X], Y) + g(X, [\kappa, Y])\,.$$
Then setting $X=Y = E_\mathsf{H}$, and $\kappa \big(g(E_\mathsf{H},E_\mathsf{H}) \big)=0$ via the assumptions of the lemma, we obtain  $g([\kappa, E_\mathsf{H}], E_\mathsf{H})=0$.  As the kernel of $g$ is precisely $\Vect_\mathsf{V}(M)$, we have $\mathcal{L}_\kappa E_\mathsf{H} = [\kappa, E_\mathsf{H}] \in \Vect_\mathsf{V}(M)$. Thus, \ref{propi}) is established. Directly we have $\mathcal{L}_\kappa P = \mathcal{L}_\kappa \kappa \wedge E_\mathsf{H} + \kappa \wedge \mathcal{L}_\kappa E_\mathsf{H} =0$ which establishes \ref{propii}.   \par 
To establish \ref{propiii}), a direct calculation is required. Explicitly, from the definition of the Lie derivative of the metric and using $\kappa \in \ker(g)$
\begin{align*}
\big(\mathcal{L}_{X_\mathsf{V}}g\big)(Y,Z) &= f \, \kappa\big( g(Y,Z)\big) - g([f\, \kappa,Y],Z) - g (Y, [f \,\kappa, Z])\\
&= f \, \kappa\big( g(Y,Z)\big) - g(f [ \kappa,Y],Z) - g (Y, f [\kappa, Z])  \\
&= f \, \big( \mathcal{L}_\kappa g\big )(Y,Z) =0\, .
\end{align*}
Similarly, 
\begin{align*}
\mathcal{L}_{X_\mathsf{V}}P &= [f \, \kappa, \kappa] \wedge E_\mathsf{H} + \kappa \wedge [f \, \kappa, E_\mathsf{H}]\\
&= - \kappa(f)\, \kappa \wedge E_\mathsf{H} + f \, \kappa \wedge \mathcal{L}_\kappa E_\mathsf{H}\,.
\end{align*}
We then observe that \ref{propi}) implies $\mathcal{L}_{X_\mathsf{V}}P = - \kappa(f)\, \kappa \wedge E_\mathsf{H} = - \kappa(f)\, P$, and so \ref{propiv}) is established. 
\end{proof}
Returning to Carrollian one-forms, we have the following algebraic result.
\begin{lemma}\label{lem:LRPair}
Let $(M, g, \tau, P)$ be a Carrollian--Poisson bundle and let  $\mathcal{A}_\mathsf{C}(M)$ be the $C^\infty(M)$-module of Carrollian one-forms. Then $(\mathcal{A}_\mathsf{C}(M), [-,-], P^\#)$  is a Lie--Rinehart pair.
\end{lemma}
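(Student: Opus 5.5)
The plan is to realise $(\mathcal{A}_\mathsf{C}(M), [-,-], P^\#)$ as a sub-structure of the cotangent Lie algebroid $(\sT^*M, [-,-], P^\#)$ of the Poisson manifold. Then antisymmetry, the Jacobi identity and the Leibniz rule are all inherited from the Koszul bracket, and the only real content is that $\mathcal{A}_\mathsf{C}(M)$ is a $C^\infty(M)$-submodule closed under the Koszul bracket. The submodule property is immediate, since both defining conditions are $C^\infty(M)$-linear: if $\eta \in \Omega^1_\mathsf{H}(M)$ then $f\eta \in \Omega^1_\mathsf{H}(M)$, and $P^\#(f\eta) = f P^\#(\eta) \in \ker(g)$. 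The anchor is the restriction of $P^\#$, which takes values in $\Vect_\mathsf{V}(M) \subset \Vect(M)$. Once closure is established, the Leibniz rule $[\eta, f\xi] = P^\#(\eta)(f)\,\xi + f[\eta,\xi]$ holds verbatim, so we obtain a Lie--Rinehart pair.

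For closure, take $\eta, \xi \in \mathcal{A}_\mathsf{C}(M)$. Since $\ker(g) = \Vect_\mathsf{V}(M)$ is spanned by $\kappa$, write $P^\#(\eta) = a\,\kappa$ and $P^\#(\xi) = b\,\kappa$ with $a,b \in C^\infty(M)$. First I check verticality of the image. Because $P^\#$ is a Lie algebra homomorphism (it is the anchor of $\sT^*M$),
$$P^\#([\eta,\xi]) = [a\kappa, b\kappa] = \big(a\,\kappa(b) - b\,\kappa(a)\big)\kappa \in \ker(g)\,.$$

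The step I expect to need the most care is showing that $[\eta,\xi]$ is again horizontal, i.e.\ $[\eta,\xi](\kappa) = 0$. I will evaluate the three terms of the Koszul bracket on $\kappa$ separately.

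\emph{Exact term.} Since $\xi$ is horizontal, $P(\eta,\xi) = \xi(P^\#(\eta)) = a\,\xi(\kappa) = 0$. Hence $\rmd(P(\eta,\xi)) = 0$.

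\emph{Lie derivative terms.} Using $(\mathcal{L}_X \xi)(\kappa) = X(\xi(\kappa)) - \xi([X,\kappa])$, we get
$$(\mathcal{L}_{a\kappa}\xi)(\kappa) = a\,\kappa\big(\xi(\kappa)\big) - \xi\big([a\kappa,\kappa]\big) = 0 + \kappa(a)\,\xi(\kappa) = 0\,.$$
The same computation gives $(\mathcal{L}_{b\kappa}\eta)(\kappa) = 0$.

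Therefore $[\eta,\xi] \in \Omega^1_\mathsf{H}(M)$. Combined with the verticality of $P^\#([\eta,\xi])$, this gives $[\eta,\xi] \in \mathcal{A}_\mathsf{C}(M)$.

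Throughout, I will keep track of the paper's sign convention $i_X\alpha = -\alpha(X)$. It only affects overall signs, not whether the expressions above vanish, so the argument is insensitive to it. Note also that nothing here uses the compatibility condition or regularity of $P$; only $\ker(g) = \Span\{\kappa\}$ and the definition of $\mathcal{A}_\mathsf{C}(M)$ enter.
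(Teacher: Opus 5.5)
Your proof is correct and follows the paper's argument essentially step for step. Verticality of $P^\#([\eta,\xi])$ comes from the anchor being a Lie algebra homomorphism, the exact term vanishes because $P(\eta,\xi)=\xi(P^\#(\eta))=0$, and $(\mathcal{L}_{P^\#(\eta)}\xi)(\kappa)=0$ because $[P^\#(\eta),\kappa]$ is vertical. Your explicit check that $\mathcal{A}_\mathsf{C}(M)$ is a $C^\infty(M)$-submodule, and your remark that the compatibility condition is never used, are small additions the paper leaves implicit.
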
 
\begin{proof}  We need to establish that the Koszul bracket closes on Carrollian one-forms. Let $\eta, \theta \in \mathcal{A}_\mathsf{C}(M)$ be arbitrary.
\begin{enumerate}[i)]
\item As the anchor map is a Lie algebra homomorphism, we observe that $P^\#[\eta, \theta] = [P^\#(\eta), P^\#(\theta)] \in \Vect_\mathsf{V}(M)$ as the Lie bracket closes on vertical vector fields. Thus, the image of the bracket under the anchor is a vertical vector field. 
\item To establish that $[\eta, \theta]$ is horizontal, note $P(\eta, \theta)= \theta(P^\#(\eta))=0$ as $\theta$ is horizontal and $P^\#(\eta)$ is a vertical vector field.  Then the Koszul bracket simplifies to
$$[\eta, \theta] = \mathcal{L}_{P^\#(\eta)}\theta - \mathcal{L}_{P^\#(\theta)}\eta\,.$$
Observe that 
$$\mathcal{L}_{P^\#(\eta)}(\theta(\kappa)) = (\mathcal{L}_{P^\#(\eta)}\theta)( \kappa )+ \theta([P^\#(\eta), \kappa])=0 \,.$$
As the Lie bracket closes on vertical vector fields and $\theta$ is horizontal, we have that $(\mathcal{L}_{P^\#(\eta)}\theta)(\kappa) =0$. Thus, we see that $[\eta, \theta](\kappa) =0$, and we deduce that $[\theta, \eta] \in \Omega^1_\mathsf{H}(M)$.  
\end{enumerate}
Putting the two results together, we conclude that the Koszul bracket closes on Carrollian one-forms.
\end{proof}
The Lie bracket on $\mathcal{A}_\mathsf{C}(M)$ reduces to the simple form $[\eta , \theta] = \mathcal{L}_{P^\#(\eta)} \theta - \mathcal{L}_{P^\#(\theta)} \eta$. This bracket is the difference of the vertical flow generated by the one-forms via the anchor acting on the other. More explicitly, using the definition of the Lie derivative, it can be shown that 
\begin{subequations}
\begin{align}
& P^\#(\eta) = - \eta(E_\mathsf{H})\, \kappa\,,\\
&[\eta, \theta] = - \eta(E_\mathsf{H}) i_\kappa (\rmd \theta) + \theta(E_\mathsf{H}) i_\kappa (\rmd \eta)\,.
\end{align}
\end{subequations}   
Note that $P_\mathsf{H}$ plays no role in the anchor or bracket.
\begin{theorem}\label{the:CarrCotLieAlg}
Let $(M, g, \tau, P)$ be a Carrollian--Poisson bundle.  The $C^\infty(M)$-module of Carrollian one-forms  $\mathcal{A}_\mathsf{C}(M)$ can, up to isomorphism,  be identified with the $C^\infty(M)$-module of (global) sections  (via the Serre--Swan theorem) of a  Lie algebroid $A \subset \sT^*M$ if and only if $P$ is a regular Poisson structure, i.e., $(M, g, \tau, P)$ is a regular Carrollian--Poisson bundle.
\end{theorem}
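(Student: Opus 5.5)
The plan is to reduce the statement to a pointwise rank computation and then apply the Serre--Swan theorem, with the Lie algebroid structure coming from Lemma~\ref{lem:LRPair}. For each $m\in M$ set
$$A_m:=\{\eta\in \mathsf{H}^*_mM~|~P^\#_m(\eta)\in \mathsf{V}_mM\}\subset \sT^*_mM\,,$$
where $\mathsf{H}^*_mM$ is the annihilator of $\kappa_m$. By Serre--Swan, $\mathcal{A}_\mathsf{C}(M)$ is (up to isomorphism) the module of sections of a vector subbundle $A\subset \sT^*M$ exactly when $m\mapsto \dim A_m$ is locally constant, so that the $A_m$ glue to a smooth subbundle. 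When this holds, Lemma~\ref{lem:LRPair} shows that the Koszul bracket closes on $\Sec(A)=\mathcal{A}_\mathsf{C}(M)$. The anchor $P^\#|_A$ is the restriction of a bundle map, and the Leibniz rule and Jacobi identity are inherited from $\sT^*M$. Hence $A$ is a Lie subalgebroid, and the theorem reduces to showing that $\dim A_m$ is constant if and only if $\Rank P_m$ is constant.

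\textbf{Computing $\dim A_m$.} Use Proposition~\ref{prop:GlobDecom}. For horizontal $\eta$,
$$P^\#(\eta)=-\eta(E_\mathsf{H})\,\kappa+P_\mathsf{H}^\#(\eta)\,,$$
and $P_\mathsf{H}^\#(\eta)$ is horizontal. So $P^\#(\eta)$ is vertical exactly when $P_\mathsf{H}^\#(\eta)=0$, which gives $A_m=\ker\big(P^\#_{\mathsf{H},m}|_{\mathsf{H}^*_m}\big)$ and
$$\dim A_m=n-\Rank P_{\mathsf{H},m}\,.$$

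\textbf{Linear-algebra lemma.} Next I relate $\Rank P_m$ to $\Rank P_{\mathsf{H},m}$ using the compatibility condition. The claim is: if $P_m\neq 0$, then $\Rank P_m=\Rank P_{\mathsf{H},m}+2$. To prove it, evaluate $P^\#_m$ on $\tau$ and on horizontal covectors, which gives
$$\mathrm{Im}(P^\#_m)=\Span\{E_{\mathsf{H},m}\}+\Span\{-\eta(E_\mathsf{H})\kappa+P^\#_\mathsf{H}(\eta)\}\,.$$
Compatibility says $\kappa_m$ lies in this image. Writing $\kappa_m=P^\#_m(a\tau+\eta)$ forces $\eta(E_{\mathsf{H},m})=-1$ and $aE_{\mathsf{H},m}+P^\#_{\mathsf{H},m}(\eta)=0$. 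From this I expect to deduce that
$$\mathrm{Im}(P^\#_m)=\Span\{\kappa_m\}\oplus\big(\Span\{E_{\mathsf{H},m}\}+\mathrm{Im}\,P^\#_{\mathsf{H},m}\big)\,,$$
and then, by a parity argument (ranks of skew forms are even), that $E_{\mathsf{H},m}\notin \mathrm{Im}\,P^\#_{\mathsf{H},m}$. The cleanest route is probably to choose a basis of $\mathsf{H}^*_m$ adapted to $\ker P^\#_{\mathsf{H},m}$ and write $P_m$ in block form. At points where $P_m=0$ one has $P_{\mathsf{H},m}=0$, hence $\dim A_m=n$.

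\textbf{Direction ``if''.} If $P$ is regular of rank $2k>0$, then $P$ never vanishes, so $\Rank P_{\mathsf{H},m}=2k-2$ everywhere and $\dim A_m=n-2k+2$ is constant. If $P\equiv 0$, then $A=\mathsf{H}^*M$. In either case the previous paragraphs give the subbundle and its Lie algebroid structure.

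\textbf{Direction ``only if'' (main obstacle).} Constancy of $\dim A_m$ gives constant $\Rank P_\mathsf{H}$. If that rank is positive, $P$ cannot vanish anywhere, and the lemma gives constant $\Rank P$. The delicate case is $\Rank P_\mathsf{H}\equiv 0$, i.e.\ minimal structures with zeros of $E_\mathsf{H}$. There $A=\mathsf{H}^*M$ still has constant rank, so rank considerations alone do not exclude it. I would first try to rule this case out using smoothness of the anchor and bracket near the zero set, or by checking whether the paper intends the identification to respect the full Koszul structure (for example, whether the bracket formula is required to be well behaved near zeros of $E_\mathsf{H}$). If that fails, I would isolate this minimal singular case explicitly as the only possible exception, since all other cases follow from the rank count above.
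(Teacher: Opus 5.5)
\textbf{The case you flag cannot be ruled out: it is a counterexample.} Minimal structures whose $E_\mathsf{H}$ has zeros are not an artefact of your method. They are genuine counterexamples to the ``only if'' half of the statement, so no argument can exclude them, and your fallback of isolating this case is the right move.

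Take Example~\ref{exa:CarrPlane} with $\psi$ vanishing somewhere but not identically, or the Kerr horizon of Example~\ref{exa:KerBH}, where $P=\kappa\wedge u^\mathsf{H}$ vanishes at the poles. These structures are minimal, so $P^\#(\eta)=-\eta(E_\mathsf{H})\,\kappa$ for every horizontal $\eta$. Hence $\mathcal{A}_\mathsf{C}(M)=\Omega^1_\mathsf{H}(M)=\Sec(\mathsf{H}^*M)$. Lemma~\ref{lem:LRPair} then makes the smooth subbundle $\mathsf{H}^*M\subset\sT^*M$ a Lie subalgebroid, with anchor $\eta\mapsto-\eta(E_\mathsf{H})\,\kappa$; an anchor is allowed to vanish at points. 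No smoothness consideration about the bracket near the zero set can obstruct this. Yet $P$ takes both rank $2$ and rank $0$, so it is not regular.

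Your own count shows this is the only way the pointwise criterion escapes regularity. You have $\dim A_m=n$ where $P_m=0$ and $\dim A_m=n+2-\Rank P_m$ elsewhere. So $\dim A_m$ is constant if and only if $P$ is regular or $\Rank P_m\leq 2$ everywhere, i.e.\ $P_\mathsf{H}=0$.

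The paper's proof passes over exactly this case. It asserts that if $P^\#_m$ is the zero map somewhere then $P$ is trivial ``as the fibres must have the same dimension''. But its own rank--nullity formula gives $\dim A_m=\dim\ker P^\#_m+1=(n+1-2)+1=n$ at rank-$2$ points, the same as at zeros. The paper even notes that $\mathcal{A}_\mathsf{C}(M)\cong\Omega^1_\mathsf{H}(M)$ for minimal structures. What can actually be proved is ``the fibres $A_m$ form a subbundle $\Leftrightarrow$ $P$ is regular or minimal''. Your ``if'' direction is fine as it stands.

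Your unfinished linear-algebra lemma is true and needs no parity argument. Suppose $E_{\mathsf{H},m}=P^\#_\mathsf{H}(\zeta)$. Skew-symmetry together with $P^\#_\mathsf{H}(\eta)=-a\,E_{\mathsf{H},m}$ gives $\eta(E_{\mathsf{H},m})=P_\mathsf{H}(\zeta,\eta)=a\,P_\mathsf{H}(\zeta,\zeta)=0$, contradicting $\eta(E_{\mathsf{H},m})=-1$. The paper gets the same count more directly by rank--nullity. That route implicitly uses the fact that any $\beta$ with $P^\#_m(\beta)$ vertical already annihilates $\kappa_m$.

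\textbf{A second gap in your opening reduction.} The paper makes the same move. The implication ``$\mathcal{A}_\mathsf{C}(M)$ is the module of sections of a subbundle $\Rightarrow$ $\dim A_m$ is locally constant'' fails for the pointwise sets $A_m$. Smooth sections cannot detect a jump of the pointwise kernel along a set with empty interior.

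For instance, work on $\R\times\R^3$ with $\kappa=\partial_t$, $\tau=\rmd t$ and $g=\rmd x\otimes\rmd x+\rmd y\otimes\rmd y+\rmd z\otimes\rmd z$. The bivector $P=\partial_t\wedge\partial_x+y\,\partial_y\wedge\partial_z$ is Poisson and compatible, since $\kappa=P^\#(-\rmd x)$ everywhere. It has rank $4$ off $\{y=0\}$ and rank $2$ on it, so $\dim A_m$ jumps from $1$ to $3$. However, $P^\#(a\,\rmd x+b\,\rmd y+c\,\rmd z)=-a\,\partial_t+y\,(b\,\partial_z-c\,\partial_y)$ is vertical iff $yb=yc=0$, which forces $b=c=0$ by continuity. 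So $\mathcal{A}_\mathsf{C}(M)=C^\infty(M)\,\rmd x$ is the module of sections of the Lie subalgebroid $\Span\{\rmd x\}$, although $P$ is not regular.

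Under the literal module-theoretic reading, then, the ``only if'' direction fails even beyond minimal structures. A correct equivalence can only be formulated for the pointwise family $\{A_m\}$, and there it is the ``regular or minimal'' criterion above.
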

\begin{proof}
If $\mathcal{A}_\mathsf{C}(M)$ are the sections of a vector bundle, it must be the case that the fibres 
$$A_m := \{ \eta_m \in \mathsf{H}^*_m M ~|~ P_m^\#(\alpha_m)\in \mathsf{V}_m M \}\,,$$
have the same (finite) dimension for all points $m \in M$. \par 
If $P_m^\#$ is the zero map, implying that $P$ is the trivial Poisson structure as the fibres must have the same dimension, then $A\Diffeo  \mathsf{H}^* M$.  The Lie algebroid structure follows from Lemma \ref{lem:LRPair}, and in this case, we have the trivial Lie algebroid structure, i.e., the bracket and anchor are zero maps. \par 
Assuming that the Poisson structure is non-singular, i.e, the rank does not drop to zero, then directly from the Rank-Nullity Theorem, we have
$$\dim\big(A_m\big) = \dim\big(\ker(P_m^\#) \big) + \dim \big(\ker(g_m)\cap \mathrm{Im}(P_m^\#)   \big)\,.$$ 
The compatibility condition $\ker_m(g) \subset \mathrm{Im}(P_m^\#)$ implies $\big(\ker(g_m)\cap \mathrm{Im}(P_m^\#)\big) = \ker(g_m)$, which is one-dimensional for all points $m \in M$.  Thus, for $\dim(A_m)$ to be constant for all points, it must be the case that $\dim \big(\ker(P_m^\#) \big)$ is constant. This is equivalent to the definition of a regular Poisson structure. The Lie algebroid structure follows from Lemma \ref{lem:LRPair}, i.e., the sections come with the structure of a Lie--Rinehart pair.
\end{proof}
\begin{definition}\label{def:NullBund}
Let $(M, g, \tau, P)$ be a regular Carrollian--Poisson bundle. The Lie algebroid $(A, [-,-], P^\#)$, such that $\Sec(A) \cong \mathcal{A}_\mathsf{C}(M)$ we refer to as the \emph{Carrollian cotangent Lie algebroid}.
\end{definition}
Note that the Carrollian cotangent Lie algebroid is only defined for regular Carrollian--Poisson bundles. Without the regularity, we only have the algebraic structure of Carrollian one-forms. Using the clock form, we have  $\sT^*M \Diffeo \mathsf{V}^*M \oplus \mathsf{H}^*M$,  we see that $A$ is a vector subbundle of $\mathsf{H}^*M$ when the rank of $P$ is non-zero.  We will denote the vector bundle structure as $\pi_A : A \rightarrow M$. Elements of $A$ we interpret as Carrollian momenta. Observe that, via the rank of a Poisson structure being even, the range of the rank of $A$  is 
\begin{description}
\item[$\mathbf{n}$ odd] $~\Rank(A) \in \{ n, n-2, n-4, \cdots , 1\}$,
\item[$\mathbf{n}$ even] $\Rank(A) \in \{ n, n-2, n-4, \cdots , 2\}$.
\end{description}
\begin{observation}
Let $(M, g, \tau, P)$ be a minimal regular Carrollian--Poisson bundle with $P$ non-trivial.  As $P$ is of constant rank $2$, then $\Rank(A) =n$,  and  $A \simeq \mathsf{H}^*M$.
\end{observation}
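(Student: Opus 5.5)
Since $P$ is minimal we have $P_\mathsf{H}=0$, so $P=\kappa\wedge E_\mathsf{H}$ globally (Proposition \ref{prop:GlobDecom}).

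The first step is to show that $E_\mathsf{H}$ is nowhere vanishing. Because $P$ is regular and non-trivial, its rank is a nonzero constant. By the argument in the proof of Proposition \ref{prop:MinLowDim}, the rank of $P^\#_m$ is $2$ where $E_{\mathsf{H},m}\neq 0$ and $0$ where $E_{\mathsf{H},m}=0$. The constant rank must therefore be $2$, and $E_\mathsf{H}$ vanishes nowhere.

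The second step computes the fibre $A_m$ pointwise. For any horizontal covector $\eta_m\in\mathsf{H}^*_mM$, the same computation as for Carrollian one-forms gives
$$P^\#_m(\eta_m)=-\eta_m(E_{\mathsf{H},m})\,\kappa_m+P^\#_{\mathsf{H},m}(\eta_m)=-\eta_m(E_{\mathsf{H},m})\,\kappa_m\in\mathsf{V}_mM.$$
So every horizontal covector satisfies the defining condition of $A_m$, i.e.\ $A_m=\mathsf{H}^*_mM$. This matches the remark that $\mathcal{A}_\mathsf{C}(M)\cong\Omega^1_\mathsf{H}(M)$ for minimal structures. As a consistency check, use the dimension count from the proof of Theorem \ref{the:CarrCotLieAlg}. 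On $\sT^*_mM$, which has dimension $n+1$, the kernel of $P^\#_m$ has dimension $n+1-2=n-1$; adding the one-dimensional $\ker(g_m)$ gives $\dim A_m=n$, which agrees with $\dim\mathsf{H}^*_mM=n$.

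The final step passes from fibres to bundles. Since $A$ is the subbundle of $\mathsf{H}^*M$ whose fibres are all of $\mathsf{H}^*_mM$, and both bundles have constant rank $n$, the inclusion $A\hookrightarrow\mathsf{H}^*M$ is a bundle isomorphism; equivalently, on sections $\Sec(A)\cong\mathcal{A}_\mathsf{C}(M)=\Omega^1_\mathsf{H}(M)$. The only point needing care is the first step: if $E_\mathsf{H}$ vanished somewhere, the rank would drop to zero there, contradicting regularity together with non-triviality. The rest is bookkeeping.
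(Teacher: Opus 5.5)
Your proposal is correct and follows essentially the paper's reasoning: minimality gives $P^\#(\Omega^1_\mathsf{H}(M))\subseteq\ker(g)$, so $A=\mathsf{H}^*M$, and the rank--nullity count $(n+1-2)+1=n$ from the proof of Theorem~\ref{the:CarrCotLieAlg} confirms $\Rank(A)=n$. Your first step (that $E_\mathsf{H}$ vanishes nowhere) is needed only to justify the constant rank $2$, since the pointwise identity $A_m=\mathsf{H}^*_mM$ holds for any minimal structure.
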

\begin{lemma}\label{lem:SymFol}
Let $(M, g, \tau, P)$ be a Carrollian--Poisson bundle.  Let $m \in M$ be a point such that the rank of $P$ at $m$ is non-zero, then the fibre $\mathcal{F}_{\pi(m)} \Diffeo \R$ of the principal $\R$-bundle  $\pi : M \rightarrow \Sigma$  is contained in the symplectic leaf $S$ containing the point $m$.
\end{lemma}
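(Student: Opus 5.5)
The plan is a connectedness argument along the fibre, using the integral curve of $\kappa$ through $m$. Since $\kappa$ is vertical, nowhere vanishing and $\tau(\kappa)=1$, it is a non-vanishing multiple of the generator of the principal $\R$-action. Its maximal integral curve $\gamma$ with $\gamma(0)=m$ therefore parametrises the whole fibre $\mathcal{F}_{\pi(m)}\Diffeo\R$. Let $S$ be the symplectic leaf through $m$, and set
$$I:=\{t \in \R ~|~ \gamma(t)\in S\}\,.$$
Then $0\in I$, and the goal is to show that $I$ is open and closed in $\R$. Connectedness of $\R$ then gives $I=\R$, i.e.\ $\mathcal{F}_{\pi(m)}\subset S$.

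\emph{Openness.} Take $t_0\in I$. The rank of $P$ is constant along $S$, so it equals $\dim S$, which is the non-zero rank at $m$. By lower semicontinuity of the rank, $P$ has non-zero rank on a neighbourhood $U$ of $\gamma(t_0)$. The compatibility condition of Definition \ref{def:CarrPoisBund} (equivalently Observation \ref{obs:PHashSur}) then gives $\kappa_p\in \ker_p(g)\subset\mathrm{Im}(P^\#_p)$ for all $p\in U$. So on $U$ the vector field $\kappa$ is tangent to the characteristic distribution. The characteristic distribution is an integrable Stefan--Sussmann distribution, so the flow of a vector field everywhere tangent to it preserves its leaves. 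I would make this precise with a Weinstein splitting chart around $\gamma(t_0)$: in such a chart $\kappa$ has no component in the transverse directions on the plaque, so $\gamma(t)$ stays in the plaque of $S$ for $|t-t_0|$ small. Hence $I$ is open.

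\emph{Closedness.} Take $t_n\in I$ with $t_n\to t_*$ and put $p=\gamma(t_*)$. If the rank of $P$ at $p$ is non-zero, the openness argument applied at $p$ to its own leaf $S_p$ shows that $\gamma(t)\in S_p$ for all $t$ near $t_*$. In particular $\gamma(t_n)\in S_p\cap S$ for large $n$. Leaves are either disjoint or equal, so $S_p=S$ and $t_*\in I$.

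The main obstacle is excluding the case where the rank of $P$ drops to zero at $p$. Rank is constant on a leaf, so every $\gamma(t_n)$ has rank $\dim S>0$. What is needed is a reason why the limit of points of $S$ along the orbit of $\kappa$ cannot be a zero of $P$. My first attempt would use the local normal form of $P$ near a zero: by the decomposition $P=\kappa\wedge E_\sH+P_\sH$ of Proposition \ref{prop:GlobDecom}, all of $E_\sH$, $P_\sH$ would have to vanish at $p$. I would then try to show that the vertical flow, which is tangent to leaves on the open dense-in-$S$ region, cannot reach such a point in finite time.

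I am not confident this step goes through, and I should record that it appears to fail. On the Carrollian plane of Example \ref{exa:CarrPlane}, take $\psi$ equal to the fibre coordinate. Then the zero set of $P$ is a section that every fibre crosses, and the leaves through non-zero points are open half-planes, so a fibre is not contained in the leaf through $m$. This is a concrete counterexample to the lemma as worded.

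The argument therefore really establishes the conclusion under the additional assumption that the rank of $P$ is non-zero at every point of $\mathcal{F}_{\pi(m)}$, for instance when $P$ is regular or never vanishes. In that case closedness is immediate from the previous paragraph. I would present the proof with this condition made explicit at the closedness step.
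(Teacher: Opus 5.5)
Your analysis is correct: the lemma as stated is false, and your counterexample shows it. The counterexample lies inside the paper's own Example \ref{exa:CarrPlane}, which explicitly allows $\psi$ to have zeros. Take $\tau=\rmd t$, $\kappa=\partial_t$ and $P=t\,\partial_t\wedge\partial_x$. Then $P^\#(\rmd t)=t\,\partial_x$ and $P^\#(\rmd x)=-t\,\partial_t$. So $P$ has rank $2$ off $\{t=0\}$, where compatibility is automatic, and rank $0$ on $\{t=0\}$, where it is vacuous. The leaf through $m=(1,0)$ is $\{t>0\}$, and it does not contain the fibre $\{x=0\}$. No argument can close your closedness step in general.

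The paper's proof breaks exactly where you got stuck. It correctly shows that the rank is constant on $S$, so compatibility holds on all of $S$ and $\kappa|_S$ is a vector field on $S$. It then asserts that the fibre is ``the maximal integral curve of $\kappa|_S$ passing through $m$''. That is true of $\kappa$ on $M$, not of $\kappa|_S$ on $S$. Picard--Lindel\"of keeps the curve in $S$ only while it is defined there and gives no completeness. Since leaves need not be closed in $M$, the curve can run out of $S$ into a zero of $P$. In your example, the maximal integral curve of $\kappa|_S$ through $(1,0)$ is $s\mapsto(1+s,0)$ for $s\in(-1,\infty)$.

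Your open--closed argument is the right tool for controlling this. It shows that the connected component of $\mathcal{F}_{\pi(m)}\setminus\{P=0\}$ through $m$ lies in $S$. Since a zero of $P$ is its own leaf, the conclusion holds if and only if $P$ has no zeros on $\mathcal{F}_{\pi(m)}$.

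This corrected version is all the paper needs later. Lemma \ref{lem:SubRBund}, Proposition \ref{prop:CarSymBun} and Theorem \ref{trm:LocalHam} all assume Poisson-stationarity. In that case the $\R$-action acts by Poisson diffeomorphisms, so the rank of $P$ is constant along each fibre. Your extra hypothesis then holds automatically, as it also does for regular $P$.

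One cosmetic point: if $\tau$ is not a principal connection, $\kappa$ need not be complete. Run your connectedness argument on the maximal domain of $\gamma$ rather than on $\R$. Its image is still the whole fibre, because $\kappa$ does not vanish.
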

\begin{proof}
Let $S$ be the symplectic leaf containing the point $m\in M$ as specified by the lemma. Note that a point belongs to a single leaf, i.e., we have a partition of the manifold  By definition the  tangent space at $m \in S$ is exactly $\textrm{Im}(P^\#_m)$. As $S$ is a smooth manifold, it has a well-defined dimension. Therefore, the rank of $P$ is constant across all the points of $S$.  As the rank of $P$ at $m$ is non-zero, the rank at all $m' \in S$ is non-zero.  Thus, the compatibility condition 
$$\ker_{m'}(g) \subset \textrm{Im}(P^\#_{m'}) = \sT_{m'} S\,,$$
holds for all $m' \in S$.  Then as $\kappa_{m'} \in \sT_{m'} S$ for all $m' \in S$, the restriction $\kappa|_S \in \Vect(S)$ is well-defined.  By the Picard--Lindelöf theorem, if a vector field is everywhere tangent to a submanifold, any integral curve that starts inside that submanifold can never leave it. Since the fibre $\mathcal{F}_{\pi(m)}$ is  the maximal integral curve of $\kappa|_S$ passing through $m$, the fibre is contained within the symplectic leaf $S$.
\end{proof}
\begin{lemma}\label{lem:SubRBund}
Let $(M, g, \tau, P)$ be a Poisson-stationary Carrollian–-Poisson bundle. If $S$ is a symplectic leaf of $M$ with non-zero dimension, then $S$ is a principal $\mathbb{R}$-bundle over a symplectic leaf of the base manifold $\Sigma$.
\end{lemma}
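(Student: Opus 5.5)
The plan is to show first that $S$ is saturated by the fibres of $\pi : M \rightarrow \Sigma$ and invariant under the principal $\R$-action, then to identify $\pi(S)$ with the quotient $S/\R$, and finally to give $\pi(S)$ its leaf structure in $\Sigma$.

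\emph{Saturation and invariance.} Since $\dim S > 0$, the rank of $P$ is non-zero at every $m \in S$, because the rank is constant along a leaf, as noted in the proof of Lemma \ref{lem:SymFol}. Lemma \ref{lem:SymFol} therefore applies at each point of $S$, and gives $\mathcal{F}_{\pi(m)} \subset S$ for all $m \in S$. Hence $S = \pi^{-1}(\pi(S))$. The principal action is the flow of the complete vector field $\kappa$, so the action map $S \times \R \rightarrow M$, $(m,r) \mapsto m \triangleleft r$, lands in $S$. This is where Poisson-stationarity enters. Because $\mathcal{L}_\kappa P = 0$, each map $R_r : m \mapsto m \triangleleft r$ is a Poisson diffeomorphism, so it sends symplectic leaves to symplectic leaves. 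Consequently $R_r(S)$ is a leaf meeting $S$, so $R_r(S) = S$. Moreover $R_r|_S$ is a smooth diffeomorphism for the leaf's own (possibly immersed) manifold structure. It is smooth because a smooth map into $M$ whose image lies in a leaf is smooth into that leaf, leaves being weakly embedded. Thus $\R$ acts smoothly on $S$ with its leaf topology. The action is free because it is free on $M$. It is proper because the action is proper on $M$ and $S$ is saturated: locally $S \cong \R \times U$ with $U = \pi(S)$ locally, using a local trivialisation of $M$ restricted to $S$.

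\emph{The quotient.} With a free and proper smooth action, $S \rightarrow S/\R$ is a principal $\R$-bundle, and $S/\R$ is identified bijectively with $\pi(S) \subset \Sigma$. I will check that $\pi|_S : S \rightarrow \Sigma$ is an immersion modulo $\kappa$: its kernel at $m$ is $\sT_m S \cap \mathsf{V}_m M = \Span\{\kappa_m\}$. It follows that $\pi(S)$ is an immersed submanifold of $\Sigma$ of dimension $\dim S - 1$, and $\pi|_S$ is exactly the bundle projection. Local sections of $M$, restricted to $\pi(S)$, give the local trivialisations of $S$.

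\emph{Leaf structure on the base (main obstacle).} The main obstacle is to make precise the phrase ``symplectic leaf of the base manifold $\Sigma$''. By equivariance $P$ descends to a bivector $\pi_* P$ on $\Sigma$, and $\pi$ is a Poisson map, since brackets of $\R$-invariant functions are invariant. Each $\pi(S)$ is connected and tangent to $\pi_*(\mathrm{Im}\,P^\#)$. Maximality of $\pi(S)$ follows from maximality of $S$, since the preimage of any larger integral manifold would be a larger integral manifold in $M$. However, a dimension count warns that the characteristic distribution of $\pi_*P$ along $\pi(S)$ is $\pi_*P^\#(\mathrm{ann}\,\kappa)$. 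This has dimension $\dim S - 2$, not $\dim S - 1$. I would therefore first try to read ``symplectic leaf of the base'' as the leaf $\pi(S)$ of the quotient foliation $\{\pi(S)\}$, namely the foliation of $\Sigma$ induced by the $\R$-invariant symplectic foliation of $M$. On that reading the argument above suffices. I would add a remark that $\pi(S)$ is itself foliated by the symplectic leaves of $\pi_*P$, arising as symplectic reductions of $S$ by the locally Hamiltonian $\R$-action, in the spirit of Theorem \ref{trm:LocalHam}.
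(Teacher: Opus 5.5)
Your argument is sound, and your dimension count is more than a caveat. Read with $P_\Sigma:=\pi_*P$, as the paper's proof reads it, the statement is false, and your count locates the error in the paper's own proof.

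The paper asserts that $\pi_*$ maps $\sT_mS=\mathrm{Im}(P^\#_m)$ onto $\mathrm{Im}(P^\#_{\Sigma,\pi(m)})$, and concludes that $\Sigma_S=\pi(S)$ is a symplectic leaf of $(\Sigma,P_\Sigma)$. However, $\pi_*(\sT_mS)$ has dimension $\dim S-1$, which is odd. Symplectic leaves are even-dimensional, and $\mathrm{Im}(P^\#_{\Sigma,\pi(m)})=\pi_*P^\#_m(\mathrm{ann}\,\kappa_m)$ has dimension $\dim S-2$. More precisely, $\sT^*_mM=\mathrm{ann}\,\kappa_m\oplus\Span\{\tau_m\}$ and $P^\#(\tau)=E_\mathsf{H}$, so $\pi_*(\sT_mS)=\mathrm{Im}(P^\#_{\Sigma,\pi(m)})\oplus\Span\{\pi_*E_{\mathsf{H},m}\}$. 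The paper's map therefore does not land in $\mathrm{Im}(P^\#_\Sigma)$; the direction $P^\#(\tau)$ is what gets lost.

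The paper's own Example \ref{exa:BTZ} is a counterexample. There $P=r_+^{-1}\kappa\wedge v^\mathsf{H}$ is Poisson-stationary, the only leaf is $S=\mathcal{H}^+$, and $\pi(S)=S^1$. Yet $\pi_*\kappa=0$ forces $P_\Sigma=0$, whose leaves are points; the same happens for every minimal structure. So no proof of the literal statement can exist. Your reinterpretation is the necessary repair, not one option among several: $\pi(S)$ is a leaf of the quotient foliation, and that leaf is itself foliated by the symplectic leaves of $P_\Sigma$. You can state it with confidence.

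On the route, the two proofs differ in where the manifold structure on $\pi(S)$ comes from.
\begin{itemize}
\item The paper obtains it from the incorrect identification of $\pi(S)$ as a leaf of $P_\Sigma$, calls it a ``smooth submanifold'', and then restricts the bundle.
\item You build it intrinsically. You use the free, proper, smooth $\R$-action on $S$ in its leaf topology, via weak embeddedness, together with the injective immersion $S/\R\to\Sigma$ whose kernel is $\Span\{\kappa_m\}$. This needs no base Poisson structure and correctly yields only an immersed submanifold.
\end{itemize}

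Your version also uses stationarity exactly where it matters. Because each $R_r$ is a Poisson diffeomorphism, the rank of $P$ is constant along fibres, and this is what actually justifies your appeal to Lemma \ref{lem:SymFol}. Without stationarity that lemma fails. In Example \ref{exa:CarrPlane} take $\tau=\rmd t$, $\kappa=\partial_t$ and $\psi=t^2+x^2$; then the leaf $\R^2\setminus\{0\}$ does not contain the fibre $x=0$.

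Finally, your corrected statement is all that Proposition \ref{prop:CarSymBun} and Theorem \ref{trm:LocalHam} use downstream, since neither needs $\pi(S)$ to be symplectic.
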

\begin{proof}
As the dimension of $S$ is non-zero, Lemma \ref{lem:SymFol} guarantees that $\kappa|_S$ is tangent to $S$. Thus, the flow of $\kappa$ preserves the leaves, meaning that $S$ is invariant under the principal $\R$-action. We then define $\Sigma_S :=  \pi(S)$. Because $S$ is invariant under the principal action, we have $S = \pi^{-1}(\Sigma_S)$.  \par 
It remains to establish that $\Sigma_S$ is a smooth manifold, this requires the Poisson-stationary condition. As $P$ is taken to be equivariant, i.e., the principal $\R$-action acts by Poisson map, $P$ is projectable.  That is, $\pi_* P := P_\Sigma$ is a Poisson structure on $\Sigma$, and furthermore $\pi : (M, P) \rightarrow (\Sigma, P_\Sigma)$ is a Poisson submersion.  As $\ker(\pi_{*,m}) = \Span\{ \kappa_m\}$, where $\pi_* : \sT_m M \rightarrow \sT_{\pi(m)}\Sigma $, $\pi_*$ restricts to a surjective linear map
$$T_m S = \textrm{Im}(P^\#_m) \rightarrow \textrm{Im}(P^\#_{\Sigma, \pi(m)})\,,$$
for all points $m \in S$. Surjectivity demonstrates that $\Sigma_S$ is a symplectic leaf of the reduced Poisson manifold $(\Sigma, P_\Sigma)$. Consequently, $\Sigma_S$ is a smooth submanifold of $\Sigma$. Because $\Sigma_S$ is a smooth submanifold of the base manifold, its preimage under the principal bundle projection $\pi$ naturally inherits the structure of a principal bundle. Since $S = \pi^{-1}(\Sigma_S)$, we conclude that $S$ is a principal $\mathbb{R}$-bundle over $\Sigma_S$.
\end{proof}
\begin{proposition}\label{prop:CarSymBun}
Let $(M, g , \tau, P)$ be a Poisson-stationary Carrollian--Poisson bundle and let $S \subset M$ be a symplectic leaf of non-zero dimension. Then $S$, equipped with the restricted structures, is a Carrollian--symplectic bundle. 
\end{proposition}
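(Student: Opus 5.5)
We need to show that $S$ with restricted $g|_S$, $\tau|_S$ and the symplectic form $\omega_S$ (inverse of $P$ restricted to its leaf) forms a Carrollian--symplectic bundle. The plan is to verify, in order: the principal bundle structure, the Carrollian bundle axioms, and the Poisson/compatibility condition.

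\textbf{Principal bundle.} By Lemma \ref{lem:SubRBund}, $S = \pi^{-1}(\Sigma_S)$ is a principal $\R$-bundle over the symplectic leaf $\Sigma_S \subset \Sigma$. The principal action is the restriction of the action on $M$, and the fundamental vector field of $S$ is $\kappa|_S$, which is tangent to $S$ by Lemma \ref{lem:SymFol}.

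\textbf{Carrollian structure.} Set $g_S := \iota^* g$ and $\tau_S := \iota^*\tau$, where $\iota : S \hookrightarrow M$ is the inclusion of the (initial, immersed) submanifold.

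First, $\kappa_m \in \sT_m S$ and $\tau(\kappa)=1$, so $\tau_S(\kappa|_S) = 1$. Hence $\tau_S$ is nowhere vanishing and is an Ehresmann connection on $S$ whose vertical part is $\Span\{\kappa|_S\}$.

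Next we identify $\ker(g_S)$. Clearly $\kappa|_S \in \ker(g_S)$. Conversely, let $X \in \sT_m S$ with $g_S(X, -) = 0$ on $\sT_m S$, and decompose $X = \tau(X)\kappa_m + X_\mathsf{H}$. Since $\kappa_m \in \sT_m S$, the component $X_\mathsf{H} = X - \tau(X)\kappa_m$ also lies in $\sT_m S$. Then $g(X_\mathsf{H}, X_\mathsf{H}) = g(X, X_\mathsf{H}) = 0$, and positivity of $g$ on horizontal vectors forces $X_\mathsf{H}=0$. Thus $\ker(g_S) = \Vect_\mathsf{V}(S)$.

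Positivity of $g_S$ on $\ker(\tau_S) = \sT S \cap \mathsf{H}M$ is inherited from $g$. Together these show $(S, g_S, \tau_S)$ is a Carrollian bundle.

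\textbf{Poisson structure and compatibility.} As a symplectic leaf, $S$ carries the symplectic form $\omega_S$ whose inverse $P_S$ is the restriction of $P$. Here $P_m \in \wedge^2 \sT_m S$ because $\mathrm{Im}(P^\#_m) = \sT_m S$. Since $P_S^\#$ is an isomorphism at each point, the compatibility condition holds trivially, exactly as in Example \ref{exa:Symp}. One may also note that it holds directly, since $\ker(g_S)_m = \Span\{\kappa_m\} \subset \sT_m S = \mathrm{Im}(P^\#_{S,m})$ by Lemma \ref{lem:SymFol}. Finally, equivariance of $P$ restricts to equivariance of $P_S$, as the $\R$-action preserves $S$.

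\textbf{Main obstacle.} The delicate step is the kernel computation: it requires showing that the horizontal part of a tangent vector to $S$ is again tangent to $S$. This is exactly where $\kappa \in \sT S$ (Lemma \ref{lem:SymFol}) is essential. A secondary subtlety is that $S$ is in general only an immersed submanifold. We handle this by working pointwise and with the pullback $\iota^*$, relying on Lemma \ref{lem:SubRBund} for the bundle structure.
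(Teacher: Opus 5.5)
Your proposal is correct and follows essentially the same route as the paper. Both arguments restrict $g$ and $\tau$ via $\iota^*$, invoke Lemmas \ref{lem:SymFol} and \ref{lem:SubRBund} for $\kappa\in\sT S$ and the principal $\R$-bundle structure, identify $\ker(g_S)$ with the vertical directions, and obtain compatibility for free because $P|_S$ is symplectic. Your kernel step, which notes $X_\mathsf{H}=X-\tau(X)\kappa_m\in\sT_m S$ and then $g(X_\mathsf{H},X_\mathsf{H})=g(X,X_\mathsf{H})=0$, is a cleaner and more precise version of the paper's informal ``transverse horizontal component'' argument.
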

\begin{proof}
Let $\iota : S \hookrightarrow M$ be the inclusion map of the symplectic leaf. The metric is defined as $g_S := \iota^* g$, and the  clock form is $\tau_S := \iota^* \tau$. Lemma \ref{lem:SubRBund} established that the restriction of $\pi$ to $S$ mapping $S \rightarrow \Sigma_S$ is a principal $\mathbb{R}$-bundle. Furthermore, by the foundational properties of symplectic foliations, the restricted structure $P|_S$ is symplectic. To establish that $S$ is a Carrollian--symplectic bundle, we must demonstrate that the kernel of $g_S$ equals the vertical vector fields on $S$, that $\tau_S$ is a valid clock form, and that the compatibility condition holds.\par 
let $s \in S$ be an arbitrary point of the symplectic leaf.  As the rank of $P$ on $S$ is non-zero,  Lemma \ref{lem:SymFol}  implies that $\mathsf{V}_s M \subset \sT_s M$, and in particular $\mathsf{V}_s S = \mathsf{V}_s M$.
\begin{enumerate}
\item \label{pro:1} Let $s \in S$ be an arbitrary point on the symplectic leaf. As the $\R$ fibre at $s \in M$ lies in the symplectic leaf $S$, we have $\mathsf{V}_s S = \mathsf{V}_s M $. Suppose $u \in \mathsf{V}_s S$, then $g_s(u,v)=0$ for all $v \in \sT_s M$.  Since $\sT_s S \subset \sT_s M$, we have $g_s(u,v)=0$ for all $v \in \sT_s S$.  Thus, $u \in \ker(g_s)$, and so as $s$ is arbitrary, we establish that  $\Vect_\mathsf{V}(S)\subset \ker(g_S)$.
\item \label{pro:2} Conversely, suppose $u \in \ker(g_s) \subset \sT_s S$. By definition, $g_s(u, w) = 0$ for all $w \in \sT_s S$. Because $\mathsf{V}_s M \subset \mathrm{T}_s S$, we can decompose the ambient tangent space as  $\sT_s M = \sT_s S + \mathsf{H}_s M$. Since $g$ is strictly positive definite on $\mathsf{H}_s M$ and vanishes when evaluated against $\mathsf{V}_s M$, any horizontal component of $u$ transverse to $\sT_s S$ would violate the non-degeneracy of $g$ on the horizontal distribution. Thus, the kernel of $g_s$ is contained within the intersection of $\ker(g_s)$ and $\sT_s S$, which simplifies to $\mathsf{V}_s M \cap \sT_s S = \mathsf{V}_s S$. As $s$ is arbitrary, we establish that $\ker(g_S) \subset \Vect_\mathsf{V}(S)$. 
\end{enumerate} 
Together \eqref{pro:1} and \eqref{pro:2} establishes  that $\ker(g_S )=\Vect_\mathsf{V}(S)$.
\par 
Since the Carroll vector field $\kappa$ is tangent to $S$, evaluating $\tau_S$ on $\kappa|_S$ yields $(\iota^*\tau)(\kappa) = \tau(\kappa) = 1$. Since $\ker(g_S) = \Span\{\kappa|_S\}$, $\tau_S$ is a  clock form  $S$.  \par 
Finally, because $P|_S$ is symplectic, the restricted  map $P^\#_{S,s} : \sT_s^* S \rightarrow \sT_s S$ is an isomorphism, for all $s \in S$. Thus, the compatibility condition $\ker_s(g_S) \subset \textrm{Im}(P^\#_{S,s})$ is automatically satisfied for all points $s \in S$. 
\end{proof}
Hamiltonian vector fields on a Carrollian--Poisson manifold are defined as standard, i.e., given a function $f \in C^\infty(M)$, the associated Hamiltonian vector field is $X_f := P^\#(\rmd f)$.
\begin{theorem}\label{trm:LocalHam}
Let $(M, g , \tau, P)$ be a Poisson-stationary Carrollian--Poisson bundle.   Then, the fundamental (Carroll) vector field $\kappa \in \ker(g)$ is locally Hamiltonian on any symplectic leaf $S$ that is of non-zero dimension. 
\end{theorem}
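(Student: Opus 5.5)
The plan is to restrict everything to a symplectic leaf $S$ of non-zero dimension and use the standard symplectic correspondence between Poisson vector fields and closed one-forms. First, by Lemma \ref{lem:SymFol}, $\kappa$ is tangent to $S$, so $\kappa|_S \in \Vect(S)$ is well defined. By Proposition \ref{prop:CarSymBun} (which uses the Poisson-stationary hypothesis through Lemma \ref{lem:SubRBund}), $S$ with the restricted structures is a Carrollian--symplectic bundle. Write $\omega_S := (P|_S)^{-1}$ for the leafwise symplectic form, so that $P_S^\# : \sT^*S \rightarrow \sT S$ is an isomorphism.

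Next I show that $\kappa|_S$ is a Poisson vector field on $(S, P|_S)$. The Poisson-stationary condition means $\mathcal{L}_\kappa P = 0$ on $M$. Since $\kappa$ is tangent to $S$, its flow $\Phi_t$ maps $S$ into itself; leaves are preserved by Poisson diffeomorphisms anyway. The flow is then a Poisson map $M \rightarrow M$ that restricts to a diffeomorphism of $S$, and the restriction of $P$ to the leaf is the bivector $P|_S$ on $S$ with $\iota_* P|_S = P$ along $S$. Hence $(\Phi_t|_S)_* P|_S = P|_S$, and differentiating at $t=0$ gives $\mathcal{L}_{\kappa|_S} P|_S = 0$. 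Equivalently, in symplectic terms, $\mathcal{L}_{\kappa|_S}\omega_S = 0$, since the inverse of an invariant nondegenerate bivector is invariant.

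Now define $\alpha_S := -\,\omega_S(\kappa|_S, -)$, or, in the paper's conventions, the unique $\alpha_S \in \Omega^1(S)$ with $P_S^\#(\alpha_S) = \kappa|_S$. This one-form exists and is unique by the nondegeneracy supplied by Proposition \ref{prop:CarSymBun}. Up to the sign convention $i_X\alpha = -\alpha(X)$, $\alpha_S = \pm\, i_{\kappa|_S}\omega_S$. Cartan's formula gives $\rmd \alpha_S = \pm\, \mathcal{L}_{\kappa|_S}\omega_S \mp i_{\kappa|_S}\rmd\omega_S = 0$, because $\omega_S$ is closed and $\kappa$-invariant. By the Poincaré lemma, on every contractible open $U \subset S$ there is $h_U \in C^\infty(U)$ with $\alpha_S|_U = \rmd h_U$. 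Then $\kappa|_U = P_S^\#(\rmd h_U)$, which is the statement that $\kappa$ is locally Hamiltonian on $S$. Finally, I would remark that $h_U$ can be extended locally off the leaf to give a function on $M$ whose Hamiltonian vector field agrees with $\kappa$ along $S$.

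The main obstacle is the careful justification that $\mathcal{L}_\kappa P = 0$ on $M$ descends to $\mathcal{L}_{\kappa|_S}\omega_S = 0$ on the leaf, since leaves are only immersed submanifolds. I would handle this via the flow argument above, working locally in a Weinstein splitting chart around a point of $S$: there $P$ is the sum of the symplectic part on the leaf directions and a transverse part vanishing at the point, and $\kappa$ is tangent to the leaf slice. The sign bookkeeping for $i_X$ is routine by comparison.
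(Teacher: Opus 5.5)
Your proposal is correct and takes essentially the paper's route: restrict to the leaf via Proposition \ref{prop:CarSymBun}, combine $\mathcal{L}_{\kappa_S}\omega_S=0$ with Cartan's formula to get $\rmd(i_{\kappa_S}\omega_S)=0$, and apply the Poincar\'e lemma to obtain a local Hamiltonian. Your flow argument, showing that $\mathcal{L}_\kappa P=0$ on $M$ descends to $\mathcal{L}_{\kappa_S}\omega_S=0$ on the leaf, usefully fills in a step the paper only asserts. Your closing remark about extending $h_U$ off the leaf is not needed, and it would give agreement with $\kappa$ only along the local plaque $U$, not along all of $S$.
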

\begin{proof}
Via Proposition \ref{prop:CarSymBun}, we know that given any symplectic leaf $S \subset M$ with non-zero dimension,  $\pi|_S : S \rightarrow \Sigma_S$ is a Carrollian--Symplectic bundle. Moreover, the symplectic structure, $\omega_S$ is equivariant, that is, $\mathcal{L}_{\kappa_S} \omega_S =0$, where we define $\kappa|_S := \kappa_S \in \Vect_\mathsf{V}(S)$. Thus, as $\omega_S$ is closed, we observe that
$$\mathcal{L}_{\kappa_S} \omega_S = \rmd (i_{\kappa_S} \omega_S ) + i_{\kappa_S}(\rmd \omega_S) = \rmd (i_{\kappa_S} \omega_S ) = 0\,.$$  
Then, via the Poincar\'e Lemma,  for every point $s \in S$, there exists an open subset $U \subseteq S$ such that $i_{\kappa_S} \omega_S|_U = \rmd f$, for some  function $f \in C^\infty(U)$.  This is algebraically identical to $\kappa_S = P_S^\#(\rmd f)$. Thus, $\kappa$ is locally Hamiltonian on any symplectic leaf that is not zero-dimensional.  
\end{proof}
%
%
\subsection{Further Examples of Carrollian--Poisson bundles}\label{subsec:MoreExamples}
We proceed to give further examples of Carrollian--Poisson bundles, including physically motivated examples. The approach to constructing examples is to lift Killing vector fields, thus the constructions generalise directly to many other Carrollian bundles. The method employed does not exhaust all classes of examples. 
\begin{example}\label{exa:BTZ}
Consider the stationary rotating BTZ metric \cite{Bañados:1992} on $\R^2 \times S^1$, which we write using co-rotating angular coordinates $(v, r, \varphi)$ as (in geometric units $G = c = 1$)
$$\rmd s^2 = - N^2 \, \rmd v^2 + 2 \rmd v \rmd r  + r^2 \big(\rmd \varphi +(\Omega_H + N^\phi) \rmd v \big)^2\,,$$
where 
$$N^2(r) = - M + \frac{r^2}{L^2} + \frac{J^2}{4 r^2}\,, \qquad N^\phi(r) = - \frac{J}{2 r^2}\,, \qquad \Omega_H = - N^\phi(r_+) = \frac{J}{2 r^2_+}\,,$$
and the physical parameters are the mass $M$, angular momentum $J$ and the AdS radius $L$. 
The radius of the event horizon is defined by the largest root of $N^2(r_+)=0$.  $\Omega_H$ is the angular velocity of the horizon.  At the horizon $r = r_+$, $N^2(r_+)=0$ and $\Omega_H + N^\phi(r_+)=0$ and the Carrollian metric on the horizon  $\mathcal{H}^+ \simeq \R \times S^1$ is 
$$g = \rmd \varphi  \otimes \rmd \varphi \, r^2_+\,.$$
Note that this metric is simply $g = \pi^* h \, r^2_+$, where $h$ is the standard round metric on $S^1$. Thus, $(\mathcal{H}^+, g , \tau)$  is a Carrollian bundle where we choose an arbitrary clock form $\tau$. We then define $P = r^{-1}_+ \, \kappa \wedge v^\mathsf{H}$, where $v \in \Vect(S^1)$ is the unique nowhere vanishing that satisfies $h(v,v) =1$ and $\Omega(v) >0$, where $\Omega$ is the canonical volume form associated with $h$. As all degree $3$ multivector fields on $\R \times S^1$ vanish, $P$ is Poisson. We pick a dual basis of $\Omega^1_\mathsf{H}(\mathcal{H}^+)$, which we denote as $\zx$.  Then $P^\#(\tau) = r^{-1}_+ \, v^\mathsf{H}$ and $P^\#(\zx) = - r^{-1}_+ \, \kappa$.  Thus, $\mathrm{Im}(P^\#) = \Span \{\kappa, v^\mathsf{H}  \}$, which implies that we have a Carrollian--symplectic bundle.  Clearly, the compatibility condition $\ker_m(g) \subset \mathrm{Im}(P_m^\#)$ is satisfied for all points $m \in\mathcal{H}^+ $. We have $\mathcal{L}_\kappa P = r_+^{-1}\, \kappa \wedge [\kappa, v^\mathsf{H}]$.  As $v^\mathsf{H}$ is projectable, $[\kappa, v^\mathsf{H}]$ is vertical, and thus $\mathcal{L}_\kappa P=0$, and so via Theorem \ref{trm:LocalHam}, $\kappa$ is locally Hamiltonian, as the unique symplectic leaf is the entirety of the  manifold $\mathcal{H}^+$.
\end{example}
\begin{example}\label{exa:KerBH}
The Kerr (outer) horizon is $\mathcal{H}^+\Diffeo \R \times S^2$.   For details of the Carrollian structure, see Marsot et al. \cite[Section III]{Marsot:2022}. We employ Eddington--Finkelstein-like coordinates $(v, \theta, \varphi)$, where $v$ is the null time, i.e., it parametrises the integral curves of the null generators,  and $\varphi$ is the angular coordinate that co-rotates with the horizon. Here $0 \leq \theta \leq \pi$ and $0 \leq \varphi < 2 \pi$.   As standard, we define (in geometric units $G = c = 1$)
$$r_+ = M + \sqrt{M^2 +a^2}\,, \qquad \Omega_{H} = \frac{a}{r^2_+ + a^2}\,,\qquad  \Sigma_+ = r^2_+ + a^2 \cos^2(\theta)\,,$$ 
where $M$ is the mass of the black hole and $a$ is the spin parameter.  The degenerate metric on $\mathcal{H}^+$ is 
$$g = \rmd \theta \otimes \rmd \theta \,  \Sigma_+ + \rmd \varphi \otimes \rmd \varphi \, \frac{(r_+^2 + a^2)^2}{\Sigma_+} \, \sin^2 (\theta)\,.$$ 
Clearly $\ker(g) = \Span \{ \partial_v\} = \Vect_{\mathsf{V}}(\mathcal{H}^+) $. \par 
The Kerr solution is topologically $\widetilde{M} \Diffeo \R \times \R \times S^2$. Let $X \in \Vect(\widetilde{M})$ be the rotational Killing vector field; this can be chosen canonically such that the integral curves have period $2 \pi$ and   the direction fixed to align with the black holes angular moment vector.  We then define $u:= X|_{S^2}$. Then via chosen clock form we define $u^\mathsf{H} \in \Vect_\mathsf{H}(\mathcal{H}^+)$.  We then define
$$P :=  \kappa \wedge u^\mathsf{H}\,,$$
Observe that $\SN{P,P} = -2 \,  \kappa \wedge u^\mathsf{V} \wedge [\kappa, u^\mathsf{H} ]=0$, as $u^\mathsf{H}$ is projectable, $[\kappa, u^\mathsf{H}]$ is vertical.  Note that $P$ is well-defined and is of rank $2$ everywhere except at the poles where $P$ vanishes (this can be checked using local coordinates). We then define $\mathcal{H}^+_0 := \mathcal{H}^+ \setminus \{N,S\}$, which is again a smooth manifold.  Using the metric, we can construct a  basis of $\Omega^1_\mathsf{H}(\mathcal{H}_0^+)$, which we denote as  $\{\zx, \eta \}$, where $\zx(u^\mathsf{H})= 1$ and $\eta(u^\mathsf{H})=0$. We observe that $P_m^\#(\tau_m) =  u_m^\mathsf{H}$ and $P_m^\#(\zx_m) = -  \kappa_m$, and $P_m^\#(\eta_m)=0$, provided $m \in \mathcal{H}_0^+$. Thus, away from the poles $\textrm{Im}(P_m^\#) = \Span \{\kappa_m, u_m^\mathsf{H} \}$, demonstrating the compatibility condition $\ker_m(g) \subset \mathrm{Im}(P_m^\#)$. At the poles, the Poisson structure is trivial, and thus the anchor map is a zero map.  We have $\mathcal{L}_\kappa P =  \kappa \wedge [\kappa, u^\mathsf{H}]=0$,  as $u^\mathsf{H}$ is projectable, which implies that  $[\kappa, u^\mathsf{H}]$ is vertical.  Thus, via Theorem \ref{trm:LocalHam}, $\kappa$ is locally Hamiltonian everywhere on $\mathcal{H}^+$ except at the poles.
\end{example}
\begin{example}\label{exa:FutNullInf}
Consider future null infinity  $\mathscr{I}^+ \cong \mathbb{R} \times S^{d-2}$, which we view as a Carrollian bundle $(\mathscr{I}^+, g= \pi^* h, \tau)$, where $h$ is the standard spherical metric on $S^{d-2}$, and the we define a clock form $\tau$.  The Lie algebra of Killing vector fields on $S^{d-2}$ is $\mathfrak{so}(d-1)$. Within the Killing vector fields, we have the Cartan Lie subalgebra, which is the largest abelian Lie subalgebra of the Killing vector fields. The generators of the Cartan Lie subalgebra we denote as $L_i$ with $i = 1, 2 \cdots  [(d-1)/2]=:k$. We can then form the Hopf-like vector field $L := \sum_{j=1}^k L_j$. Using the clock form, we define  
$$P =  \kappa \wedge L^\mathsf{H}\,.$$ 
As $L^\mathsf{H}$ is projectable, $[\kappa, L^\mathsf{H}]$ is vertical, implying $\SN{P,P}=0$.  Thus, $P$ is a Poisson structure on $\mathscr{I}^+$.  By the Hairy Ball Theorem, if $d$ is even, $S^{d-2}$ is an even-dimensional sphere and $L$ vanishes at isolated poles, meaning the rank of $P$ drops to zero at those two points. Conversely, if $d$ is odd, $S^{d-2}$ is an odd-dimensional sphere and $L$ is a nowhere-vanishing Hopf-like vector field, meaning $P$ is regular and of rank $2$ everywhere. We define the  set $\{ N,S\} := \{m \in \mathscr{I}^+ ~~| ~~ L^\mathsf{H}_m =0 \}$, which is empty in the case of odd dimensional spheres.   Then defining $\mathscr{I}^+_0 := \mathscr{I}^+ \setminus \{N,S\}$.  We then find (using the metric) a frame for $\Omega^1_\mathsf{H}(\mathscr{I}^+_0)$, which we denote as $\{\zx, \eta_i \}$, such that $\zx(L^\mathsf{H})=1$ and $\eta_i(L^\mathsf{H})=0$.  Then $P^\#(\tau) = L^\mathsf{H}$, $P^\#(\zx)= - \kappa$, and $P^\#(\eta_i)=0$. Thus, $\textrm{Im}(P^\#) = \Span\{\kappa, L^\mathsf{H} \}$. So, for all points of $\mathscr{I}^+_0$ the compatibility condition $\ker_m(g) \subset \mathrm{Im}(P^\#_m)$ holds.  We have $\mathcal{L}_\kappa P =  \kappa \wedge [\kappa, L^\mathsf{H}]=0$,  as $L^\mathsf{H}$ is projectable, which implies that  $[\kappa, L^\mathsf{H}]$ is vertical.  Thus, via Theorem \ref{trm:LocalHam}, $\kappa$ is locally Hamiltonian everywhere on $\mathscr{I}^+$  in the odd dimensional case,  and in the even dimensional case, everywhere except at the poles.
\end{example}
\begin{example}\label{exa:Torus}
Let $(T^n, h)$ be the flat $n$-torus.  Note, there are $n$-commuting killing vector fields and via rescaling we can always construct a frame of $\Vect(T^n)$, $\{v_1, \cdots , v_n\}$ consisting of orthonormal oriented Killing vector fields. That is, $h(v_i, v_j) = \delta_{ij}$ and $\Omega(v_i)> 0$, where $\Omega$  is the canonical volume form associated with $h$.  Note, that all such frames are isomorphic via the orthogonal group $O(n)$. We can then form the trivial principal $\R$-bundle $M := \R\times T^n \stackrel{\pi}{\rightarrow} T^n$.  By defining $g := \pi^*h$ and choosing a closed equivariant clock form $\tau$ (which is equivalent to a flat principal connection),  we construct the Carrollian bundle $(\R \times T^n, g, \tau)$.  Let us set $v := v_1 + v_2 + \cdots v_n$,  then 
$$P = \kappa \wedge v^\mathsf{H}\,, $$
is a Poisson structure on $\R \times T^n$ as $[\kappa, v^\mathsf{H}]=0$. Picking a dual basis of $\Omega^1_\mathsf{H}(\R \times T^n)$, which we denote as $\{\zx_i, \cdots \zx_n \}$, we observe that $P^\#(\tau) = v^\mathsf{H}$ and $P^\#(\zx_i) = -\kappa$. Thus, $\mathrm{Im}(P^\#) = \Span \{\kappa, v^\mathsf{H} \}$. Clearly, $\ker_m(g) \subset \mathrm{Im}(P_m^\#)$, for all points $m \in \R \times T^n$, and so $(\R \times T^n, \pi^*h, \tau, P)$ is a Carrollian--Poisson bundle. \par 
Let us consider a modified structure 
$$P = \kappa \wedge v^\mathsf{H} + \omega \sum_{i<j} v^\mathsf{H}_i \wedge v^\mathsf{H}_j \,,$$
where $\omega\in \R_{\geq 0}$  is a frequency. As we have employed  a closed equivariant clock, i,e., we employ a flat principal connection, we have $[\kappa, v^\mathsf{H}] = 0$ and $[v^\mathsf{H}_i, v^\mathsf{H}_j]=0$, and so $\SN{P,P}=0$.   We then observe that 
$$P^\#(\tau) = v^\mathsf{H}\,, \qquad P^\#(\zx_k) = - \kappa + \omega \big(\sum_{j>k} v^\mathsf{H}_j -  \sum_{i < k} v^{\mathsf{H}}_i \big)\,.$$
If this structure is to define a Carrollian--Poisson structure then there exists an $\alpha = \zx_k c^k$, with $c^k$, possibly functions, such that $P^\#(\alpha) = \kappa$.  To find such a horizontal one-form, let us write $P = P_0 + \omega P_1$. Then we require 
\begin{enumerate}[(1)]
\item $P_0^\#(\alpha) = \kappa$, \label{eqn:conP0}
\item $P_1^\#(\alpha)=0$. \label{eqn:conP1}
\end{enumerate}
For $n$ even, $P_1$ is invertible when restricted to horizontal one-forms, and thus the only solution to  \eqref{eqn:conP1} is $\alpha =0$.  However, this cannot be a solution of \eqref{eqn:conP0}.  Thus, we require $\omega =0$ in this case. \par 
For $n$ odd,  we can set $\alpha = \sum_{k=1}^n  (-1)^k \, \zx_k$ as a solution to \eqref{eqn:conP0}; one observes that  $ \sum_{k=1}^n  (-1)^k  = -1$.  We need to check    \eqref{eqn:conP1}, i.e.,
$$S = \sum_{k=1}^n (-1)^k \, \big(\sum_{j>k} v^\mathsf{H}_j -  \sum_{i < k} v^\mathsf{H}_i \big) \stackrel{?}{=} 0\,.$$
Let us isolate the coefficient  for $v^\mathsf{H}_m$ $(1 \leq m \leq n)$
\begin{align*}
C_m &= \sum_{k=1}^{m-1}(-1)^k - \sum_{k= m+1}^n (-1)^k = \sum_{k=1}^{m-1}(-1)^k - \left( -1 - (-1)^m - \sum_{k=1}^{m-1}(-1)^k \right) \\
&= 2 \, \sum_{k=1}^{m-1}(-1)^k + 1 + (-1)^m \,.
\end{align*}
If $m$  is even, then $m-1$ is odd and so $C_m = -2 + 1+1 =0$.  If $m$ is odd, then $m-1$ is even and so $C_m = 0+1 -1 =0$. As $C_m = 0$ for all $m$, $S = 0$.\par
Thus, $P = \kappa \wedge v^\mathsf{H} + \omega \sum_{i<j} v^\mathsf{H}_i \wedge v^\mathsf{H}_j$ defines a Carrollian--Poisson structure on Carrollian bundle $(\R \times T^n, g, \tau)$ with $\omega \neq 0$ provided $n$ is odd and the clock form $\tau$ is closed and equivariant. Moreover, we observe that in this case $\mathrm{Im}(P^\#) = \Span\{\kappa, v_1^\mathsf{H}, \cdots v_n^\mathsf{H} \}$ and so we have a Carrollian--Symplectic bundle.\par 
As $[\kappa, v_i^\mathsf{H}] =0$, $\mathcal{L}_\kappa P =0$, and so via  Theorem \ref{trm:LocalHam}, $\kappa$ is locally Hamiltonian. Note that a different choices of frames using orthonormal oriented Killing vector fields isomorphic Poisson structures. 
\end{example}
%
%
\subsection{Compatible Contravariant Connections}\label{subsec:CompContConn}
To define geometric kinematics, we require the notion of a connection so that geodesics can be defined. In the current situation, we wish the connection to take into account the Poisson structure and so we will work with contravariant or Vaisman connections as commonly found in Poisson geometry. \par
Recall that a \emph{contravariant connection} or \emph{Vaisman connection} on a Poisson manifold is an $\R$-linear map $D : \Omega^1(M)\times \Omega^1(M) \rightarrow \Omega^1(M)$ that satisfies  (see \cite{Fernandes:2000,Vaisman:1991})
\begin{enumerate}[i)]
\item $D_{f\, \alpha} \beta = f \, D_\alpha \beta$,
\item $D_\alpha f \, \beta = P^\#(\alpha)f \, \beta + f \, D_\alpha \beta$,
\end{enumerate}
for all $f \in C^\infty(M)$ and $\alpha, \beta \in \Omega^1(M)$. That is, a Vaisman connection is a Lie algebroid connection on the cotangent Lie algebroid of a Poisson manifold. The \emph{torsion tensor} of a Vaisman connection is defined as 
$$T_D(\alpha, \beta) := D_\alpha \beta - D_\beta \alpha - [\alpha, \beta]\,,$$
for all $\alpha, \beta \in \Omega^1(M)$. A Vaisman connection is said to be \emph{torsion-free} if $T_D=0$. 
  \par  
Contravariant connections can be extended to act on vector fields and and tensors using standard methods. In particular, $D_\alpha X$ is defined via
$$\beta(D_\alpha)X = P^\#(\alpha)\big( \beta(X) \big)- \big(D_\alpha \beta \big)(X)\,.$$
Similarly, we have 
\begin{subequations}
\begin{align}
\big (D_\alpha g\big)(X,Y)& = P^\#(\alpha)\big(g(X,Y) \big) - g(D_\alpha X, Y) - g(X, D_\alpha Y)\,, \\ 
\big(D_\alpha P\big)(\beta, \gamma) &= P^\#(\alpha)\big( P(\beta, \gamma)\big) - P(D_\alpha \beta, \gamma) - P(\beta, D_\alpha \gamma )\,.
\end{align}
\end{subequations}
\begin{definition}\label{def:ContraMetandPoissonConn}
A Vaisman connection is said to be
\begin{enumerate}[1)]
\item \emph{metric-compatible} if $ D g=0$; and
\item \emph{clock-compatible} if $D\tau = 0$.
\end{enumerate}
We refer to a Vaisman connection that is metric-compatible and clock-compatible  as a \emph{Carroll--Vaisman connection}.
\end{definition}
\begin{observation}
From \cite[Corollary 2.58]{Bruce:2026a}, we know that on any Carrollian bundle metric-compatible affine connections always exist (not unique). Thus, on any Carrollian-Poisson bundle, we may define $D_\alpha  := \nabla_{P^\#(\alpha)}$, and so metric-compatible Vaisman connections always exist. 
\end{observation}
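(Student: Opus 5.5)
The plan is to verify directly that the assignment $D_\alpha := \nabla_{P^\#(\alpha)}$ has the required properties. Here $\nabla$ is any metric-compatible affine connection on the underlying Carrollian bundle $(M,g,\tau)$, whose existence is supplied by \cite[Corollary 2.58]{Bruce:2026a}. I would proceed in three steps: first check the two Vaisman axioms, then identify the induced action of $D$ on vector fields and tensors with that of $\nabla$ along $P^\#(\alpha)$, and finally read off $Dg=0$ from $\nabla g = 0$. No use of the compatibility condition $\ker_m(g)\subset \mathrm{Im}(P^\#_m)$ is needed, so the argument works for any Poisson structure on a Carrollian bundle.

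\emph{Vaisman axioms.} $\R$-bilinearity of $D$ is inherited from the $\R$-linearity of $\nabla$ in both slots and of $P^\#$. Since $P^\#$ is $C^\infty(M)$-linear and $\nabla_X$ is $C^\infty(M)$-linear in $X$, we get
$$D_{f\alpha}\beta = \nabla_{f P^\#(\alpha)}\beta = f\,\nabla_{P^\#(\alpha)}\beta = f\, D_\alpha\beta\,.$$
The Leibniz rule for $\nabla$ acting on one-forms gives
$$D_\alpha(f\beta) = \nabla_{P^\#(\alpha)}(f\beta) = P^\#(\alpha)(f)\,\beta + f\,D_\alpha\beta\,,$$
which is exactly axiom ii).

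\emph{Extension to tensors.} The paper defines $D_\alpha X$ through $\beta(D_\alpha X) = P^\#(\alpha)\big(\beta(X)\big) - (D_\alpha\beta)(X)$. On the other hand, $\nabla$ acting on one-forms is by definition dual to $\nabla$ on vector fields, so
$$(\nabla_{P^\#(\alpha)}\beta)(X) = P^\#(\alpha)\big(\beta(X)\big) - \beta\big(\nabla_{P^\#(\alpha)}X\big)\,.$$
Comparing the two identities shows $\beta(D_\alpha X) = \beta(\nabla_{P^\#(\alpha)}X)$ for every one-form $\beta$. Since one-forms separate vector fields, it follows that $D_\alpha X = \nabla_{P^\#(\alpha)}X$.

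\emph{Metric compatibility.} Substituting $D_\alpha X = \nabla_{P^\#(\alpha)}X$ into the formula for $D_\alpha g$ yields
$$(D_\alpha g)(X,Y) = P^\#(\alpha)\big(g(X,Y)\big) - g\big(\nabla_{P^\#(\alpha)}X,Y\big) - g\big(X,\nabla_{P^\#(\alpha)}Y\big)\,.$$
The right-hand side is precisely $(\nabla_{P^\#(\alpha)} g)(X,Y)$, which vanishes because $\nabla g = 0$. Hence $Dg = 0$.

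The only point requiring care, and the one I expect to be the main (mild) obstacle, is this identification of the dual extension of $D$ with that of $\nabla$. It must be checked that the paper's sign convention $i_X\alpha = -\alpha(X)$ plays no role. It does not, because only the pairing $\beta(X)$ enters the defining formula for $D_\alpha X$, never $i_X$.

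Finally, I would remark that clock-compatibility is \emph{not} claimed here. It would require $\nabla_{P^\#(\alpha)}\tau = 0$ for all $\alpha$, which a general metric-compatible $\nabla$ need not satisfy. Uniqueness also fails, since $\nabla$ itself is not unique. Both issues are deferred to the construction in Theorem \ref{the:MetClocCon}.
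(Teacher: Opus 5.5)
Your proposal is correct and takes the same route as the paper: pick a metric-compatible affine connection $\nabla$ from the cited corollary and set $D_\alpha := \nabla_{P^\#(\alpha)}$. The paper simply asserts the conclusion, whereas you carry out the routine checks it leaves implicit, namely the two Vaisman axioms, the identification $D_\alpha X = \nabla_{P^\#(\alpha)}X$ by duality, and hence $D_\alpha g = \nabla_{P^\#(\alpha)} g = 0$; all of these checks are right.
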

\begin{remark}
It is well known that on any Poisson manifold, Poisson-compatible Vaisman connections always exists and are not unique.  Thus, any Carrollian--Poisson bundle may be equipped with a Poisson-compatible Vaisman connection, i.e., a contravraiant connection such that $DP=0$. However, imposing this constraint  puts constraints on the geometry that forbids physically interesting examples.  We will not comment on this further.
\end{remark}
\begin{definition}
A \emph{strong Carrollian--Poisson bundle} is a quintuple $(M, g , \tau, P , D)$, where $(M,g, \tau, P)$ is a Carrollian--Poisson bundle and $D$ is a  metric-compatible and clock-compatible Vaisman  connection.
\end{definition}
\begin{lemma}\label{lem:DPres}
Let $(M, g, \tau, P, D)$ be a  strong Carrollian--Poisson bundle. Then
\begin{enumerate}[i)]
\item  $D_\alpha X_\sV \in \Vect_\sV(M)$; and
\item $D_\alpha X_\sH \in \Vect_\sH(M)$,
\end{enumerate}
for all $\alpha \in \Omega^1(M)$.
\end{lemma}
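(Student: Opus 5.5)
The plan is to exploit the two compatibility conditions separately. Clock-compatibility $D\tau=0$ will control vertical-to-vertical preservation in the dual sense, and metric-compatibility $Dg=0$ will control vertical vector fields via the kernel of $g$. We use the extension of $D$ to vector fields via
$$\beta(D_\alpha X) = P^\#(\alpha)\big(\beta(X)\big) - (D_\alpha\beta)(X)\,.$$

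For ii), take $X_\sH \in \Vect_\sH(M)$, so $\tau(X_\sH)=0$. Apply the defining formula with $\beta=\tau$:
$$\tau(D_\alpha X_\sH) = P^\#(\alpha)\big(\tau(X_\sH)\big) - (D_\alpha \tau)(X_\sH) = 0 - 0 = 0\,,$$
using $D\tau=0$. Hence $D_\alpha X_\sH \in \ker(\tau) = \Vect_\sH(M)$. This step is immediate.

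For i), take $X_\sV$ vertical, so $g(X_\sV, Y)=0$ for all $Y$. Metric-compatibility gives, for arbitrary $Y$,
$$0 = (D_\alpha g)(X_\sV,Y) = P^\#(\alpha)\big(g(X_\sV,Y)\big) - g(D_\alpha X_\sV, Y) - g(X_\sV, D_\alpha Y) = -\,g(D_\alpha X_\sV, Y)\,.$$
The first term vanishes because $g(X_\sV,Y)\equiv 0$ as a function. The last term vanishes since $X_\sV\in\ker(g)$. Thus $D_\alpha X_\sV \in \ker(g) = \Vect_\sV(M)$ by the definition of a Carrollian metric.

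The main point to watch is that these are genuinely tensorial, pointwise statements. In particular, $g(X_\sV,Y)=0$ must hold identically as a function, and not just at a point, so that its derivative along $P^\#(\alpha)$ vanishes. This is guaranteed because $X_\sV$ is a vertical vector field, i.e. a section of $\ker(g)$. Note also the asymmetry: i) only needs $Dg=0$ and ii) only needs $D\tau=0$.

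A potential subtlety is the sign convention $i_X\alpha=-\alpha(X)$. However, the extension formula is stated directly with evaluation $\beta(X)$, so no sign issues arise.
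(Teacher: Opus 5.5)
Your proof is correct and is essentially the paper's argument: i) comes from inserting a vertical field into the identity $D g=0$ (both the derivative term and the $g(X_{\sV}, D_\alpha Y)$ term vanish), and ii) from the dual extension formula with $\beta=\tau$ together with $D\tau=0$. Your opening sentence misassigns the roles, since clock-compatibility controls horizontal rather than vertical preservation, but the steps you actually carry out use each condition correctly.
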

\begin{proof}\
\begin{enumerate}[i)]
\item As $D$ is metric-compatible we have
$$P^\#(\alpha)\big(g(X,Y) \big) - g(D_\alpha X, Y) - g(X, D_\alpha Y)=0\,,$$ 
for all $\alpha \in \Omega^1(M)$ and $X,Y \in \Vect(M)$. Then setting  $X = X_\sV \in \Vect_\sV(M) = \ker(g)$, we obtain $g(D_\alpha X_\sV, Y)=0$. As $Y \in \Vect(M)$ is arbitrary, it must be the case that  $D_\alpha X_\sV \in \ker(g)$ for all $\alpha$.
\item Starting with 
$$\beta(D_\alpha X)= P^\#(\alpha)(\beta(X)) - (D_\alpha \beta)(X)\,,$$
and setting $\beta = \tau$, together with $D$ being clock-compatible, we have $\tau(D_\alpha X_\sH)= P^\#(\alpha)(\tau(X_\sH))=0$ as $\ker(\tau)= \Vect_\sH(M)$.  Thus, $D_\alpha X_\sH$ is in the kernel of $\tau$, and so $D_\alpha X_\sH \in \Vect_\sH(M)$ for all $\alpha$.
\end{enumerate} 
\end{proof}
Lemma \ref{lem:DPres}, tells us that on any strong Carrollian--Poisson bundle, the Vaisman connection restricts to $\ker(g)= \Vect_\sV(M)$ and $\Vect_\sH(M)$, and so respects the Carrollian structure.
\begin{lemma}\label{lem:MetComCont}
Let $(M, g, \tau, P)$ be a Carrollian--Poisson manifold. Then any metric-compatible Vaisman connection can be brought to the form 
$$D_\alpha X :=  \nabla_{P^\#(\alpha)}X - \Phi_\alpha(X)\, \kappa - \Omega_\alpha(X_\mathsf{H}) \,,$$
where 
\begin{enumerate}[i)]
\item $\nabla : \Vect(M) \times \Vect(M) \rightarrow \Vect(M)$ is a metric compatible affine connection;
\item $\Phi :  \Omega^1(M) \times \Vect(M) \rightarrow C^\infty(M)$ is a $C^\infty(M)$-linear map;
\item $\Omega : \Omega^1(M) \times \Vect_\mathsf{H}(M) \rightarrow \Vect_\mathsf{H}(M)$ is a $C^\infty(M)$-linear map that is skewsymmetric  with respect to $g$,
\end{enumerate}
using the clock-form to define $X = X_\mathsf{V}+ X_\mathsf{H}$, and $\kappa \in \Vect_\sV(M)$ is the Carroll vector field.
\end{lemma}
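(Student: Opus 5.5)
The approach is to fix a reference metric-compatible Vaisman connection and analyse the difference tensor. By the Observation preceding the definition of strong Carrollian--Poisson bundles (via \cite[Corollary 2.58]{Bruce:2026a}), there is a metric-compatible affine connection $\nabla$ on $M$. Then $D^0_\alpha := \nabla_{P^\#(\alpha)}$ is a metric-compatible Vaisman connection. Now let $D$ be an arbitrary metric-compatible Vaisman connection, acting on vector fields through the duality formula. Set $S_\alpha(X) := \nabla_{P^\#(\alpha)}X - D_\alpha X$. Both connections satisfy the same Leibniz rule with anchor $P^\#$, so $S$ is $C^\infty(M)$-linear in both $\alpha$ and $X$. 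Hence $S$ is a tensor $\Omega^1(M)\times\Vect(M)\to\Vect(M)$, and the task reduces to showing that $S$ has the claimed shape.

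The first step is to decompose $S$ using the clock form. For each $\alpha$ and $X$, split $S_\alpha(X) = \tau(S_\alpha(X))\,\kappa + S_\alpha(X)_\mathsf{H}$ and define $\Phi_\alpha(X) := \tau(S_\alpha(X))$. This is $C^\infty(M)$-linear, so item ii) holds. It remains to show that the horizontal part $S_\alpha(X)_\mathsf{H}$ depends only on $X_\mathsf{H}$, and that $\Omega_\alpha(Y_\mathsf{H}) := S_\alpha(Y_\mathsf{H})_\mathsf{H}$ is $g$-skewsymmetric.

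The second step uses metric compatibility. Subtracting $D^0g=0$ from $Dg=0$ gives
$$g(S_\alpha X, Y) + g(X, S_\alpha Y) = 0$$
for all $X,Y$. Since $g$ only sees horizontal components, this becomes $g(S_\alpha(X)_\mathsf{H}, Y_\mathsf{H}) + g(X_\mathsf{H}, S_\alpha(Y)_\mathsf{H}) = 0$.
\begin{enumerate}[i)]
\item Take $X=\kappa$. Then $g(S_\alpha(\kappa)_\mathsf{H}, Y_\mathsf{H})=0$ for all $Y$. Since $g$ is positive definite on $\Vect_\mathsf{H}(M)$, this forces $S_\alpha(\kappa)_\mathsf{H}=0$. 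This is the only step that genuinely uses positivity of $g$ on horizontals.
\item Consequently $S_\alpha(X)_\mathsf{H} = S_\alpha(X_\mathsf{H})_\mathsf{H} = \Omega_\alpha(X_\mathsf{H})$, which gives the stated form with the vertical part absorbed into $\Phi$.
\item Restricting the displayed identity to horizontal $X,Y$ yields exactly the $g$-skewsymmetry of $\Omega_\alpha$ on $\Vect_\mathsf{H}(M)$, which is item iii).
\end{enumerate}

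The main subtlety is the extension of $D$ to vector fields, where the sign convention $i_X\alpha=-\alpha(X)$ must be tracked. One has to check that $D^0$, defined on one-forms by duality from $\nabla_{P^\#(\alpha)}$ on vector fields, is indeed a Vaisman connection, and that $S$ is tensorial. This is routine: the Leibniz terms $P^\#(\alpha)f$ cancel identically. With this done, the decomposition $D_\alpha X = \nabla_{P^\#(\alpha)}X - \Phi_\alpha(X)\,\kappa - \Omega_\alpha(X_\mathsf{H})$ follows.
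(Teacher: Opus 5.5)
Your proposal is correct and follows essentially the same route as the paper. The paper also fixes a metric-compatible affine connection $\nabla$, sets $A_\alpha = \nabla_{P^\#(\alpha)} - D_\alpha$, uses metric compatibility to get $g(A_\alpha X, Y) + g(X, A_\alpha Y) = 0$, and splits via the clock form: vertical vectors are sent to vertical vectors, and the horizontal block is $g$-skewsymmetric. You additionally make explicit two points the paper leaves implicit: the $C^\infty(M)$-linearity of the difference tensor, and the fact that it is $\ker(g) = \Vect_\sV(M)$ (nondegeneracy of $g$ on horizontals) that forces $S_\alpha(\kappa)$ to be vertical.
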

\begin{proof}
From Definition \ref{def:ContraMetandPoissonConn}
\begin{align*}
\big (D_\alpha g\big)(X,Y) &= P^\#(\alpha)\big(g(X,Y) \big) - g(D_\alpha X, Y) - g(X, D_\alpha Y)\\
& = g(\nabla_{P^\#(\alpha)}X,Y) + g(X, \nabla_{P^\#(\alpha)}Y) - g(D_\alpha X, Y) - g(X, D_\alpha Y)\\
& = g\big( (\nabla_{P^\#(\alpha)}- D_\alpha) X , Y  \big) + g \big(X, (\nabla_{P^\#(\alpha)}- D_\alpha)Y \big)\,,
\end{align*}
where we have used  \cite[Corollary 2.58]{Bruce:2026a} which established the existence of metric compatible affine connections on Carrollian manifolds.  Setting $A_\alpha = \nabla_{P^\#(\alpha)} - D_\alpha$, we have the condition
$$g(A_\alpha X,Y) + g(X, A_\alpha Y) =0\,,$$
for $D$ to be metric compatible.  That is, the operator $A_\alpha$ is skewsymmetric with respect to $g$. Exploring the degrees of freedom we observe that
\begin{enumerate}[i)]
\item If $X = X_\mathsf{V}$ and $Y = Y_\mathsf{V}$, then the skewsymmetric condition is vacuously satisfied.
\item If $X = X_\mathsf{V}$ and $Y_\mathsf{H}$, then $g(A_\alpha X_\mathsf{V}, Y_\mathsf{H}) =0$, and thus, $A_\alpha X_\mathsf{V} \in \Vect_\mathsf{V}(M)$. 
\item If $X = X_\mathsf{H}$ and $Y = Y_\mathsf{H}$,  then $g(A_\alpha X_\mathsf{H},Y_\mathsf{H}) + g(X_\mathsf{H}, A_\alpha Y_\mathsf{H}) =0$, and thus $A_\alpha$ is a skewsymmetric operator on $(\Vect_\mathsf{H}(M), g)$.
\end{enumerate}
Thus, we have $A_\alpha X = \Phi_\alpha (X)\kappa + \Omega_\alpha(X_\mathsf{H})$, where $\Phi_\alpha(X) \in C^\infty(M)$ is unconstrained, and $\Omega_\alpha : \Vect_\mathsf{H}(M) \rightarrow \Vect_\mathsf{H}(M)$  is skewsymmetric with respect to $g$. From the definition of $A_\alpha$, we have $D_\alpha X = \nabla_{P^\#(\alpha)}X - \Phi_\alpha(X)\, \kappa - \Omega_\alpha(X_\sH)$, as required.
\end{proof}
We can then present any metric-compatible Viasman connection as
\begin{equation}\label{eqn:MetComContCon}
D_\alpha \beta  = \nabla_{P^\#(\alpha)}\beta + \beta(\kappa)\Phi_\alpha + B_\alpha^*\beta \,,
\end{equation}
where  $B_\alpha := \Omega_\alpha \circ \mathrm{p}_\mathsf{H} : \Vect(M) \rightarrow \Vect(M)$, with $\mathrm{p}_\mathsf{H} : \Vect(M) \rightarrow \Vect_\mathsf{H}(M)$ being the canonical projection to horizontal vector fields.  Note $\textrm{Im}(B_\alpha ) \subset \Vect_\mathsf{H}(M)$. The dual operator 
$$B_\alpha^* : \Omega^1(M) \rightarrow \Omega^1(M)\,,$$
is defined as $B_\alpha^*\beta :=  \beta \circ B_\alpha$.
\begin{theorem}\label{the:MetClocCon}
Let $(M, g, \tau, P)$ be a Carrollian--Poisson bundle, then Vaisman connections that are both metric-compatible and  clock-compatible always exists. 
\end{theorem}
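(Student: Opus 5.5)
We start from Lemma \ref{lem:MetComCont} and the dual form \eqref{eqn:MetComContCon}. Fix a metric-compatible affine connection $\nabla$, which exists by \cite[Corollary 2.58]{Bruce:2026a}. Every metric-compatible Vaisman connection then has the form
$$D_\alpha \beta = \nabla_{P^\#(\alpha)}\beta + \beta(\kappa)\,\Phi_\alpha + B^*_\alpha \beta,$$
with $\Phi$ a free $C^\infty(M)$-linear one-form-valued term and $\Omega$ a $g$-skew horizontal endomorphism. Clock-compatibility should then reduce to one algebraic condition on $\Phi$, and the plan is to solve it. Concretely, I take $\Omega=0$ and choose $\Phi$ so that $D_\alpha\tau=0$.

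First compute $D_\alpha\tau$. Since $B_\alpha$ takes values in $\Vect_\mathsf{H}(M)=\ker(\tau)$, we get $B^*_\alpha\tau=\tau\circ B_\alpha=0$. With the paper's conventions $\tau(\kappa)=1$, so
$$D_\alpha\tau = \nabla_{P^\#(\alpha)}\tau + \Phi_\alpha.$$
Clock-compatibility is therefore equivalent to
$$\Phi_\alpha := -\nabla_{P^\#(\alpha)}\tau .$$
This is $C^\infty(M)$-linear in $\alpha$, because $\nabla$ is tensorial in its lower slot and $P^\#$ is $C^\infty(M)$-linear. It is a valid choice of $\Phi$, since Lemma \ref{lem:MetComCont} places no constraint on $\Phi$.

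Next, check that the resulting
$$D_\alpha\beta = \nabla_{P^\#(\alpha)}\beta - \beta(\kappa)\,\nabla_{P^\#(\alpha)}\tau$$
is a Vaisman connection. $C^\infty(M)$-linearity in $\alpha$ is clear. For the Leibniz rule in $\beta$, the correction term is $C^\infty(M)$-linear in $\beta$, so only $\nabla_{P^\#(\alpha)}$ contributes $P^\#(\alpha)f\,\beta$. Metric compatibility then follows from Lemma \ref{lem:MetComCont}, read in the converse direction. On vector fields the connection is $D_\alpha X=\nabla_{P^\#(\alpha)}X-\Phi_\alpha(X)\kappa$, and the correction $A_\alpha X=\Phi_\alpha(X)\kappa$ is vertical. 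Hence $g(A_\alpha X,Y)+g(X,A_\alpha Y)=0$ because $\kappa\in\ker(g)$, and $Dg=0$ follows from $\nabla g=0$.

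As a consistency check, $D_\alpha\kappa$ should be vertical by Lemma \ref{lem:DPres}, and indeed $\tau(D_\alpha\kappa)=P^\#(\alpha)(\tau(\kappa))-(D_\alpha\tau)(\kappa)=0$. The main obstacle I expect is careful bookkeeping of the dualisation conventions between the vector-field form and the form \eqref{eqn:MetComContCon}, including the sign conventions for $\beta(D_\alpha X)$, so that the sign of $\Phi$ is consistent. Either way, there is always a unique $\Phi$ cancelling $\nabla_{P^\#(\alpha)}\tau$, and non-uniqueness comes from the freedom in $\nabla$ and $\Omega$.
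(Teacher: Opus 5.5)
Your proposal is correct and follows the paper's proof essentially step for step. You fix a metric-compatible $\nabla$ and observe that $D_\alpha\tau=\nabla_{P^\#(\alpha)}\tau+\Phi_\alpha$ because $\tau(\kappa)=1$ and $B^*_\alpha\tau=0$, then set $\Phi_\alpha:=-\nabla_{P^\#(\alpha)}\tau$; beyond the paper, you take $\Omega=0$ and check the Vaisman axioms and $Dg=0$ directly. One small slip in your side remark: $\tau(D_\alpha\kappa)=0$ shows that $D_\alpha\kappa$ is horizontal, not vertical, and together with the verticality from Lemma \ref{lem:DPres} it gives $D_\alpha\kappa=0$; this does not affect the proof.
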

\begin{proof}
From Lemma \ref{lem:MetComCont} (and \eqref{eqn:MetComContCon}) we observe that, for any metric-compatible Vaisman connection 
$$D_\alpha \tau =  \nabla_{P^\#(\alpha)}\tau + \tau(\kappa)\, \Phi_\alpha + B_\alpha^* \tau =  \nabla_{P^\#(\alpha)}\tau +  \Phi_\alpha\,.$$
Thus, if $D$ is to be clock compatible then $\Phi_\alpha = - \nabla_{P^\#(\alpha)}\tau$ is required.  We may then construct such a Vaisman connection by selecting a metric compatible affine connection $\nabla$ (which always can be done) and  defining $\Phi_\alpha := - \nabla_{P^\#(\alpha)}\tau$ and keeping $\Omega$ arbitrary. Thus, metric-compatible and clock-compatible Vaisman connections can always be constructed.
\end{proof}
We then deduce that any metric-compatible and clock-compatible connection has as decomposition of the form 
\begin{equation}
D_\alpha \beta  = \nabla_{P^\#(\alpha)}\beta - \beta(\kappa)\,\nabla_{P^\#(\alpha)}\tau + B_\alpha^*\beta \,,
\end{equation}
with respect to a metric-compatible affine connection.\par
The above analysis demonstrates that on any Carrollian--Poisson bundle, selecting a metric-compatible and clock-compatible Vaisman connection is always possible. That is, any Carrollian--Poisson bundle can be rendered a strong Carrollian--Poisson bundle, albeit non-canonically.  
\begin{example}\label{exa:FutNullInftD}
Continuing Example \ref{exa:FutNullInf}, consider future null infinity  $\mathscr{I}^+ \cong \mathbb{R} \times S^{d-2}$, which is  Carrollian--Poisson bundle $(\mathscr{I}^+, g= \pi^* h, \tau, P)$, where $h$ is the standard spherical metric on $S^{d-2}$, the clock form  $\tau$ is arbitrary, and $P =  \kappa \wedge L^\mathsf{H}$ is built from the Cartan Lie subalgebra of Killing vector fields.  Let $\nabla^0$ be the Levi-Civita connection on $S^{d-2}$ associated with the metric $h$. As $\sH M \cong \pi^*(\sT S^{d-2}) $, the Levi-Civita connection can be pullbacked to construct a horizontal connection on $\Vect(\mathscr{I}^+)$, which we will denote as $\nabla^\sH$.  Note that by construction $\nabla^\sH_\kappa =0$.  We then have the `lifted' affine connection defined as 
$$\nabla_X Y :=  \nabla^\sH_X Y_H + X(\tau(Y))\, \kappa\,.$$ 
As $\kappa \in \ker(g)$, only the horizontal projection of $\nabla_X Y$ important for metric compatibility. Because $\nabla^\sH$ is the pullback of the Levi-Civita connection and $g = \pi^* h$, we automatically have metric-compatibility. Taking the dual affine connection, and then defining 
$$D_\alpha \beta :=  \nabla_{P^\#(\alpha)}\beta =\nabla^\sH_{P^\#(\alpha)}\beta_\sH + P^\#(\alpha)(\beta(\kappa))\, \tau\,, $$
we obtain a (canonical) Carroll--Vaisman connection.  One can directly check clock-compatibility using the fact that $\tau$ is vertical and that $\tau(\kappa)=1$.  Note we still have freedom in choosing $B_\alpha$, and here we simply set it to the zero map.  Thus, we have constructed the strong  Carrollian--Poisson bundle $(\mathscr{I}^+, g, \tau, P, D)$.
\end{example}
\begin{remark}
The mechanism of lifting a Levi-Civita connection as demonstrated in Example \ref{exa:FutNullInftD}, directly generalises to Carrollian--Poisson bundles where the metric is the pullback of a Riemannian metric on the base manifold $\Sigma$. That is, whenever $g$ is equivaritant.  Thus, the other examples in Subsection \ref{subsec:MoreExamples} may similarly be equipped with a Carroll--Vaisman connection.
\end{remark}
%
%
\subsection{Geometric Kinematics on Carrollian--Poisson Bundles}\label{subsec:GeoKin}
Let $I = [s_0, s_1] \subset \R$ be a closed interval, and let $\pi^{\sT^*M}_M : \sT^*M \rightarrow M$ denote the projection to $M$.  A (smooth) curve $c : I \rightarrow  \sT^* M$ over $\gamma := \pi^{\sT^*M}_M \circ c : I \rightarrow M$, is said to be an \emph{admissible curve} if  $P^\#\big(c(s)\big) = \dot{\gamma}(s)$, for all $s \in I$.  We can further project the curve using $\pi : M \rightarrow \Sigma$, and so we define the \emph{spatial curve} as $\widehat{\gamma}:= \pi \circ \gamma : I \rightarrow \Sigma$. \par 
Note that as the velocity of an admissible curve $\dot{\gamma}(s) = P^\#(c(s))$ for all $s \in I$  is tangent to a single symplectic leaf,    $\gamma(s)$ lies in the single symplectic leaf that contains $m_0 := \gamma(s_0)\in M$. 
\begin{definition}
Let $(M, g , \tau, P , D)$ be a strong Carrollian--Poisson bundle. An admissible curve $c :I \rightarrow \sT^* M$ is said to be a  \emph{contravariant geodesic} if $D_{c(s)} c(s) =0$ for all $s \in I$.  A contravariant geodesic is said to be a
\begin{enumerate}[i)]
 \item \emph{Carrollian contravariant geodesic} if the underlying spatial  curve is constant while the temporal  curve is non-constant;
\item \emph{Tachyonic contravariant geodesic} if the underlying spatial curve is non-constant; and
\item \emph{Singular contravariant geodesic} if the underlying spatial curve and temporal curve are constant.
\end{enumerate}
\end{definition}
\begin{remark}
The notions of Carrollian, tachyonic, and singular admissible curves are similarly defined. 
\end{remark}
The notion of a Carrollian contravariant geodesic captures the idea of `frozen motion', that is, massive particles being constrained to only move in the temporal direction and spatially static.  A tachyon can move spatially, and so the nomenclature tachyonic contravariant geodesics.  Singular contravariant geodesics are single points. For example, we can picture a particle completely frozen on a zero-dimensional symplectic leaf. \par
To set notation with the initial conditions of an admissible curve, we set $p := c(s_0) \in \sT^*_{m}M$, with $m := \gamma(s_0) \in M$, and $v := P^\#_{m}(p) \in \sV_m M \bigoplus \sH_m M$  (using the clock form).
\begin{proposition}\label{prop:IniConMin}
Let $(M, g, \tau, P, D)$ be a strong  minimal Carrollian--Poisson bundle, i.e, we have $P_\sH =0$. A contravariant geodesic $c: I \rightarrow \sT^* M$ over $\gamma := \pi^{\sT^*M}_M \circ c : I \rightarrow M$ is a Carrollian contravariant geodesic if and only if  $v:= P^\#_{m}(p)  \in \sV_m M$ and non-zero, and $P_m \neq 0$.
\end{proposition}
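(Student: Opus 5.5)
We prove the two directions separately. Throughout we use $P = \kappa \wedge E_\sH$ (minimal, so $P_\sH = 0$). Then for any covector $p$,
$$P^\#(p) = p(\kappa)\, E_\sH - p(E_\sH)\, \kappa,$$
with the sign convention of Example \ref{exa:CarrPlane}. We also use that $D$ preserves $\Vect_\sV(M)$ and $\Vect_\sH(M)$ by Lemma \ref{lem:DPres}.

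\emph{Forward direction.} Suppose $c$ is a Carrollian contravariant geodesic. Then $\widehat{\gamma} = \pi\circ\gamma$ is constant, so $\pi_*\dot\gamma(s) = 0$ and $\dot\gamma(s) \in \sV_{\gamma(s)}M$ for all $s$. At $s = s_0$ this gives $v = \dot\gamma(s_0) \in \sV_m M$.

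Non-vanishing of $v$ needs more care, since a non-constant curve may have zero velocity at an isolated parameter. The plan is to show that $\dot\gamma(s) = f(s)\,\kappa$ with $f$ satisfying a first-order linear ODE, so $f \equiv 0$ whenever $f(s_0) = 0$.

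First, verticality of $\dot\gamma$ forces $c(s)(\kappa)\, E_\sH = 0$ and $\dot\gamma = -c(E_\sH)\,\kappa$. So $f = -c(E_\sH)$, and we need the evolution of $c(E_\sH)$. The geodesic equation $D_c c = 0$ together with the extension of $D$ to vector fields gives
$$\tfrac{d}{ds}\big(c(E_\sH)\big) = \dot\gamma\big(c(E_\sH)\big) = c\big(D_c E_\sH\big).$$
By Lemma \ref{lem:DPres}, $D_c E_\sH$ is horizontal. We then expand it in a local frame adapted to $E_\sH$ (away from the zeros of $E_\sH$), so that the right-hand side is linear in the components of $c$.

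The main obstacle is closing this ODE system. The horizontal components of $c$ other than the $E_\sH$-direction may feed into the equation. The first thing to try is the parallel transport picture: $D_c c = 0$ makes $c$ a $D$-parallel covector field along $\gamma$. Parallel transport is linear along the curve, so if $c$ and its relevant components vanish at $s_0$, they vanish identically. If that fails, we fall back on Picard--Lindelöf uniqueness for the geodesic ODE in local coordinates, using the constant curve as a competing solution. Once $v \neq 0$ is established, $P_m \neq 0$ is immediate, because $v = P^\#_m(p)$.

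\emph{Converse.} Suppose $P_m \neq 0$ and $0 \neq v \in \sV_m M$. Then $p(\kappa)\, E_{\sH,m} = 0$. Since $P_m \neq 0$ forces $E_{\sH,m} \neq 0$, we get $p(\kappa) = 0$, so $p$ is horizontal. By Lemma \ref{lem:SymFol}, the fibre through $m$ lies in the two-dimensional leaf $\Span\{\kappa, E_\sH\}$.

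We then show that $c(\kappa)$ satisfies a linear ODE. Indeed,
$$\tfrac{d}{ds}\, c(\kappa) = c(D_c \kappa),$$
and $D_c\kappa = \phi\,\kappa$ for some function $\phi$ by Lemma \ref{lem:DPres}. Hence $\tfrac{d}{ds}\, c(\kappa) = \phi\, c(\kappa)$ with initial value $0$, so $c(\kappa) \equiv 0$ by uniqueness. Consequently $\dot\gamma$ is always vertical, and $\widehat\gamma$ is constant.

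Finally, $\gamma$ is non-constant because $\dot\gamma(s_0) = v \neq 0$. Therefore $c$ is a Carrollian contravariant geodesic.
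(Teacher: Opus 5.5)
\textbf{Verdict: genuine gap, in the forward direction ($v\neq 0$), and it cannot be closed.} Your converse is correct and is the paper's argument verbatim: the linear ODE $\frac{\rmd}{\rmd s}\langle c,\kappa\rangle=\psi\,\langle c,\kappa\rangle$ with zero initial value.

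The gap is the step you flag yourself. Neither of your fallbacks works, and the implication is in fact false in general. Write $\dot\gamma=f\kappa$ with $f=-c(E_\sH)$. Using Lemma \ref{lem:MetComCont} together with $c(\kappa)=0$, you get
\[ \dot f=-c(\nabla_\kappa E_\sH)\,f+c\big(\Omega_c(E_\sH)\big)\,. \]
The last term comes from the unconstrained $g$-skew part $\Omega$ (i.e.\ $B_\alpha$) of a Carroll--Vaisman connection. It involves exactly the components of $c$ that are $g$-orthogonal to $E_\sH$, so it need not vanish when $f$ does.

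Here is a concrete counterexample.
\begin{itemize}
\item Take $M=\R\times\R^2$ with coordinates $(t,x,y)$, $g=\rmd x\otimes\rmd x+\rmd y\otimes\rmd y$, $\tau=\rmd t$ and $P=\partial_t\wedge\partial_x$.
\item Take $D_\alpha X=\nabla^0_{P^\#(\alpha)}X-\alpha(\partial_y)\,JX_\sH$, where $\nabla^0$ is the flat coordinate connection and $J\partial_x=\partial_y$, $J\partial_y=-\partial_x$. This makes a strong minimal Carrollian--Poisson bundle.
\item The geodesic system is $\dot c_t=0$, $\dot c_x=-c_y^2$, $\dot c_y=c_xc_y$, $\dot t=-c_x$, $\dot x=c_t$, $\dot y=0$.
\item For $p=\rmd y$ the solution is $c(s)=-\tanh(s-s_0)\,\rmd x+\rmd y/\cosh(s-s_0)$ over $\gamma(s)=(t_0+\ln\cosh(s-s_0),x_0,y_0)$.
\end{itemize}
This is a Carrollian contravariant geodesic, yet $v=P^\#_m(\rmd y)=0$. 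The constant curve is not a competing solution, since $D_{\rmd y}\rmd y=\rmd x\neq 0$. Parallel transport also fails, because it rotates $c$ off the hyperplane $\{c(E_\sH)=0\}$.

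The paper's own proof does not do better at this point. It merely asserts that $v$ ``must be non-zero for the temporal component of the geodesic to be non-constant'', which is the non sequitur you correctly spotted.

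What survives of the forward direction is $v\in\sV_mM$ and $P_m\neq 0$. However, you must derive $P_m\neq 0$ without going through $v\neq 0$. As in the paper, an admissible curve stays in the symplectic leaf of $m$, and that leaf is $\{m\}$ if $P_m=0$. Locally, $\gamma$ is an integral curve of the time-dependent vector field $P^\#\big(c_i(s)\,\rmd x^i\big)$, which vanishes at $m$, so uniqueness forces $\gamma\equiv m$.

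The full statement does hold under an extra hypothesis such as $B_\alpha=0$. This is automatic when $\dim\Sigma=1$, since horizontal skew endomorphisms then vanish. In that case your equation reduces to $\dot f=-c(\nabla_\kappa E_\sH)\,f$, so $f(s_0)=0$ gives $f\equiv 0$ near $s_0$. A connectedness argument then makes $\gamma$ constant, which is exactly the closing step your plan was aiming for.
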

\begin{proof}
Let $c(s)$ be a Carrollian contravariant geodesic.  By definition $\widehat{\gamma}(s) = (\pi \circ \gamma)(s)$ is constant, and so $\dot{\widehat{\gamma}}(s)= \rmd\pi (\dot{\gamma}(s))=0$. As $\ker(\rmd \pi_{\gamma(s)}) = \sV_{\gamma(s)}M$, we observe that $\dot{\gamma}(s) \in \sV_{\gamma(s)}M$ for all $s \in I$.  Thus, if $c(s)$ is a Carrollian contravariant geodesic, then $v:= \dot{\gamma}(s_0) =P^\#_m (c(s_0)) \in \sV_m M$, which must be non-zero for the temporal component of the geodesic to be non-constant. If $P_m=0$, then the symplectic leaf  containing $m$ is the  single point $m$ (or equivalently $v =0$), and thus we cannot have a Carrollian curve, i.e., we have a contradiction and so $P_m \neq 0$. \par 
Assume $v \in \sV_m M$ at $m = \gamma(s_0)$ is non-zero, and   $P_m \neq 0$. The underlying spatial curve is constant if and only if the horizontal component $\gamma_\sH(s)$ of the velocity $\dot{\gamma}(s) = P_{\gamma(s)}^\#(c(s))$ vanishes for all $s \in I$. Consider the pairing $\langle c(s), \kappa\rangle$, and in particular, differentiating  this along the geodesic, we obtain 
\begin{align*}
\frac{\rmd}{\rmd s} \langle c(s), \kappa  \rangle &= \langle D_{c(s)}c(s), \kappa \rangle + \langle c(s) , D_{c(s)}\kappa\rangle\\
& = \psi(s)\, \langle c(s), \kappa\rangle\,, 
\end{align*}
where we have used $D_{c(s)} c(s)=0$ and  Lemma \ref{lem:DPres} to write $D_{c(s)} \kappa = \psi(s)\, \kappa$. As $v \in \sV_m M$ (at $m = \gamma(s_0)$), it must be the case that $c(s_0) \in \sH^*_m M$ due to the minimal condition and $P_m \neq 0$. Thus, we have a first-order differential equation subject to the initial condition $\langle c(s_0), \kappa \rangle=0$.  Then, via the Picard--Lindelöf theorem, we know that $\langle c(s), \kappa \rangle=0$ for all $s \in I$. This implies that $c(s) \in \sH^*_{\gamma(s)}M$ for all $s \in I$. Thus, $P_{\gamma(s)}^\#(c(s)) \in \sV_{\gamma(s)} M$ for all $s \in I$. Therefore, $\gamma_\sH(s) =0$ for all $s \in I$.
 \end{proof}
Proposition \ref{prop:IniConMin} establishes that on strong minimal Carrollian--Poisson bundles, Carrollian contravariant geodesics can be formulated as an initial value problem.  However, it is important to stress that this is not the case for non-minimal Carrollian--Poisson bundles.  Specifically, a Carrollian contravariant geodesic must satisfy the geometric constraint
\begin{equation}
c(s) \in \ker\big(P^\#_{\sH, \gamma(s)}\big)\,,
\end{equation} 
for all $s \in I$. It is not the case that specifying $c(s_0) \in \ker\big(P^\#_{\sH,m}\big)$ implies the geometric constraint, and so we do not have an initial value problem.\par 
In general, once we have selected a reference metric affine connection, the contravariant geodesic equation can be cast into the form 
\begin{subequations}
\begin{equation}
\nabla_{\dot{\gamma}(s)}c(s) = \langle c(s) , \kappa \rangle \nabla_{\dot{\gamma}(s)} \tau - B^*_{c(s)} c(s)\,.  
\end{equation}
If the Carrollian--Poisson bundle is minimal, then  Proposition \ref{prop:IniConMin} establishes that for Carrollian contravariant geodesics $\langle c(s), \kappa\rangle =0$. That is, $c(s) = c_\sH(s)$, i.e., the momentum is horizontal.  Thus, the geodesic equation  for Carrollian geodesics simplifies to
\begin{equation}
\nabla_{\dot{\gamma}(s)}c_\sH(s) =  - B^*_{c_\sH(s)} c_\sH(s)\,.
\end{equation}
\end{subequations}
\begin{example}
Continuing Example \ref{exa:FutNullInftD}, we equip $(\mathscr{I}^+, g , \tau, P)$, with the Carroll--Vaisman connection $D_\alpha \beta :=  \nabla^\sH_{P^\#(\alpha)}\beta_\sH + P^\#(\alpha)(\beta(\kappa))\, \tau$. For any admissible curve $c : I \rightarrow \sT^* \mathscr{I}^+$, the contravariant geodesic equation can be written as
$$\nabla^\sH_{\dot{\gamma}(s)} c_\sH(s) = - \dot{\gamma}(s)\langle  c(s), \kappa\rangle\, \tau\,.$$
Let us consider Carrollian contravariant geodesics, and find local solutions using coordinates $(t, \theta^i)$, where $\theta^i$ are  angular coordinates.  As we have no spatial motion, we select a point (not one of the poles) and set $\theta^i(s) =\theta^i_0$. Via Proposition  \ref{prop:IniConMin}, we know that $ \langle c(s), \kappa \rangle =0 $ for all $s \in I$.  Thus, the momentum is horizontal; we set $c(s) := c_\sH(s) = \rmd \theta^i c_i(s)$.  The contravariant geodesic equation simplifies to
$$\nabla^\sH_{\dot{\gamma}(s)} c_\sH(s) =0\,.$$
This implies that $c_i(s)$ do not change along the $\R$-fibres, thus we set $c_i(s) = p_i$ understood as the spatial momentum.  Then using the anchor, we have $P^\#(\rmd \theta^i p_i) = - (L^\sH)^i p_i\, \kappa$, and so $\dot{t}(s) = -  (L^\sH)^i(\theta_0)p_i$. Finally, the local Carrollian geodesic is given by 
$$t(s) = t_0 - \big ((L^\sH)^i(\theta_0)p_i  \big)\, s\, , \qquad \theta^i(s) = \theta^i_0\,.$$
\end{example}
\begin{example}
Continuing Example \ref{exa:BTZ}, consider the BTZ horizon $\mathcal{H}^+ \Diffeo \R \times S^2$, which we equip with coordinates $(v, \varphi)$. We will, for simplicity, choose the clock form $\tau = \rmd v$. The Poisson structure is then given by $P = r_+^{-1}\, \partial_v \wedge \partial_\varphi$. Similarly, to  the previous example, we employ the lift of Levi-Civita connection on $S^2$ associated with the (rescaled) flat metric; the Christoffel symbols are zero, simplifying the expressions. We parameterise admissible curves as $c(s) = \rmd v \,p_v(s) + \rmd \vartheta \,p_\varphi(s)$. Thus the velocity of a curve 
$$\dot{v}(s) = - p_\varphi(s) r_+^{-1}\,, \qquad \dot{\varphi}(s) = p_v(s) r_+^{-1}\,.$$
The contravariant geodesic equation is particularly simple 
$$\rmd v \, \dot{p}_v(s) + \rmd \varphi \,\dot{p}_\varphi (s) =0\,,$$
and thus, $p_v(s) = p_v^0$ and $p_\varphi(s) = p_\varphi^0$ (constant). Integrating the velocity produces a paramaterisation of contravariant geodesics as
$$v(s) = v_0 - (p_\varphi^0 r_+^{-1})\, s \,, \qquad \varphi(s) = \varphi_0 + (p_v^0 r_+^{-1})\, s\,.$$  
\begin{description}
\item[Carrollian] ($p_v^0=0$) \\
$$v(s) = v_0 - (p_\varphi^0 r_+^{-1})\, s \,, \qquad \varphi(s) = \varphi_0 \,,$$
\item[Tachyonic] ($p_v^0 \neq 0$)\\
\begin{enumerate}[i)]
\item \emph{Circular} ($p_\varphi^0 =0$) 
$$v(s) = v_0  \,, \qquad \varphi(s) = \varphi_0 + (p_v^0 r_+^{-1})\, s\,.$$
\item \emph{Helical} ($p_\varphi^0 \neq 0$)
$$v(\varphi) = v_0 - \left(\frac{p_\varphi^0}{p_v^0} \,(\varphi - \varphi_0)\right)\,.$$
\end{enumerate}
\end{description}
\end{example}
%
%
\section{Concluding Remarks}\label{sec:ConMark}
We have proposed the notion of a Carrollian--Poisson bundle as a Carrollian bundle equipped with a compatible Poisson structure and presented a study of their fundamental properties. The motivation for this structure is to find a quasi-classical framework to study aspects of `Carrollian noncommutative spaces'. The compatibility condition $\ker(g) \subset \mathrm{Im}(P^\#)$ translates to the existence of at least  one-form  $\alpha$ such that $P^\#(\alpha) = \kappa$, where $\kappa$ is the Carroll vector field.  This condition is weaker than insisting that the Carroll vector field be (locally or globally) Hamiltonian, i.e., asserting that $\alpha$ must be closed.   However, if the Poisson structure is independent of time, more mathematically put, equivariant under the principal $\R$-action, then $\kappa$ is locally Hamiltonian.  Thus, we expect physically relevant Carrollian--Poisson bundles are to be Poisson-stationary, i.e., $\mathcal{L}_\kappa P=0$.  We established the global decomposition $P = \kappa \wedge E_\mathsf{H} + P_\mathsf{H}$, and that with $\dim M \leq 3$, $P_\mathsf{H}=0$ automatically. The existence of Vaisman connections that are both  metric-compatible and clock-compatible on any Carrollian--Poisson bundle was proven. Once we have selected a Carrollian--Vaisman connection, as we term them, the geometric kinematics of test particles via geodesics can be studied. In particular, we have shown that Carrollian contravariant geodesics on strong minimal Carrollian--Poisson bundles can be posed as an initial value problem.   We therefore consider strong (minimal) Carrollian-Poisson bundles to represent an effective low-energy geometric model of a quantum Carrollian gravity background.  
\medskip 

\noindent \textbf{Future Directions:}\
\begin{enumerate}[i)]
\item Developing further examples of (strong) Carrollian--Poisson bundles, including those motivated by condensed matter physics and fluid dynamics is highly desirable (see \cite[Chapters 11--14]{Bagchi:2025} and references therein for a review of Carrollian physics in hydrodynamics and condensed matter physics). As well as aiding in the mathematical understanding of Poisson structures in Carrollian geometry, additional examples will strengthen the potential applications in physics. For example, contravariant geodesics that allow spatial motion are a natural feature of the strong Carrollian--Poisson bundles; we refer to such paths as tachyonic contravariant geodesics. While such curves seem very much at odds with the Carrollian limit $c\rightarrow 0$, they may arise when considering collective or emergent transport. In Carrollian hydrodynamics, while fundamental fluid particles are spatially fixed, acoustic and thermal transport currents may follow non-trivial paths. Similarly, in condensed matter systems with flat bands or fracton order, individual excitations are localized, but composite dipoles, for example, may  move freely along tachyonic trajectories.
\item Minimal, Poisson-stationary Carrollian--Poisson bundles $(M, g, \tau, P)$ can always be integrated to  produce a (global) symplectic Lie groupoid $\mathcal{G} \rightrightarrows M$ (the proof of which is outside the scope of the current paper).  In general, there will be Crainic--Fernandes monodromy group obstructions (see \cite{Crainic:2004}). However, even if the Carrollian--Poisson bundle is (globally) integrable, the Carrollian metric $g$ and clock form $\tau$ do not produce a standard Riemannian metric on $\mathcal{G}$, but instead are related to sub-Riemannian Lie groupoids and transverse metric structures. The axiomatic formulation of ``Carrollian--symplectic Lie Groupoids'' and their properties awaits exploration. 
\item While our motivation for Carrollian--Poisson bundles is deeply rooted in deformation quantisation, we have not pursued that programme at all here, and so many questions are open.  For instance, under what conditions does a Carrollian--Poisson bundle admit a formal star-product deformation quantisation such that the Carroll vector field $\kappa$ remains an inner derivation at all orders in $\hbar$? Furthermore, how does the compatibility condition $\ker_m(g) \subset \text{Im}(P^\#_m)$ constrain the representation theory of the corresponding quantised algebra $\mathcal{A}_\hbar$?
 \end{enumerate}
%
%

%
%

\end{document}